\numberwithin{equation}{section}
\newtheorem{theorem}{Theorem}[section]
\newtheorem{lemma}[theorem]{Lemma}
\newtheorem{remark}[theorem]{Remark}
\newtheorem{example}[theorem]{Example}
\newtheorem{proposition}[theorem]{Proposition}
\newtheorem{defn}[theorem]{Definition}
\newtheorem{corollary}[theorem]{Corollary}
\newcommand{\R}{\mathbb{R}}
\newcommand{\C}{\mathbb{C}}
\newcommand{\Z}{\mathbb{Z}}
\newcommand{\N}{\mathbb{N}}
\newcommand{\be}{\begin{equation}}
\newcommand{\ee}{\end{equation}}
\newcommand{\gammatildeepsilonbeta}{\tilde{\gamma}_{\epsilon,\beta}}
\newcommand{\LcapL}{L^1\cap L^3}
\newcommand{\cala}{{\mathcal{A}}}
\newcommand{\calR}{{\mathcal{R}}}
\newcommand{\calE}{{\mathcal{E}}}
\newcommand{\calM}{{\mathcal{M}}}
\newcommand{\bthe}{\begin{theorem}}
\newcommand{\ethe}{\end{theorem}}
\newcommand{\ben}{\begin{enumerate}}
\newcommand{\een}{\end{enumerate}}
\newcommand{\beq}{\begin{equation}}
\newcommand{\eeq}{\end{equation}}
\newcommand{\ble}{\begin{lemma}}
\newcommand{\ele}{\end{lemma}}
\newcommand{\bde}{\begin{definition}}
\newcommand{\ede}{\end{definition}}
\newcommand{\bco}{\begin{corollary}}
\newcommand{\eco}{\end{corollary}}
\newcommand{\bpr}{\begin{proposition}}
\newcommand{\epr}{\end{proposition}}
\newcommand{\bexam}{\begin{example}\rm}
\newcommand{\eexam}{\halmos\end{example}}
\newcommand{\beao}{\begin{eqnarray*}}
\newcommand{\eeao}{\end{eqnarray*}\noindent}
\newcommand{\beam}{\begin{eqnarray}}
\newcommand{\eeam}{\end{eqnarray}\noindent}
\newcommand{\barr}{\begin{array}}
\newcommand{\earr}{\end{array}}
\newcommand{\bproof}{\begin{proof}}
\newcommand{\eproof}{\end{proof}}
\newcommand{\la}{{\lambda}}
\newcommand{\ga}{{\gamma}}
\newcommand{\Ga}{{\Gamma}}
\newcommand{\eps}{\varepsilon}
\def\bbr{{\Bbb R}}
\def\bbn{{\Bbb N}}
\def\calp{{\mathcal{P}}}
\def\calt{{\mathcal{T}}}
\def\calR{{\mathcal{R}}}
\newcommand{\halmos}{\quad\hfill\mbox{$\Box$}}
\newcommand{\EEE}{\color{black}}
\begin{document}

\begin{center}
{\Huge Density functional theory and\\
 optimal transportation with Coulomb cost}  \\

\normalsize
\vspace{0.4in}

Codina Cotar, Gero Friesecke and Claudia Kl\"uppelberg  \\
Department of Mathematics  \\
Technische Universit\"at M\"unchen  \\
cotar@ma.tum.de, gf@ma.tum.de, cklu@ma.tum.de \\[2mm]
\end{center}

\noindent
\normalsize
{\bf Abstract.} We present here novel insight into exchange-correlation functionals in density functional theory,
based on the viewpoint of optimal transport. We show that in the case of two electrons and in the semiclassical limit,
the exact exchange-correlation functional reduces to a very interesting functional of novel form, which depends on an optimal
transport map $T$ associated with a given density $\rho$. 
Since the above limit is strongly correlated, the limit functional
yields insight into electron correlations.
We prove the existence and uniqueness of such an optimal map for any
number of electrons and each $\rho$, and determine the map explicitly in the case when $\rho$ is radially symmetric.

\vspace{0.2in}

\noindent Keywords: density functional theory, exchange-correlation functional, optimal transport

\noindent AMS Subject classification: 49S05, 65K99, 81V55, 82B05, 82C70, 92E99, 35Q40

\tableofcontents

\vspace{0.2in}

\section{Introduction}
The precise modelling of electron correlations continues to constitute
the major obstacle in developing high-accuracy, low-cost methods for electronic structure
computations in molecules and solids. In this article we shed new light on the longstanding problem
of how to accurately incorporate electron correlation into density functional theory (DFT),
by deriving and analyzing the semiclassical limit of the exact Hohenberg-Kohn
functional with the single-particle density $\rho$ held fixed. In this limit, we find that the exact
functional reduces formally to a very interesting functional of novel form which
depends on an {\it optimal transport map} $T$ associated with a given density $\rho$. Our work thereby links
DFT, which is a large and very active research area in physics and chemistry
[PY95, FNM03, Ra09], for the first time to optimal transportation theory, which has
recently become a very active area in mathematics \cite{GM96, Ru96, Villani}.

In optimal transportation theory the goal is to transport ``mass'' from an initial density $\rho_A$
to a target density $\rho_B$ in such a way that the ``cost'' $c(x,y)$ for transporting mass from $x$ to $y$
is minimized. Mathematically, this means that one minimizes a cost functional $\int c(x,y) d\gamma(x,y)$ over
a set of joint measures $\gamma$ (in physics terminology: pair densities) subject to fixed marginals (single-particle densities).
See below for a precise formulation. The main mathematical novelty of the optimal transportation
problem arising from DFT,
\beq \label{firsteq}
   \mbox{Minimize}\; \int_{\R^6} \frac{1}{|x-y|} d\gamma(x,y) \mbox{ subject to equal marginals }\rho,
\eeq
is that the cost, which is given by the Coulomb law
$c(x,y)={1}/{|x-y|}$, {\it decreases} rather than increases with distance and has a {\it singularity} on
the diagonal.

Our goals in this paper are
\\[1mm]
(i) to prove that for any given single-particle density $\rho$, the optimal transportation problem with Coulomb cost possesses
a unique minimizer which is given by an optimal transport map $T_\rho$ associated with $\rho$. (It is well known
that uniqueness is {\it false} for the seemingly simpler cost function $c(x,y)=|x-y|$.)
\\[1mm]
(ii) to derive an explicit formula for the optimal map in the case when $\rho$ is radially symmetric. (Note that in
physics, radial densities arise as atomic ground state densities for many elements such as He, Li, N, Ne,
Na, Mg, Cu.)
\\[1mm]
(iii) to prove that DFT with electron interaction energy given by
the optimal transportation cost $E_{OT}[\rho]$
(defined as the minimum cost in (\ref{firsteq}))
is the semiclassical limit of exact Hohenberg-Kohn DFT in case of two electrons, and establish basic properties
such as that it is a rigorous lower bound to exact DFT for any number of electrons.
\\[1mm]
We do not know whether our semiclassical limit result remains true for a general number of electrons.
As explained in Section 5, this question is related to the representability
problem for two-particle density matrices [CY02].
\\[1mm]
To prove (i) and (ii) we adapt geometric methods as developed in~[GM96], \cite{Ru96}, and~\cite{SK92}, for cost functions that
increase with distance. In our case of decreasing cost functions, one can still geometrically construct a potential
which specifies both in which direction and how far to move the mass of $\rho_A$ which
sits near $x$. To prove (iii) we will need to make modifications to the optimal
transport plan which yields $E_{OT}[\rho]$, since any wave function whose pair density is given by the optimal plan
has infinite kinetic energy. The main technical idea here is a construction to re-instate the original marginals
after smoothing.

The optimal transportation functional $E_{OT}[\rho]$ which emerges as a limit of the Hohenberg-Kohn functional
should be viewed as a natural ``opposite'' of the well known mean field functional
$J[\rho]=\frac12\int_{\R^6} \frac{\rho(x)\, \rho(y)}{|x-y|} dx\, dy$:
it arises in a strongly correlated rather than a de-correlated limit, thereby yielding valuable qualitative
insight into electron correlations. We also believe that $E_{OT}[\rho]$ has a role to play in the design of quantitative
competitors to existing exchange-correlation functionals: it provides an alternative starting point
of novel functional form for designing approximations, and -- just like the mean field functional -- could be incorporated
as an ingredient into hybrid functionals. Basic quantitative issues are addressed in the companion paper [CFK11].

This paper is structured as follows. In Section 2 we discuss density functional theory from a mathematical perspective.
In section~$3$ we introduce optimal transport theory, prove in Theorem~\ref{uniqgeom} the main result of the section,
the uniqueness of the optimal transport map, and establish some of its general properties.
In section~$4$ we give in Theorem \ref{bigdim} an explicit formula for the optimal
map for equal, radially symmetric marginals. In section~5 we compare the optimal transportation cost
$E_{OT}[\rho]$ to the exact Hohenberg-Kohn functional, and show
that it is its semiclassical limit in the case of two electrons (Theorem \ref{limit}), as well as a lower bound
for any number of electrons (Theorem \ref{lowbound}).

\section{Density functional theory}

Density functional theory (DFT) was introduced by Hohenberg, Kohn and Sham in the 1960s in two fundamental
papers [HK64, KS65], as an approximate computational method for solving the many-electron Schr\"odinger
equation whose computational cost remains feasible even for large systems. In this theory,
one only computes the single-particle density instead of the full wave function. In order
to obtain a closed equation, a closure assumption is needed which expresses the pair density in
terms of the single-particle density. A simple ``independence'' ansatz has turned out to be far
too crude in practice. A huge effort has gone into developing
corrections which account for the failure of independence [PY95, FNM03, Ra09].

Our goal in this section is to discuss DFT from a mathematical perspective.

The key quantity DFT aims to predict is the ground state energy $E_0$ of a molecule as
a function of the atomic positions. From this, further properties can be readily extracted,
for instance, in order to determine a molecule's stable equilibrium shapes one minimizes
$E_0$ (locally or globally) over atomic positions.

Starting point for developing DFT models is the ``exact'' (non-relativistic, Born-Oppenheimer)
quantum mechanical ground state energy $E_0^{QM}$. The definition contains some details which may look a bit complicated
to readers not familiar with many-particle quantum theory,
but the basic mathematical structure relevant to developing DFT models is simple. $E_0^{QM}$ is the minimum value of a suitable
energy functional ${\cal E}$ over a suitable class of functions ${\cal A}$.
\subsection{Exact ground state energy}
The detailed definition is as follows.
Consider a molecule with atomic nuclei at positions $R_1,..,R_M\in\R^3$, with charges $Z_1,..,Z_M\in\N$ ($Z=1$ means
hydrogen, $Z=2$ Helium, $Z=3$ Lithium, $Z=4$ Beryllium, $Z=5$ Boron, $Z=6$ Carbon, $Z=7$ Nitrogen,
$Z=8$ Oxygen, and so on), and with $N$ electrons. The energy functional depends on the positions and charges of
the nuclei only through the ensuing Coulomb potential
\be \label{potential}
   v(x) = - \sum_{\alpha=1}^M \frac{Z_\alpha}{|x-R_\alpha|},\quad x\in\R^3.
\ee
For the discussion below, the potential $v$ can more generally be any function in the space $L^{3/2}(\R^3)+L^\infty(\R^3) = \{v_1+v_2\, | \, v_1\in L^{3/2}(\R^3),
\, v_2\in L^\infty(\R^3)\}$. 
The class of functions ${\cal A}$ is given by
\be \label{class}
  {\cal A} = \{ \Psi \in L^2((\R^3\times\Z_2)^N;\C) \, | \, \nabla\Psi \in L^2, \, \Psi \, \mbox{antisymmetric}, \,
                ||\Psi||_{L^2}=1\}.
\ee
(Here antisymmetric means $\Psi(z_{\sigma(1)}, .., z_{\sigma(N)}) = sgn(\sigma)\Psi(z_1,..,z_N)$ for all permutations $\sigma$,
where $z_1,...,z_N\in\R^3\times\Z_2$ are space-spin-coordinates for the $N$ electrons. Spin will not play a big
role in the sequel, but the fact that the functions $\Psi$ depend on all the positions of all the electrons
is important. It leads to the fact that we will have to deal with ``N-point distributions''.) Elements of ${\cal A}$ are
called (N-electron) {\it wave functions}.

The energy functional is given by
\be \label{EQM}
  {\cal E}^{QM}[\Psi] = T[\Psi] + V_{ne}[\Psi] + V_{ee}[\Psi]
\ee
where (employing the notation $z_i=(x_i,s_i)\in\R^3\times\Z_2$, $\int dz_i = \sum_{s_i\in\Z_2}\int_{\R^3}dx_i$)
$$
  T[\Psi] = \frac12 \int ... \int \sum_{i=1}^N |\nabla_{x_i}\Psi(x_1,s_1,..,x_N,s_N)|^2 dz_1..dz_N
$$
is the kinetic energy,
$$
  V_{ne}[\Psi] = \int ... \int \sum_{i=1}^N v(x_i)|\Psi(x_1,s_1,..,x_N,s_N)|^2 dz_1..dz_N
$$
is the electron-nuclei interaction energy, and
$$
  V_{ee}[\Psi] = \int ... \int \sum_{1\le i < j \le N} \frac{1}{|x_i-x_j|} |\Psi(x_1,s_1,..,x_N,s_N)|^2 dz_1..dz_N
$$
is the electron-electron interaction energy.

The quantum mechanical ground state energy is defined as
\be \label{VP}
  E_0^{QM} = \inf_{\Psi\in{\mathcal A}} {\cal E}^{QM}[\Psi].
\ee
In the usual case where $v$ is given by (\ref{potential}) and $N=\sum_{\alpha=1}^MZ_\alpha$ (neutral molecules), it is a basic theorem due to Zhislin that the infimum is
attained. For a simple proof see [Fr03].

Since the energy functional is a quadratic form, we could equivalently have defined $E_0^{QM}$ as the lowest eigenvalue of
the corresponding linear partial differential operator $-\frac12\sum_{i=1}^N\Delta_{x_i} + \sum_{i=1}^N v(x_i) +
\sum_{i<j}\frac{1}{|x_i-x_j|}$. This formulation is useful for many other purposes, but -- unlike (\ref{VP}) --
does not play an important role in DFT.

\subsection{Probabilistic interpretation; marginals}
The absolute value squared of $\Psi$ can be interpreted as an $N$-point probability distribution,
$$
  |\Psi(x_1,s_1,..,x_N,s_N)|^2 = \mbox{ probability density that the electrons are at positions $x_i$ with spins
  $s_i$}.
$$
In quantum mechanics this is known as the Born interpretation. Note that the above function is nonnegative and
integrates to 1, due to the requirement that $\Psi$ has $L^2$ norm 1. (In fact this was the physical motivation
for this requirement.)

Various partial marginals will play an important role. First, by integrating out the spins we obtain the
$N$-point position density:
\be \label{rhoN}
  \rho_N^\Psi(x_1,..,x_N) = \sum_{s_1,..,s_N\in\Z_2} |\Psi(x_1,s_1,..,x_N,s_N)|^2.
\ee
Next, by integrating out all but two respectively one electron positions we obtain the pair density and the
single particle density:\begin{align}
  \rho_2^{\Psi}(x_1,x_2) &= {N\choose 2}\int_{\R^{3(N-2)}} \rho_N^\Psi(x_1,..,x_N)dx_3..dx_N, \label{rho2} \\
  \rho^{\Psi}(x_1) &= N\int_{\R^{3(N-1)}} \rho_N^\Psi(x_1,..,x_N)dx_2..dx_N. \label{rho}
\end{align}
For the rest of this section, we drop the superscript $\Psi$ from $\rho^\Psi$ and $\rho_2^\Psi$.

The normalization factors are a convention in quantum mechanics so that $\rho$ integrates to the number
of particles and $\rho_2$ to the number of pairs in the system. (In Section 3 we find it convenient to work with
the corresponding probability densities, normalized so as to integrate to 1.)
With the above conventions, the important fact that $\rho$ is a marginal distribution of $\rho_2$ takes the form
\be \label{marginal}
  \frac{N}{{N\choose 2}} \int \rho_2(x,y) \, dy = \rho(x), \;\; \frac{N}{{N\choose 2}} \int \rho_2(x,y) \, dx = \rho(y).
\ee

The relevance of $\rho$ and $\rho_2$ for determining the ground state energy (\ref{VP})
come from the fact that the electron-nuclei energy $V_{ne}$ and
the electron-electron energy $V_{ee}$ in (\ref{EQM}) depend only on these.
\begin{lemma} \label{DFT1} With the above definitions, for any $\Psi\in{\cal A}$ we have
\beam\label{vnevee}
   V_{ne}[\Psi] = \int_{\R^3} v(x)\rho(x)\, dx, \;\;\; V_{ee}[\Psi] = \int_{\R^6}
   \frac{1}{|x-y|}\rho_2(x,y)\, dx\, dy.
\eeam
\end{lemma}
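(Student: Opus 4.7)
The plan is to reduce each sum to a single representative term using the permutation symmetry of $|\Psi|^2$, then perform the spin summation and partial integration to recognize the marginals $\rho$ and $\rho_2$ defined in (\ref{rhoN})--(\ref{rho}). The argument is essentially a bookkeeping calculation; there is no genuine analytic difficulty, and I do not expect any serious obstacle.

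First I would observe that the antisymmetry hypothesis in (\ref{class}) implies that $|\Psi(z_1,\ldots,z_N)|^2$ is invariant under every permutation of the space-spin coordinates $z_i=(x_i,s_i)$. For the electron-nuclei term, a change of variables $z_i \leftrightarrow z_1$ then shows that each of the $N$ summands $\int v(x_i)\,|\Psi|^2\, dz_1\cdots dz_N$ equals the $i=1$ value, so
\[
  V_{ne}[\Psi] = N \int v(x_1)\,|\Psi(z_1,\ldots,z_N)|^2\, dz_1\cdots dz_N.
\]
Interchanging the spin summations with the spatial integrals (legitimate by Tonelli since $|\Psi|^2 \ge 0$) and using (\ref{rhoN}) turns the integrand into $v(x_1)\rho_N^\Psi(x_1,\ldots,x_N)$; integrating out $x_2,\ldots,x_N$ and invoking (\ref{rho}) yields
\[
  V_{ne}[\Psi] = \int_{\R^3} v(x)\rho(x)\, dx,
\]
which is the first identity.

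The electron-electron term is entirely analogous. Symmetry of $|\Psi|^2$ under swapping an unordered pair $\{z_i,z_j\}$ with $\{z_1,z_2\}$ reduces the sum over $1\le i<j\le N$ to ${N\choose 2}$ copies of the $(i,j)=(1,2)$ term. The spin summation then produces $\rho_N^\Psi$, and integrating out $x_3,\ldots,x_N$ and using (\ref{rho2}) gives
\[
  V_{ee}[\Psi] = {N\choose 2} \int_{\R^6} \frac{1}{|x_1-x_2|}\cdot \frac{1}{{N\choose 2}}\,\rho_2(x_1,x_2)\, dx_1\, dx_2 = \int_{\R^6}\frac{\rho_2(x,y)}{|x-y|}\, dx\, dy.
\]

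The only point deserving a brief remark is the absolute convergence of the integrals, needed to cover the signed case. For $V_{ne}$, this follows from the hypothesis $v\in L^{3/2}(\R^3)+L^\infty(\R^3)$ together with $\rho\in L^1(\R^3)\cap L^3(\R^3)$ (a consequence of $\nabla\Psi\in L^2$ via the Sobolev embedding), so the product $v\rho$ is integrable. For $V_{ee}$ the integrand is nonnegative, and a standard Hardy-type estimate shows $V_{ee}[\Psi]<\infty$ whenever $\nabla\Psi\in L^2$; so Tonelli again justifies every interchange. The lemma is then a direct reading of the definitions.
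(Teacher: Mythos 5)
Your argument is correct and is exactly the paper's proof, merely written out in full: the paper's one-line justification is precisely that antisymmetry makes $|\Psi|^2$ symmetric in the $z_i$, so each sum collapses to $N$ (resp.\ ${N\choose 2}$) copies of one term, and the normalization factors in (\ref{rho2}), (\ref{rho}) then produce the stated identities. Your added remark on absolute convergence is a harmless (and welcome) extra beyond what the paper records.
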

\bproof
This follows from definitions (\ref{rho2}), (\ref{rho}) and the fact that due to the antisymmetry of $\Psi$,
$|\Psi(z_1,..,z_N)|^2$ is a symmetric function of the $z_i$.
\eproof
In the sequel we write $V_{ne}[\rho]$, $V_{ee}[\rho_2]$ instead of $V_{ne}[\Psi]$, $V_{ee}[\Psi]$.

Also, we note that the
space of densities arising from functions $\Psi\in\cala$,
\begin{equation} \label{admdens}
     \calR := \{ \rho \, : \, \R^3\to\R \, | \, \rho \mbox{ is the density (\ref{rho}) 
     of some }\Psi\in\cala\},
\end{equation}
is known explicitly: by a result of Lieb [Li83],
\beq \label{admdens2}
   {\cal R} = \{ \rho \, : \, \R^3 \to \R \, | \, \rho\ge 0, \, \sqrt{\rho}\in H^1(\R^3), \, \int_{\R^3}\rho(x)\, dx = N\},
\eeq
where $H^1(\R^3)$ is the usual Sobolev space $\{ u \in L^2(\R^3) \, | \, \nabla u\in L^2(\R^3)\}$.
\subsection{Universal Hohenberg-Kohn functional}
The expression for $V_{ne}$ derived in Lemma \ref{DFT1} leads to the following well known partitioning of the minimization (\ref{VP})
into a double minimization (first minimize over $\Psi$ subject to fixed $\rho$, then over $\rho$):
\beq \label{HKVP}
   E_0^{QM} = \inf_{\rho\in\calR} \Big\{ F_{HK}[\rho] + \int_{\R^3} v(x)\,\rho(x) dx \Big\}
\eeq
with
\beq \label{FHK}
   F_{HK}[\rho] := \inf_{\Psi\in\cala, \, \Psi\mapsto\rho} \Big\{T[\Psi]+V_{ee}[\rho_2]\Big\}.
\eeq
Here and below, the notation $\Psi\mapsto\rho$ means that $\Psi$ has single-particle density $\rho$.
Note that $F_{HK}$ is a universal functional of $\rho$, in the sense that it does not depend on the external potential $v$,
and is called the {\it Hohenberg-Kohn functional}. It is defined on the admissible set (\ref{admdens}).
The above constrained-search definition of $F_{HK}$ is due to Levy and
Lieb [Le79, Li83]; in the original Hohenberg-Kohn paper [HK64] the functional was constructed in a more indirect and slightly less general
way, requiring that $\rho$ be the density of some $\Psi$ which is a non-degenerate ground state
of $\calE^{QM}$ for some potential $v$.

\subsection{Exchange-correlation functionals}

The problem with definition (\ref{VP}) of $E_0^{QM}$, as well as definition (\ref{FHK}) of the `exact' density
functional $F_{HK}$, is that it is unfeasible in practice except when
the number $N$ of particles is very small. This is due to the so-called
problem of exponential scaling: the functions over which one minimizes are functions on $\R^{3N}$ and the discretization of $\R^{3N}$ requires a $K^N$-point grid if the single-particle
space $\R^3$ is discretized by a $K$-point grid.

This problem would disappear if we could accurately approximate
$V_{ee}$ in the variational principle (\ref{VP}) by a functional $\tilde{V}_{ee}$ of
$\rho$ instead of $\rho_2$,
\be \label{keyapprox}
   V_{ee}[\rho_2] \approx \tilde{V}_{ee}[\rho].
\ee
(Why this is so is not completely trivial, since there remains $T$ to deal with, but see eq. (\ref{KE2}) below.)
Thus in DFT one approximates the variational principle for the ground state energy $E_0^{QM}$ by:
\be \label{DFTen}
   E_0^{DFT} = \inf_{\Psi\in {\cal A}} \Big\{ T[\Psi] + V_{ne}[\rho] + \tilde{V}_{ee}[\rho]\Big\}.
\ee
Physically, this means (in the light of Lemma \ref{DFT1}) that in DFT, interactions of electrons with an
external environment, such as the Coulomb forces exerted by an array of atomic nuclei, are
included exactly, but electron-electron interactions have to be suitably ``modelled''. 
By partitioning the minimization in (\ref{DFTen}) analogously to (\ref{HKVP}), (\ref{FHK}), $E_0^{DFT}$ can be obtained
by minimization of a functional of $\rho$ alone,
\be \label{DFTen2} 
   E_0^{DFT} = \inf_{\rho\in {\cal R}} \Big\{ T_{QM}[\rho] + V_{ne}[\rho] + \tilde{V}_{ee}[\rho]\Big\}, \;\;\; 
   T_{QM}[\rho] := \inf_{\Psi\in\cala, \, \Psi\mapsto\rho} T[\Psi].
\ee
The minimization over the ``large'' space ${\cal A}$ of functions on $(\R^3\times\Z_2)^N$ in
(\ref{DFTen2}) can now be replaced by a minimization over a much ``smaller'' space. As can be shown with the
help of reduced density matrices [CY02], and abbreviating $\int=\sum_{s\in\Z_2}\int_{\R^3}$,
\begin{eqnarray}
\label{KE2}
   \inf_{\Psi\in\cala, \, \Psi\mapsto\rho} T[\Psi] = \inf \Big\{ \sum_{i=1}^\infty \frac{\lambda_i}{2}
   \int|\nabla\phi_i|^2 \, & | & \, 0\le\lambda_i\le 1, \, \sum_{i=1}^\infty\lambda_i=N, \,\nonumber \\
   & &  \phi_i\in H^1(R^3\times\Z_2), \, \int\phi_i\overline{\phi_j} = \delta_{ij},
   \, \sum_{i=1}^\infty \sum_{s\in\Z_2}|\phi_i(x,s)|^2 = \rho(x) \Big\}.\nonumber\\
\end{eqnarray}
After truncating the sum after an appropriate number $i_{max}$ of terms (the standard truncation
being $i_{max}=N$, yielding the Kohn-Sham kinetic energy functional [KS65])
and discretizing $\R^3$ by a K-point grid, the number of degrees of freedom of the right hand side
scales linearly instead of exponentially in $N$.

By means of this fact, the task of eliminating the exponential complexity of (\ref{VP}) is reduced to the following
\\[2mm]
{\bf Fundamental problem of DFT} {\it Design accurate approximations of the form (\ref{keyapprox}). In other words,
approximate a simple explicit functional of the pair density $\rho_2$, a function on $\R^6$, by
a functional of its joint right and left marginal $\rho$, a function on $\R^3$.
Note that the approximations only need to be accurate for single-particle densities and pair densities of ground states of molecules,
not arbitrary states. Elsewhere it suffices that the approximations give a reasonably good lower bound so as to avoid spurious minimizers.}

\bexam (statistical independence) The simplest idea would be to assume statistical independence,
\be \label{indansatz}
    \rho_2(x,y) \approx \frac{1}{2}\rho(x)\rho(y)
\ee
(the factor $1/2$ coming from the normalization factors in (\ref{rho}), (\ref{rho2})),
and substitute this ansatz into the formula for $V_{ee}$ derived in Lemma \ref{DFT1}. This leads to taking
\be \label{indansatzforVee}
   \tilde{V}_{ee}[\rho] = \frac12 \int_{\R^6}\frac{1}{|x-y|} \rho(x)\rho(y)\, dx\, dy =: J[\rho],
\ee
i.e. $V_{ee}$ is replaced by the Coulomb self-repulsion of the single-particle density. The above mean field functional
appears, for instance, in Thomas-Fermi-theory.

In modern DFT, the very naive ansatz (\ref{indansatz}) was never used, but -- without this being natural from a probabilistic point of view --
the convention is to include corrections to it additively, i.e. one makes an ansatz
\be \label{DFTansatz}
   \tilde{V}_{ee}[\rho] = J[\rho] + E_{xc}[\rho],
\ee
the additive correction being called an {\it exchange-correlation functional}. This notational convention should
not, of course, prevent us from contemplating non-additive modifications of (\ref{indansatz}) and (\ref{indansatzforVee}).
\eexam

\bexam\label{meanfield} (correctly normalized mean field)
Let $\rho_2={N\choose 2}\gamma$ and $\rho=N\mu$, so that $\gamma$ and $\mu$ have integral $1$. Then
$$\gamma\approx\mu\otimes\mu\Leftrightarrow\frac{\rho_2}{{N\choose 2}}\approx\frac{\rho\otimes\rho}{N^2},$$
which is equivalent to
$$\rho_2\approx\frac{1}{2}\left(1-\frac{1}{N}\right)\rho\otimes\rho,$$
where here and below we use the notation $(\rho\otimes\rho')(x,y)=\rho(x)\rho'(y)$, corresponding to the product measure
when interpreting $\rho$, $\rho'$ as measures.

Note that physicists and chemists use
$$\rho_2\approx\frac{1}{2}\rho\otimes\rho,$$
which is justified in the context of macroscopic systems such as an electron gas (where one has taken a limit
$N\to\infty$), but less natural in the context of DFT for atoms and molecules.
\eexam

\bexam (local density approximation)
In a model system, the so-called free electron gas, the pair density can be determined explicitly [Fr97].
In this case the single-particle density is a constant,
\be \label{homogeneity}
    \rho(x)\equiv\overline{\rho},
\ee
and the pair density can be determined to be
\be \label{electrongas}
    \rho_2(x,y) = \frac{1}{2}\overline{\rho}^2\Bigl(1 - \frac{1}{q} h\Bigl((3\overline{\rho}\pi^2)^{1/3}|x-y|\Bigr)^2\Bigr),
\ee
where
$$
    h(s) = \frac{3(\sin s - s\cos s)}{s^3}.
$$
In particular, at long range $|x-y|\to\infty$ statistical independence is correct, but at short range
$|x-y|\to 0$, $\rho_2$ tends to zero in the case of a single spin state, i.e. it vanishes on the diagonal $x=y$,
and to half the size of a statistically independent sample in the (physical) case of two spin states.
Substituting the result (\ref{electrongas}) 
into the formula for $V_{ee}[\rho_2]$ leads 
to the so-called
{\it local density approximation}
\be \label{LDA}
    \tilde{V}_{ee}[\rho] = J[\rho] - c_x \int_{\R^3}\!\!\rho(y)^{4/3}dy,
\ee
where $c_x=\frac34(\frac3{\pi})^{1/3}$. As a heuristic approximation to $V_{ee}$,
this formula goes back to Dirac and Bloch (for a rigorous justification see [Fr97]).
It was widely used in the early days of DFT, following [KS65].
\eexam

\bexam (B3LYP)
Current functionals used in practice, e.g. the `B3LYP' functional of Becke, Lee, Yang and Parr
[Be93, LYP88], rely on -- from a mathematical point of view questionable -- guesses of functional forms
(e.g. local in $\rho$, or local in $\rho$ and $\nabla\rho$), additional terms depending non-locally
on the orbitals in (\ref{KE2}), and careful fitting of parameters to experimental
or high-accuracy-computational data. The resulting expressions are a little too complicated
to write down here. They have led to an accuracy improvement for $E_0^{QM}$ over the local density approximation
of an order of magnitude or so, but not more, with little progress in the last decade despite continuing effort.
\eexam

\section{Optimal transportation for DFT}
We begin with a basic observation. The weight factor in front of $\rho_2$ in $V_{ee}$ in (\ref{vnevee}) is always positive, and largest on the diagonal $x=y$,
so even ``complete anticorrelation'' might be a better ansatz than independence (keeping in mind that
the states on which the ansatz needs to be good are the minimizers of a functional which includes $V_{ee}$).

In fact, such a complete anticorrelation is exactly what emerges when one starts from the exact Hohenberg-Kohn functional $F_{HK}$, inserts a semiclassical factor $\hbar^2$
in front of the kinetic energy functional $T$ in (\ref{FHK}) and passes to the semiclassical limit $\hbar\to 0$.
In this limit, the Hohenberg-Kohn  functional $F_{HK}$ reduces to the following functional obtained by
a minimization over pair densities instead of wave functions,
\beq \label{formallimit1}
   \tilde{F}[\rho] = \inf_{\rho_2\in\calR_2,\, \rho_2 \mapsto \rho}
   \int_{\R^6} \frac{1}{|x-y|}\, \rho_2(x,y) \, dx\, dy.
\eeq
Here $\rho_2\mapsto\rho$ means that $\rho_2$ satisfies eq. (\ref{marginal}), i.e. it has right and left
marginal $\rho$, and the set $\calR_2$ of admissbile pair densities is the image of $\cala$ under the map $\Psi\mapsto
\rho_2$. Unlike the corresponding admissible set of single-particle densities, $\calR_2$ is not known
explicitly (this is a variant of the representability problem for two-particle density matrices [CY02]).

{\it Formally}, ignoring this point and discarding in particular the smoothness restriction (which can be proved analogously
to the proof in [Li83] of (\ref{admdens2})) that
\beq \label{smoothnessR2}
    \rho_2\in\calR_2 \Longrightarrow \sqrt{\rho_2}\in H^1(\R^6),
\eeq
the above functional $\tilde{F}$ reduces to the functional
\beq \label{EMT}
    E_{OT}[\rho] = \inf_{\rho_2\in\calM_+, \, \rho_2\mapsto\rho} C[\rho_2], \;\;\;\;\;
    C[\rho_2] := \int_{\R^6}\frac{1}{|x-y|} d\rho_2(x,y),
\eeq
where $\calM_+$ denotes the set of (nonnegative) Radon measures on $\R^6$. For a rigorous justification
that (\ref{EMT}) is indeed the correct semiclassical limit of $F_{HK}$ in case $N=2$ see section~5.

The variational problem that has appeared here,
to minimize a ``cost functional'' $C$ over a set of joint measures on $\R^6$ subject to fixed marginals,
is an {\it optimal transport problem}. This type of problem, with ``cost functions'' such as $|x-y|$ or $|x-y|^2$
instead of $|x-y|^{-1}$, dates back to Monge in 1781 and has a famous history, which is nicely summarized in the very readable paper [GM96],
which was our main source when studying the problem (\ref{EMT}).

Before formulating our particular problem we give some notations and definitions.

For a set $Z\subset\R^d$, we denote by $\calp(Z)$ the set of probability measures on $Z$. If $Z$ is a closed subset of $\R^d$ and $\gamma\in\calp(Z)$, then the support of $\gamma$ is the smallest closed set $supp~\gamma\subset Z$ of full mass, that is such that $\gamma(supp~\gamma)=\gamma(Z)=1$.

Suppose $X,Y\subset\R^d$ are closed sets. If $\mu\in\calp(X)$ and $\nu\in\calp(Y)$, we denote by $\Gamma(\mu,\nu)$ the joint probability measures $\gamma$ on $\R^d\times\R^d$ which have $\mu$ and $\nu$ as their marginals, that is with $\mu(U)=\gamma(U\times\R^d)$ and $\nu(U)=\gamma(\R^d\times U)$ for Borel $U\subset\R^d$. In fact, if $\gamma\in\Gamma(\mu,\nu)$, then $supp~\gamma\subset X\times Y$. Typically $\mu$ has density $u_1$ and $\nu$ has density
$u_2$, where $u_1$, $u_2$ are $L^1$ functions, in which case we write
$\Gamma(u_1,u_2)$.

\begin{remark}
In this section it is convenient to eliminate the prefactors in $\rho_2$ and $\rho$, and to consider the cost functional $C$ on probability measures $\gamma$, with equal marginals $\mu$
(which are again probability measures),
with $\rho_2$ in (\ref{EMT}) corresponding to ${N\choose 2}\gamma$ and $\rho$ corresponding to $N\mu$ as in Example~\ref{meanfield}.
\end{remark}

We first formulate the general problem, which is now called \textit{the Kantorovich problem}.
For some \textit{cost function} $c:\R^d\times\R^d\rightarrow\R\cup\{+\infty\}$
we are interested in minimizing the transport cost
\begin{equation} \label{Cofgamma}
C[\gamma]:=\int c(x,y)d\gamma(x,y),
\end{equation}
among joint measures $\gamma\in\Gamma(\mu,\nu)$, called \textit{transport plans}, to obtain
\begin{equation}
\inf_{\gamma\in\Gamma(\mu,\nu)}C[\gamma].
\end{equation}
Let $\calt(\mu,\nu)$ be the set of Borel maps $T:\Omega\subset\R^d\rightarrow\R^d$
that {\em push $\mu$ forward to $\nu$}, i.e. $T_{\#}\mu[V]:=\mu(T^{-1}(V))=\nu(V)$ for Borel $V\subset\R^d$.
The so-called \textit{Monge problem} is to minimize
\beam\label{pushfor}
I[T]=\int_{\bbr^d} c(x,T(x)) d\mu(x)
\eeam
over maps $T$ in $\calt(\mu,\nu)$, called \textit{transport maps}, to obtain
\beam\label{monge}
\inf_{T_{\#}\mu=\nu} I[T].
\eeam
There is a natural embedding which associates to each transport map $T\in\calt(\mu,\nu)$ a transport map $\ga_T:=({\rm id}\times T)_{\#}\mu\in\Gamma(\mu,\nu)$ or, in physics notation, $\gamma_T(x,y):=\delta_{T(x)}(y)\mu(x)$, where id$:\bbr\to\bbr$ is the identity map.
Since $C[\gamma_T]=I[T]$, we conclude that
\beam\label{CI}
\inf_{\ga\in\Ga(\mu,\nu)} C[\ga]\le\inf_{T\in\calt(\mu,\nu)} I[T].
\eeam
Let $\bar\R:=\R\cup\{\pm\infty\}$ and endow $\bar\R$ with the usual topology so that $c\in C(\R^d\times\R^d,\bar\R)$ means that \mbox{$\lim_{(x,y)\to(\bar x,\bar y)} c(x,y)= c(\bar x,\bar y)$}. In particular, if $c(\bar x,\bar y)=+\infty$, then $c(x,y)$ tends to $+\infty$ as $(x,y)$ tends to $(\bar x,\bar y)$, and similarly for limit $-\infty$.

Throughout we will work with cost functions $c$ such that $c(x,y)=h(x-y)$ for all $x,y\in\bbr^d$. We recall the definition of the dual of a convex function and refer to Rockafellar~\cite{Ro72} for standard definitions and further background.
\begin{defn}
The dual (or Legendre transform) $h^*:\R^d\rightarrow\R\cup\{+\infty\}$ of a convex function $h:\R^d\rightarrow\R\cup\{+\infty\}$ is given by
\begin{equation}
\label{legendre}
h^*(y):=\sup_{x\in\R^d}\{<x,y>-h(x)\}.
\end{equation}
\end{defn}
Since our cost functions of interest are not convex, we need to work with the following more subtle definition of Legendre transform.
\begin{defn} (generalized Legendre transform)
Suppose that $l:\bbr\to\bbr\cup\{+\infty\}$ is lower semi-continuous and convex.
Define $k:\R\rightarrow\R\cup \{+\infty\}$ by $k(\lambda)=l(\lambda)$ if $\lambda\ge 0$ and $k(\lambda)=+\infty$ otherwise.
Define $h^*:\R^d\rightarrow\R\cup \{+\infty\}$ by
\begin{equation}
\label{legr}
h^*(x)=k^*(-|x|):=\sup_{\beta \in\bbr}\{-\beta |x|-k(|x|)\},\quad x\in\R^d.
\end{equation}
We define $l^{\circ}(\lambda)=k^*(-|\lambda|)$ for $\la\in\R$.
\end{defn}
\begin{defn}
A function $\psi:\R^d\rightarrow\R\cup\{-\infty\}$, not identically $-\infty$, is said to be {\em $c$-concave} if it is the infimum of a family of translates and shifts of $h(x)$: i.e, there is a set $A\subset\R^d\times\R$ such that
\begin{equation}
\label{cconcave}
\psi(x):=\inf_{(y,\lambda)\in A}\{ c(x,y)+\lambda\},\quad x\in \bbr^d.
\end{equation}
\end{defn}

Let $\Delta:=\{(x,x)\mid x\in\R^d\}$ and $c:\R^d\times \R^d\rightarrow\R\cup\{+\infty\}$ be such that $c(x,y):=h(x-y):=l(|x-y|)\ge 0$ and with $c$ and $l$ such that
\begin{enumerate}
\item [(A1)]
$l:[0,\infty]\rightarrow [0,\infty]$ is strictly convex, strictly decreasing and $C^1$ on~$(0,\infty)$;
\item [(A2)] $c\in C^1(\R^d\times\R^d\setminus\Delta,\R)$;
\item [(A3)] $c:\R^d\times\R^d\rightarrow [0,+\infty]$ is lower semi-continuous;
\item [(A4)] for every $x_0\in\R^d, c(x_0,x_0)=+\infty$.
\end{enumerate}

\begin{remark}
Note that \cite{GM96} only assume that $l$ is continuous and not $C^1$; using their arguments, we could replace (A1) and (A2) by
\begin{enumerate}
\item [{\rm (A1')}]
$l:[0,\infty]\rightarrow [0,\infty]$ is strictly convex, decreasing and continuous on~$(0,\infty)$;
\item [{\rm (A2')}] $c\in C(\R^d\times\R^d\setminus\Delta,\R)$.
\end{enumerate}
\end{remark}
The main result of this section, obtained by combining the results from Theorems \ref{unique} and \ref{geom}, is
\begin{theorem}
\label{uniqgeom}
Assume that $c$ and $l$ satisfy (A1)-(A4), and let $\mu,\nu\in \calp(\R^d)$ be absolutely continuous with respect to the Lebesgue measure.
Then there exists a unique minimizer $\gamma\in\Gamma(\mu,\nu)$ of the functional $C$ defined in (\ref{Cofgamma}), and
a unique transport map $T$ pushing $\mu$ forward to $\nu$ such that
$\gamma=(id,T)_{\#}\mu$ (or, in physics notation, $\gamma(x,y)=\delta_{T(x)}(y)\mu(y)$). This map
is of the form $T(x)=x-\nabla h^{*}(\nabla\psi(x))$ for some $c$-concave function $\psi$ on $\R^d$.
\end{theorem}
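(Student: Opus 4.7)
The plan is to adapt the geometric strategy of Gangbo--McCann \cite{GM96} from smooth, increasing, strictly concave-in-distance costs to the present decreasing, strictly convex-in-distance, diagonally-singular cost. First, existence of a minimizer $\gamma^*$ comes from the direct method: $\Gamma(\mu,\nu)$ is weakly compact by Prokhorov's theorem (tightness of $\mu,\nu$), and $\gamma\mapsto\int c\,d\gamma$ is weakly lower semicontinuous because $c\ge 0$ is lower semicontinuous by (A3). Kantorovich duality then furnishes an optimal $c$-concave potential $\psi$, with $c$-transform $\psi^c(y):=\inf_x\{c(x,y)-\psi(x)\}$, satisfying $\psi(x)+\psi^c(y)\le c(x,y)$ on $\R^d\times\R^d$ with equality on $\mathrm{supp}(\gamma^*)$. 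By (A4) together with the finiteness of $C[\gamma^*]$, the contact set avoids the diagonal $\Delta$, so (A2) places us in the $C^1$ regime at every contact point.

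\textbf{Paragraph 2 (First-order condition and formula for $T$).} At any contact point $(x_0,y_0)$, the map $x\mapsto c(x,y_0)-\psi(x)$ attains its minimum at $x_0$. Wherever $\psi$ is differentiable at $x_0$, the first-order condition reads
\[
\nabla \psi(x_0)=\nabla_x c(x_0,y_0)=l'\bigl(|x_0-y_0|\bigr)\,\frac{x_0-y_0}{|x_0-y_0|}.
\]
Writing $r=|x_0-y_0|$ and $\omega=(x_0-y_0)/r$, this becomes $\nabla\psi(x_0)=l'(r)\,\omega$. By (A1), $l'$ is strictly increasing on $(0,\infty)$ with values in $(-\infty,0)$, so one can invert to obtain
\[
\omega=-\frac{\nabla\psi(x_0)}{|\nabla\psi(x_0)|},\qquad r=(l')^{-1}\bigl(-|\nabla\psi(x_0)|\bigr),
\]
which determines $y_0$ uniquely from $x_0$. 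An envelope-theorem computation applied to the generalized Legendre transform (\ref{legr}) then identifies $r\,\omega=\nabla h^*(\nabla\psi(x_0))$, yielding the announced $T(x_0)=x_0-\nabla h^*(\nabla\psi(x_0))$.

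\textbf{Paragraph 3 (Upgrade to $\mu$-a.e., uniqueness, and main obstacle).} To promote the pointwise analysis of Paragraph~2 to a $\mu$-a.e.\ statement, I would show that $\psi$, as an infimum of the family $x\mapsto c(x,y)+\lambda$ in (\ref{cconcave}), is locally Lipschitz on the open set where it is finite; Rademacher's theorem combined with absolute continuity of $\mu$ then gives differentiability of $\psi$ at $\mu$-a.e.\ $x_0$, so every optimal $\gamma^*$ is concentrated on the graph of $T$ and $\gamma^*=(\mathrm{id},T)_{\#}\mu$. Uniqueness of the plan follows by a standard averaging: if $\gamma_1,\gamma_2$ are both optimal, so is $\tfrac12(\gamma_1+\gamma_2)$, which must again be carried by a single graph, forcing $\gamma_1=\gamma_2$ and $T_1=T_2$ $\mu$-a.e. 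The step I expect to be the main obstacle is precisely the local Lipschitz estimate for $\psi$: the blow-up of $c(x,\cdot)$ at $x$ prevents a direct appeal to the Gangbo--McCann argument, so one has to exploit (A4) together with (A1) to show that the pairs $(y,\lambda)$ in (\ref{cconcave}) nearly realizing $\psi(x_0)$ cannot cluster at $x_0$, and thus that the infimum is effectively taken over a smooth family with uniformly bounded gradients in $x$.
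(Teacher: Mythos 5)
Your overall framework (dual/geometric potential $\psi$, first-order condition $\nabla\psi(x_0)=l'(|x_0-y_0|)\tfrac{x_0-y_0}{|x_0-y_0|}$, inversion via the generalized Legendre transform to get $T=\mathrm{id}-\nabla h^*(\nabla\psi)$, then concentration on a graph) is the same Gangbo--McCann strategy the paper adapts, and your Paragraph~2 is essentially Lemmas \ref{difsubdif} and \ref{supdif}. Your uniqueness step is, however, genuinely different from the paper's: you average two optimal plans and invoke the graph theorem for the average, whereas the paper adapts Theorem~4.4 of \cite{GM96} (comparing $\mu(T^{-1}(V))$ and $\mu(T'^{-1}(V))$ on $V=\partial^c\psi(U)$, $U=\{\psi>\psi'\}$, which hinges on Lemma \ref{pointexis}). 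Your averaging argument is valid and simpler, \emph{provided} you know that every optimal plan --- including the average --- is carried by a graph; this needs the fact (Theorem \ref{3.2}(b)) that a single $c$-cyclically monotone set contains the supports of all optimal measures, or an equivalent statement from duality.

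There are three genuine gaps. First, you assume $C[\gamma^*]<\infty$; this is not automatic (the product plan can have infinite cost, and $l$ may blow up arbitrarily fast at $0$), and without a finite-cost competitor the "uniqueness" claim is vacuous. The paper supplies this in Lemma \ref{exist1} by explicitly building a plan whose support stays at distance $\ge\epsilon$ from the diagonal. Second, your appeal to Kantorovich duality to \emph{produce} an optimal $c$-concave $\psi$ with equality on $\mathrm{supp}\,\gamma^*$ is not justified for a cost that is $+\infty$ on the diagonal: existence of dual maximizers is exactly the delicate point for such costs. The paper sidesteps duality entirely, obtaining $\psi$ from $c$-cyclical monotonicity of the support via the R\"uschendorf construction (\ref{ctransfdef}), and then must still prove in Lemma \ref{newsupdif}(2) that this $\psi$ is finite somewhere and that $\partial^c\psi$ avoids the diagonal --- steps you do not address. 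Third, and most substantially, the $\mu$-a.e.\ differentiability of $\psi$ is precisely where the paper's work is concentrated (Lemma \ref{semiconc}: splitting $\psi=\min(\psi_1,\psi_2)$ according to whether $y$ lies in $S(0;r)$ or not, and taming the far part by the semiconvexification $l_\delta(\lambda)=l(\lambda)-\xi\lambda^2$ so that \cite{GM96} applies). You correctly flag this as the main obstacle, but your proposed fix --- that nearly-minimizing pairs $(y,\lambda)$ cannot cluster at $x_0$ --- is only a heuristic: terms with $y$ near $x$ have unbounded gradients even when they are not competitive in the infimum, so a uniform local Lipschitz bound does not follow from non-clustering alone and must be argued along the lines of the paper's decomposition. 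As written, the existence half of the theorem therefore remains unproved.
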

We next give a simple example, which illustrates the emergence of such an optimal map $T$ when the marginals consist of two Dirac delta
functions, and explicitly compute the function $h^*$ appearing above for our cost function of interest.

\bexam
Let $a,b\in \R$. For $c$ and $l$ as in (A1)-(A4) we are interested in minimizing
\begin{equation}
\label{minim}
\int c(x,y)d\gamma(x,y),
\end{equation}
subject to
\begin{equation}
\label{constr}
\int\gamma(x,y)dx=\delta_a(y)+\delta_b(y~)~\mbox{and}~~\int\gamma(x,y)dy=\delta_a(x)+\delta_b(x),
\end{equation}
where $\delta_a$ is the Dirac function such that, for all Borel subsets of $\R$, we have $\int_{\Omega}\delta_a(y)=1$ if $a\in\Omega$,
and $\int_{\Omega}\delta_a(y)=0$ otherwise. We claim that the minimum in (\ref{constr}) is attained for
\begin{equation}
\label{formdirac}
\gamma(x,y)=\delta_a(x)\delta_b(y)+\delta_b(x)\delta_a(y) =: \gamma_0(x,y).
\end{equation}
To show this, note first that
$$\gamma(x,y)=c_{aa}\delta_a(x)\delta_a(y)+c_{ab}\delta_a(x)\delta_b(y)+c_{ba}\delta_b(x)\delta_a(y)+c_{bb}\delta_b(x)\delta_b(y),$$
with $c_{aa},c_{ab},c_{ba},c_{bb}\ge 0$ and $c_{aa}+c_{ab}+c_{ba}+c_{bb}=2$. Due to the constraints on $\gamma$ from (\ref{constr}) we have
$c_{ba}=c_{ab}$ and $c_{aa}=c_{bb}$. Hence
$$\gamma(x,y)=\alpha\left(\delta_a(x)\delta_a(y)+\delta_b(x)\delta_b(y)\right)+\beta\left(\delta_a(x)\delta_b(y)+\delta_b(x)\delta_a(y)\right),~\mbox{with}~\alpha,\beta\ge 0~\mbox{and}~\alpha+\beta=1. $$
Minimizing (\ref{minim}) subject to (\ref{constr}) is then equivalent to the following problem:
$$ \mbox{Minimize }2\alpha\, l(0)+2\beta\, l(|b-a|) \mbox{ subject to the constraints }\alpha,\beta\ge 0, \, \alpha+\beta=1.$$
Since by (A1) we have $l(0)>l(|a-b|)$, the minimum in the above is attained for $\alpha=0$ and $\beta=1$, which proves the claim.

Formula (\ref{formdirac}) for the minimizer admits a very interesting interpretation which motivates the notion of {\it optimal transport map}
and foresees the structure of general minimizers as given in Theorem \ref{tdescrip}. Denote the single-particle density $\delta_a(x)+\delta_b(x)$
in (\ref{constr}) by $\rho(x)$, and introduce the map $T\, : \, \{a,b\}\to\{a,b\}$ which maps $a$ to $b$ and $b$ to $a$. Then $T$ pushes $\rho$
forward to $\rho$, and the optimal measure $\gamma_0$ has the form
$$
   \gamma_0(x,y) = \delta_{T(x)}(y)\rho(x),
$$
or, in measure-theoretic notation,
$$
   \gamma_0 = (id,T)_{\#}\rho.
$$
\eexam

\bexam (generalized Legendre transform for Coulomb cost) Let $h(x)=k(|x-y|)$ with
$k(\lambda)=\lambda^{-1}$ for $\lambda>0$ and $+\infty$ for $\lambda\le
0$. Note that $k \, : \, \R\to\R\cup\{+\infty\}$
is lower semi-continuous and convex. The ordinary Legendre transform of
$k$ is given by
$k^*(s)=\sup_{\beta\in\R}\{\beta s - k(\beta)\}$. Since $k(\beta)=+\infty$
for $\beta\le 0$, negative values of $\beta$ do not contribute to the
above supremum (the term in brackets then being $-\infty$). Consequently,
$$
   k^*(s) = \sup_{\beta > 0} \{\beta s - k(\beta)\}.
$$
When $s>0$, we infer $k^*(s)=+\infty$. It thus remains to calculate
$$
   k^*(-s) = \sup_{\beta > 0}\{-\beta s - k(\beta)\},\quad s\ge 0.
$$
Recall now that $k(\beta)=1/\beta$. The elementary calculus problem of
maximising
the function in brackets on $(0,\infty)$ has the unique solution
$\beta=1/\sqrt{s}$, whence
$k^*(-s)=-2\sqrt{s}$ for $s\ge 0$. Altogether it follows that the
generalized Legendre transform
of $h$ is
\be \label{legtrf}
    h^*(x) = - 2 \sqrt{|x|},\quad x\in\R^d.
\ee
Consequently, if $c(x,y)=\frac1{|x-y|}$ then the optimal map $T$ in Theorem \ref{uniqgeom} is of the form
$T(x) = x + \frac{\nabla\psi(x)}{|\nabla\psi(x)|^{3/2}}$.
\eexam

Subsections 3.1-3.4 are devoted to the proof of Theorem \ref{uniqgeom}. The proofs follow partially the proofs in
\cite{GM96}, \cite{KM07}, \cite{GM95} and \cite{GO07}. In the final subsection, 3.5, we derive some general properties of the unique optimal measure
and the optimal cost under the assumption of equal marginals, $\mu=\nu$.

\subsection{Definitions and notation}

In the following we present some definitions which are needed throughout the remainder of the section.

\begin{defn}\label{ctransf}
\begin{enumerate}
\item[$(1)$]
Let $V\subset\R^d$. A $c$-concave function $\psi: \R^d\rightarrow \R\cup\{-\infty\}$ is said to be the {\em $c$-transform} on $V$ of a function $\phi$ if (\ref{cconcave}) holds with $A\subset V\times\R$. Moreover,
$$\psi(x)=\inf_{y\in V}\{c(x,y)-\phi(y)\},$$
for some function $\phi:V\rightarrow\R\cup\{-\infty\}$.
\item[$(2)$]
The $c$-transform of a function $\psi :\R^d\rightarrow\R\cup\{-\infty\}$ is the function $\psi^c:\R^d\rightarrow\R\cup\{-\infty\}$ defined by
$$\psi^c(y)=\inf_{x\in\R^d}\{c(x,y)-\psi(x)\}.$$
\item[$(3)$]
A subset $S\subset X\times Y$ is called {\em $c$-cyclically monotone}, if for any finite number of points $(x_j,y_j)\in S, j=1,\ldots,n$ and permutations
$\sigma \, : \, \{1,..,n\}\to\{1,..,n\}$
\begin{equation}
\label{cycleqn}
\sum_{j=1}^n c(x_j,y_j)\le\sum_{j=1}^n c(x_{\sigma(j)},y_j).
\end{equation}
\end{enumerate}
\end{defn}

\begin{remark}
Remark~3.4 of \cite{GO07} proves the following: If $\psi:\bbr^d\to\R\cup\{-\infty\}$ is not identically $-\infty$, and is given by \eqref{cconcave}, then we have\\
(i) $\psi(y)\ge -\la>-\infty$ for all $(y,\la)\in A$, where $A$ is the set in \eqref{cconcave}. Hence, $\psi^c$ is not identically $-\infty$.\\
(ii) $\psi^{cc}=\psi$.
\end{remark}

\begin{theorem}[optimal measures have c-cyclically monotone support: Proposition 3.2 of \cite{GO07}]\label{3.2}
Assume that $X,Y\in\R^d$ are closed sets, that $\mu\in\calp(X)$,
$\nu\in\calp(Y)$ and that $c\ge 0$ is lower semicontinuous on $X\times Y$.
Then the following hold:
\begin{enumerate}
\item [{\rm (a)}] There is at least one optimal measure $\gamma\in\Gamma(\mu,\nu)$.
\item [{\rm (b)}] Suppose that in addition $c\in C(X\times Y,\bar\R)$. Unless $C\equiv +\infty$ throughout $\Gamma(\mu,\nu)$, there is a $c$-cyclically monotone set $S\subset \R^d\times\R^d$ containing the support
of all optimal measures in $\Gamma(\mu,\nu)$.
\end{enumerate}
\end{theorem}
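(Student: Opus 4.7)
\emph{Part (a).} I would apply the direct method of the calculus of variations. The set $\Gamma(\mu,\nu)$ is nonempty ($\mu\otimes\nu$ belongs) and weak-$*$ compact: tightness reduces to tightness of the individual marginals (given $\eps>0$, pick compacts $K_\mu,K_\nu$ with $\mu(X\setminus K_\mu)+\nu(Y\setminus K_\nu)<\eps$, so $\gamma(K_\mu\times K_\nu)\ge 1-\eps$ for every $\gamma\in\Gamma(\mu,\nu)$), and the marginal constraints survive weak-$*$ limits by testing against continuous bounded functions of a single variable. Since $c\ge 0$ is lower semicontinuous, write $c=\sup_n c_n$ as an increasing supremum of continuous bounded nonnegative functions; by monotone convergence $\gamma\mapsto\int c\,d\gamma$ is the supremum of weak-$*$ continuous functionals, hence weak-$*$ lower semicontinuous. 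Any minimizing sequence thus has a subsequence whose limit attains the infimum.

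\emph{Part (b), Step 1 -- cyclic monotonicity of each optimal support.} Assume $c\in C(X\times Y,\bar\R)$ and $C\not\equiv+\infty$, and argue by contradiction. Suppose $(x_j,y_j)\in\mathrm{supp}(\gamma)$, $j=1,\ldots,n$, are distinct and some permutation $\sigma$ gives a strict violation $\sum_j c(x_{\sigma(j)},y_j)+3\delta<\sum_j c(x_j,y_j)$ with $\delta>0$ and all terms finite. By continuity of $c$ and distinctness of the points, choose pairwise disjoint neighborhoods $U_j$ of $x_j$ and $V_j$ of $y_j$ so small that $c$ oscillates by at most $\delta/n$ on each of $U_j\times V_j$ and $U_{\sigma(j)}\times V_j$. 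Since $(x_j,y_j)\in\mathrm{supp}(\gamma)$, $m_j:=\gamma(U_j\times V_j)>0$. Let $\pi_j:=m_j^{-1}\gamma|_{U_j\times V_j}$ with marginals $\pi_j^X,\pi_j^Y$, and define the rerouted plans $\tilde\pi_j:=\pi_{\sigma(j)}^X\otimes\pi_j^Y$. For $0<\eps\le\min_j m_j$ the competitor
\beq
\gamma_\eps:=\gamma-\eps\sum_{j=1}^n\pi_j+\eps\sum_{j=1}^n\tilde\pi_j
\eeq
is a nonnegative Radon measure in $\Gamma(\mu,\nu)$: disjointness of the boxes and $\eps\le m_j$ give nonnegativity, while the marginal cancellation in $\sum_j(\tilde\pi_j-\pi_j)$ rests on $\sigma$ being a bijection on $\{1,\ldots,n\}$. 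Its cost satisfies $C[\gamma_\eps]-C[\gamma]<-\eps\delta$ by the uniform control of $c$ on the boxes, contradicting optimality of $\gamma$.

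\emph{Part (b), Step 2 -- one $c$-cyclically monotone set containing all optimal supports.} Define $S:=\bigcup_{\gamma\text{ optimal}}\mathrm{supp}(\gamma)$. Given any finite collection $(x_i,y_i)\in S$, $i=1,\ldots,n$, each lies in $\mathrm{supp}(\gamma^{(i)})$ for some optimal $\gamma^{(i)}$. The convex combination $\gamma^*:=\frac1n\sum_{i=1}^n\gamma^{(i)}\in\Gamma(\mu,\nu)$ has cost $\frac1n\sum_i C[\gamma^{(i)}]=\min C$, hence is optimal, with support equal to $\bigcup_i\mathrm{supp}(\gamma^{(i)})$, which contains all the $(x_i,y_i)$. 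Step~1 applied to $\gamma^*$ yields the desired permutation inequality for the chosen points.

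\emph{Main obstacle.} The delicate construction is $\gamma_\eps$ in Step~1: it must simultaneously be nonnegative, preserve both marginals, and strictly lower the cost. The product form $\tilde\pi_j=\pi_{\sigma(j)}^X\otimes\pi_j^Y$ combined with the bijection property of $\sigma$ drives the marginal cancellation; disjointness of the boxes together with $\eps\le\min_j m_j$ gives nonnegativity; and continuity of $c$ is the ingredient that lets one localize the discrete cyclic-monotonicity violation to the small boxes $U_j\times V_j$ via the oscillation bound. This is precisely why part~(b), unlike part~(a), upgrades the hypothesis on $c$ from lower semicontinuity to continuity.
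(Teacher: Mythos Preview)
The paper does not supply its own proof of this theorem; it is quoted from Gangbo--Oliker [GO07] and used as a black box in what follows. Your argument is the standard one (essentially that of [GM96, \S 2] and of the textbooks) and is correct in outline, so there is nothing to compare against.

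Two small imprecisions are worth flagging, the first because the paper's cost takes the value $+\infty$ on the diagonal. In Step~1 you write ``all terms finite'', but a violation of \eqref{cycleqn} only forces the right-hand side $\sum_j c(x_{\sigma(j)},y_j)$ to be finite; some $c(x_j,y_j)$ may well equal $+\infty$ even though $(x_j,y_j)\in\mathrm{supp}\,\gamma$ and $C[\gamma]<\infty$---indeed this is exactly what Remark~\ref{diagmonzero} later rules out \emph{by invoking} part~(b). Your oscillation bound on $U_j\times V_j$ then fails, but the rerouting still lowers the cost: by continuity of $c$ in $\bar\R$ one has $c>M$ on a small enough $U_j\times V_j$, whence $\int c\,d\pi_j\ge M$ can be made as large as desired while each $\int c\,d\tilde\pi_k$ stays bounded. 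Second, requiring ``pairwise disjoint $U_j$ and $V_j$'' is more than distinctness of the pairs $(x_j,y_j)$ gives you (two pairs may share an $x$-coordinate); what you actually need for nonnegativity of $\gamma_\eps$, and what distinctness of the pairs does guarantee, is pairwise disjointness of the product boxes $U_j\times V_j$. Finally, in Step~2 the support of $\gamma^*$ is the \emph{closure} of $\bigcup_i\mathrm{supp}\,\gamma^{(i)}$; only the inclusion you use is needed and that holds.
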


\begin{defn}
\begin{enumerate}
\item[$(1)$]
A function $\psi:\R^d\rightarrow\R\cup\{-\infty\}$ is {\em superdifferentiable at $x\in\R^d$}, if $\psi(x)$ is finite and there exists $y\in\R^d$ such that
\begin{equation}
\label{supp}
\psi(x+z)\le\psi(x)+<z,y>+o(|z|) \mbox{ as }|z|\to 0;
\end{equation}
here $o(\lambda)$ means terms $\eta(\lambda)$ such that $\eta(\lambda)/\lambda$ tends to zero with $\lambda$.
\item[$(2)$]
A pair $(x,y)$ {\em belongs to the superdifferential $\partial^\cdot\psi\subset \R^d\times \R^d$ of $\psi$}, if $\psi(x)$ is finite and (\ref{supp}) holds, in which case $y$ is called a {\em supergradient of $\psi$ at $x$}.
Such supergradients $y$ comprise the set $\partial^\cdot\psi(x)\subset\R^d$, while for $V\subset\R^d$ we define $\partial^\cdot\psi(V):=\cup_{x\in V}\partial^\cdot\psi(x)$.
\item[$(3)$]
The analogous notions of {\em subdifferentiability}, {\em subgradients} and the {\em subdifferential} $\partial_\cdot\psi$ are defined by reversing inequality (\ref{supp}).
\item[$(4)$]
A real-valued function $\psi$ will be differentiabale at $x$ precisely if it is both super- and subdifferentiable there; then
$$\partial^\cdot\psi(x)=\partial_\cdot\psi(x)=\{\nabla\psi(x)\}.$$
\end{enumerate}
\end{defn}

\begin{defn}
The {\em $c$-superdifferential} $\partial^c\psi$ of $\psi:\R^d\rightarrow \R\cup\{-\infty\}$, not identical $-\infty$, consists of the pairs $(x,y)\in \R^d\times\R^d$ for which $c(x,y)-\psi(x)\leq c(z,y)-\psi(z)$ for all $z\in \R^d$.
\end{defn}

\begin{lemma}[relating $c$-superdifferentials to subdifferentials: Lemma 3.1 of \cite{GM96}]
\label{difsubdif}
Let $h:\R^d\rightarrow\R^d$ and $\psi:\R^d\rightarrow\R\cup\{-\infty\}$. If $c(x,y)=h(x-y)$, then $(x,y)\in\partial^c\psi$ implies $\partial.\psi(x)\subset\partial.h(x-y)$.
When $h$ and $\psi$ are differentiable, then $\nabla\psi(x)=\nabla h(x-y)$.
\end{lemma}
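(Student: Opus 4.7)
The plan is to extract the inequality defining the $c$-superdifferential and translate it into a subdifferential inequality for $h$ via a change of variables, after which plugging in any subgradient of $\psi$ at $x$ yields a subgradient of $h$ at $x-y$.

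First I would unwind the hypothesis $(x,y)\in\partial^c\psi$: by definition this means
$$h(x-y)-\psi(x)\le h(z-y)-\psi(z) \quad \text{for all } z\in\R^d,$$
since $c(x,y)=h(x-y)$. Rearranging and substituting $z=x+w$ gives the key inequality
$$\psi(x+w)-\psi(x)\le h\bigl((x-y)+w\bigr)-h(x-y) \quad \text{for all } w\in\R^d.$$

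Next I would take an arbitrary $p\in\partial_\cdot\psi(x)$. By the definition of the subdifferential (the reversed inequality in (\ref{supp})),
$$\psi(x+w)-\psi(x)\ge \langle w,p\rangle +o(|w|) \quad \text{as } |w|\to 0.$$
Chaining the two displayed inequalities yields
$$h\bigl((x-y)+w\bigr)-h(x-y)\ge \langle w,p\rangle +o(|w|) \quad \text{as } |w|\to 0,$$
which is exactly the statement $p\in\partial_\cdot h(x-y)$. This establishes the inclusion $\partial_\cdot\psi(x)\subset\partial_\cdot h(x-y)$.

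For the differentiability claim, I would use the fact recorded in the previous definition: when $\psi$ is differentiable at $x$ and $h$ is differentiable at $x-y$, both subdifferentials are singletons, namely $\partial_\cdot\psi(x)=\{\nabla\psi(x)\}$ and $\partial_\cdot h(x-y)=\{\nabla h(x-y)\}$. The inclusion just proved then forces equality $\nabla\psi(x)=\nabla h(x-y)$. There is no real obstacle here; the argument is essentially a change-of-variables manipulation of the $c$-superdifferential condition, and the only subtlety is keeping the roles of super- and subdifferentials straight (the $c$-superdifferential inequality for $\psi$ propagates a lower bound to $h$, hence produces subgradients of $h$).
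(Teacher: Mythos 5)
Your proof is correct and is essentially the argument of the cited Lemma 3.1 in [GM96] (the paper itself only quotes the statement without reproducing the proof): unwind the $c$-superdifferential inequality, substitute $z=x+w$, and chain with the subgradient lower bound for $\psi$ to obtain a subgradient of $h$ at $x-y$. The differentiability conclusion then follows exactly as you say, since both subdifferentials reduce to singletons.
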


\begin{defn}
A function $\psi:\R^d\rightarrow\R\cup\{-\infty\}$ is said to be {\em locally semi-concave} {\em (locally semi-convex)} at $p\in\R^d$, if there is a constant $\lambda<\infty$, which makes $\psi(x)-\lambda|x|^2$ concave {\em (convex)} on some (small) open ball centered at $p$.
\end{defn}

\begin{remark}\noindent
\label{diagzero}
Suppose that $\mu\in\calp(X)$ and $\nu\in\calp(Y)$ have no atoms and that $\gamma^*$ minimizes $C[\gamma]$ over $\Gamma(\mu,\nu)$ and that $C(\gamma^*)<\infty$. Then $\gamma^*(\Delta)=0$ and so, $supp~\gamma^*\setminus\Delta$ contains at least one element, say $(x_0,y_0)$. Also $\gamma^*(E)=0$, where $E=(x_0\times Y)\cup (X\times y_0)$. Hence the set $X\times Y\setminus (E\cap\Delta)$ is non-empty, so it contains an element $(\bar{x}_0,\bar{y}_0)$. Note that $x_0,y_0\notin \{\bar{x}_0,\bar{y}_0\}$.
\end{remark}
We will need the non-atomic property of the marginal measures $\mu$ and $\nu$ (and the resulting remark above) from the uniqueness section~\ref{uniq} onwards, as a means to bypass the singularity of $c$ on the diagonal. We will use Remark \ref{diagzero} in Lemma \ref{newsupdif} and in Lemma \ref{semiconc} below.

\begin{remark}
\label{diagmonzero}
Suppose that $S\subset X\times Y$ is $c$-cyclically monotone and contains two pairs $(x_0,y_0), (\bar{x}_0,\bar{y}_0)$ such that $x_0\neq y_0$, $\bar{x}_0\neq\bar{y}_0$ and $\bar{x}_0,\bar{y}_0\notin \{x_0, y_0\}$. Then for all $(x,y)\in S$, we have $x\neq y$.
\end{remark}
\begin{proof}
Let us assume that $(x,y)\in S$, with $x=y$.
There are the following possibilities to consider:  $x=y=x_0$,  $x=y=y_0$, $x=y={\bar x}_0$ $x=y={\bar y}_0$ and $x,y\notin\{x_0, y_0,\bar x_0, \bar y_0\}$.

We present a proof for one of the cases, the other cases being treated analogously.
Consider $x=y=y_0$. Then $x,y\notin\{x_0,\bar{x}_0,\bar{y}_0\}$ and from (\ref{cycleqn}), we get
$$c(x,y)\le c(x,{\bar y}_0)+c(x_0,y)+c({\bar x}_0,y_0)-c(x_0,y_0)-c({\bar x}_0,{\bar y}_0)<+\infty,$$
which leads to a contradiction as $c(x,y)=+\infty$.
\end{proof}

\subsection{Existence of an optimal measure with $c$-cyclical monotone support}

The main issue in this section is not the existence of an optimal measure, as the existence of an optimal measure is assured by Theorem
\ref{3.2} (a), but the existence of an optimal measure with $c$-cyclical monotone support. In order to use Theorem \ref{3.2} (b) and
construct such an optimal measure, we need to first construct a joint measure $\gamma$, with marginals $\mu$ and $\nu$ and with
$C[\gamma]<\infty$. This is done in the Lemma below.

\begin{lemma}
\label{exist1}
Suppose that $\mu,\nu\in\calp(\R^d)$ and are absolutely continuous with respect to the Lebesgue measure. Then there exists $\gamma\in\Gamma(\mu,\nu)$ and $\epsilon>0$ such that for all $(x,y)\in supp~\gamma$,
$$|x-y|>\epsilon$$
\end{lemma}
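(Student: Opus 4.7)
My plan is a discretization argument. The idea is to partition $\R^d$ into a fine cubic grid, to set up a doubly-stochastic matching problem at the level of the cubes in which short-range couplings are forbidden, and finally to assemble the cube-level flow into a coupling of $\mu$ and $\nu$ with support bounded away from the diagonal.

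Fix a parameter $\epsilon>0$ to be chosen. Partition $\R^d$ into the half-open cubes $Q_i:=(\epsilon/2)(i+[0,1)^d)$ for $i\in\Z^d$, and write $m_i:=\mu(Q_i)$ and $n_j:=\nu(Q_j)$. Note that if $\|i-j\|_\infty\ge 2$ then $\mbox{dist}(Q_i,Q_j)\ge \epsilon/2$. The plan is to find a family $(p_{ij})_{i,j\in\Z^d}$ of non-negative reals with row-sums $m_i$, column-sums $n_j$, and the constraint $p_{ij}=0$ whenever $\|i-j\|_\infty\le 1$. Given such a family, the measure
$$\gamma:=\sum_{\|i-j\|_\infty\ge 2,\ m_i n_j>0}\frac{p_{ij}}{m_i\, n_j}\,\mu|_{Q_i}\otimes\nu|_{Q_j}$$
lies in $\Gamma(\mu,\nu)$ by the row- and column-sum conditions, and its support is contained in $\bigcup_{p_{ij}>0}Q_i\times Q_j$, which stays at distance at least $\epsilon/2$ from the diagonal.

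Feasibility of $(p_{ij})$ reduces to a Hall/marriage condition: for every $I\subset\Z^d$ one needs $\sum_{i\in I} m_i\le \sum_{j\in J(I)} n_j$, where $J(I):=\{j:\exists\, i\in I\mbox{ with }\|i-j\|_\infty\ge 2\}$. The key observation is that $J(I)^c=\bigcap_{i\in I}\{j:\|i-j\|_\infty\le 1\}$ is empty unless $I$ sits inside some translate of $\{-1,0,1\}^d$, in which case both $\bigcup_{i\in I}Q_i$ and $\bigcup_{j\in J(I)^c}Q_j$ lie in axis-parallel cubes of side at most $2\epsilon$. Since $\mu$ and $\nu$ are absolutely continuous with respect to Lebesgue measure, the $\mu$- and $\nu$-masses of any such cube tend to zero as $\epsilon\to 0$ uniformly in its location (all such cubes having the same Lebesgue volume). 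Hence for $\epsilon$ small enough, both sums are $\le 1/2$, so $\sum_{i\in I}m_i\le 1-\sum_{j\in J(I)^c}n_j=\sum_{j\in J(I)}n_j$; the other case $J(I)=\Z^d$ is trivial.

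The principal technical hurdle is that Hall's theorem has to be invoked in a countable setting. I would circumvent this by first truncating to a large ball $B_R$ containing all but a small fraction of the mass of both $\mu$ and $\nu$, running the finite-dimensional Hall argument on the cubes inside $B_R$, and coupling the small residual masses $\mu|_{B_R^c}$ and $\nu|_{B_R^c}$ to each other (and to far-away portions of the interior mass) by an explicit block coupling that is automatically off-diagonal once $R$ is large.
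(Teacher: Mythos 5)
Your grid-and-Hall strategy is genuinely different from the paper's argument, and its combinatorial core is sound: the observation that the blocking set $J(I)^c=\bigcap_{i\in I}\{j:\|i-j\|_\infty\le 1\}$ is empty unless both $I$ and $J(I)^c$ fit inside boxes of side $O(\epsilon)$, combined with the uniform smallness of the $\mu$- and $\nu$-masses of such boxes (uniform absolute continuity of the integral of an $L^1$ density), does verify the supply--demand condition for every $I$; the reassembly of $(p_{ij})$ into a plan $\gamma\in\Gamma(\mu,\nu)$ supported at distance $\ge\epsilon/2$ from the diagonal is also correct. If the index set were finite, Gale's feasibility theorem for transportation problems with forbidden cells would finish the proof.

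The gap is exactly where you place it, but the repair you sketch does not work as stated. After truncating to $B_R$ the masses $\mu(B_R)$ and $\nu(B_R)$ are in general unequal, so the interior transportation problem is unbalanced; more seriously, a coupling of the residuals $\mu|_{B_R^c}$ and $\nu|_{B_R^c}$ with each other is \emph{not} ``automatically off-diagonal once $R$ is large'': both tails live on the same unbounded set, and a product coupling of them charges every neighbourhood of the diagonal where the two tails overlap, no matter how large $R$ is. To close the argument you must either (i) route each tail to mass carved out of the deep interior of the other measure (e.g.\ pair $\mu|_{B_R^c}$ with a piece of $\nu|_{B_{R/2}}$ of the same total mass, and conversely) and then rebalance the remaining interior problem, or (ii) invoke a genuinely countable feasibility statement --- a weak-limit/compactness argument over increasing finite truncations, or Strassen's theorem on the existence of couplings concentrated on the closed set $\{(x,y):|x-y|\ge\epsilon/2\}$, whose hypothesis is verified by exactly your small-box computation. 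It is instructive to compare with the paper's proof, which resolves precisely this global balancing issue in one stroke: by the intermediate value theorem one chooses a radius $b_0$ with $\mu(S(0;b_0))=\nu(S^c(0;b_0))$, sends the inner part of $\mu$ to the outer part of $\nu$ and vice versa, and separates each of the two resulting inside-to-outside problems by shaving thin shells off the sphere $|x|=b_0$. That route needs no combinatorics and no limit over truncations; yours, once the tail issue is repaired, is a perfectly viable and more ``local'' alternative.
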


\begin{proof}
The proof follows similar arguments as the proof of Proposition 4.1 from \cite{GO07} adapted to our situation.

Let $b,c\in\R_+$ and let $S(0;b):=\{x\in\R^d:|x|\le b\}$, $S^c(0;b):=\{x\in\R^d:|x|>b\}$ and $S(b;c):=\{x\in\R^d:b<|x|\le c\}$. Since $\mu$ and $\nu$ are absolutely continuous with respect to the Lebesgue measure, the functions
\begin{equation}
\label{cont}
t\mapsto\mu|_{S(0;t)},~t\mapsto\nu|_{S(0;t)},~t\mapsto\mu|_{S^c(0;t)},~t\mapsto\nu|_{S^c(0;t)}
\end{equation}
are continuous.
\\
\textbf{Step 1.}
We assume first that there exists $b\in\R_+$ such that $supp~\mu\subset S(0;b)$ and $supp~\nu\subset S^c(0;b)$. Then, because of (\ref{cont}), we may choose $\epsilon_1,\epsilon_2>0$ such that
$$\mu(S(0;b-\epsilon_1))=\nu(S(b;b+\epsilon_2))=\frac{1}{2}~\mbox{with}~\epsilon_1<b.$$
Let
$$\mu^{-}=\mu|_{S(0;b-\epsilon_1)},~\mu^{+}=\mu|_{S(b-\epsilon_1;b)},~\nu^{-}=\nu|_{S(b:b+\epsilon_2)}~\mbox{and}~\nu^{+}=\nu|_{S^c(0;b+\epsilon_2)}.$$
Set
$$\gamma:=2(\mu^{-}\otimes\nu^{-}+\mu^{+}\otimes\nu^{+}).$$
Note that $\gamma\in\Gamma(\mu,\nu)$ and for all $(x,y)\in supp~\gamma$, we have
$$|x-y|\ge\min\{\epsilon_1,\epsilon_2\}>0.$$
\\
\textbf{Step 2.} Assume that $\mu$ and $\nu$ are arbitrary in $\calp(\R^d)$.
We use (\ref{cont}) to choose $b\in\R_+$ such that
\begin{equation}
\label{case2}
\mu(S(0;b))=\nu(S^c(0;b))=m.
\end{equation}
If $m=0$, then we reduce the discussion to Step~1. Similarly if $m=1$.
We can therefore assume that $0<m<1$.
More precisely, if we denote by $f(b):=\mu(S(0;b))-\nu(S^c(0;b))$ for all $b\in\R_+$, then $f(b)$ is an increasing and continuous function of $b$, going from negative values to positive ones as $b$ goes from $0$ to $+\infty$.
Therefore, there exists $b_0\in\R_+$ such that $f(b_0)=0$, which is equivalent to $\mu(S(0;b_0))=\nu(S^c(0;b_0))$.  Set
$$\mu^{-}=\mu|_{S(0;b_0)},~\mu^{+}=\mu|_{S^c(0;b_0)},~\nu^{-}=\nu|_{S(0;b_0)}~\mbox{and}~\nu^{+}=\nu|_{S^c(0;b_0)}.$$
By (\ref{case2}), $\frac{\mu^{-}}{m}$ and $\frac{\nu^{+}}{m}$ are probability measures. They satisfy
$$supp~\mu^{-}\subset S(0;b_0)~\mbox{and}~supp~\nu^{+}\subset S^c(0;b_0).$$
Therefore, they satisfy the assumptions of Step~1, so there exists $\delta_1>0$ and $\gamma_1\in\Gamma(\frac{\mu^{-}}{m},\frac{\nu^{+}}{m})$ such that for all $(x,y)\in supp~\gamma_1$, we have
$$|x-y|>\delta_1.$$
Similarly, there exists $\delta_2>0$ and $\gamma_2\in\Gamma(\frac{\mu^{+}}{1-m},\frac{\nu^{-}}{1-m})$ such that for all $(x,y)\in supp~\gamma_2$, we have
$$|x-y|>\delta_2.$$
Set $\gamma:=m\gamma_1+(1-m)\gamma_2.$
Then $\gamma\in\Gamma(\mu,\nu)$ and for all $(x,y)\in supp~\Gamma(\mu,\nu)$, we have
$$|x-y|\ge\min\{\delta_1,\delta_2\}.$$
\end{proof}

\begin{theorem}[existence of optimal measure with $c$-cyclical monotone support]
\label{exist}
Assume that $c(x,y)=h(x-y):=l(|x-y|)$ satisfies (A1)--(A4) and
let $\mu,\nu\in\calp(\R^d)$ be absolutely continuous with respect to the Lebesgue measure. Then there exists a measure $\gamma_0$
with $c$-cyclically monotone support which minimizes the functional $C(\gamma)$ introduced in (\ref{Cofgamma}) over $\Gamma(\mu,\nu)$.
\end{theorem}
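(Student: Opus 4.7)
The plan is to invoke Theorem \ref{3.2} directly. Part (a) already yields an optimal measure $\gamma_0 \in \Gamma(\mu,\nu)$ from the lower semi-continuity assumption (A3) alone, so the real work lies in verifying the two hypotheses of part (b): (i) extended continuity $c \in C(\R^d \times \R^d, \bar\R)$, and (ii) that $C \not\equiv +\infty$ on $\Gamma(\mu,\nu)$. Granted these, part (b) produces a $c$-cyclically monotone set $S \supset supp~\gamma_0$; since any subset of a $c$-cyclically monotone set is itself $c$-cyclically monotone, $supp~\gamma_0$ is already $c$-cyclically monotone, which is the desired conclusion.

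For (i), off the diagonal $\Delta$ assumption (A2) gives $c \in C^1$, hence continuous. At a diagonal point $(x_0,x_0)$, (A4) gives $c(x_0,x_0) = +\infty$, so what needs checking is that $c(x,y) \to +\infty$ as $(x,y) \to (x_0,x_0)$. This is immediate from (A3): lower semi-continuity forces $\liminf c(x,y) \geq c(x_0,x_0) = +\infty$, so the limit exists and equals $+\infty$. Equivalently, (A1) makes $l$ convex on $[0,\infty]$ with $l(0) = +\infty$; since convex functions are lower semi-continuous, $l(\lambda) \to +\infty$ as $\lambda \downarrow 0$, which is exactly the extended-continuity condition at the diagonal.

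For (ii), apply Lemma \ref{exist1} to obtain $\gamma \in \Gamma(\mu,\nu)$ and $\epsilon > 0$ with $|x - y| > \epsilon$ for all $(x,y) \in supp~\gamma$. Because $l$ is strictly decreasing on $(0,\infty)$ by (A1), on $supp~\gamma$ we have $c(x,y) = l(|x-y|) \leq l(\epsilon) < +\infty$, so
\[
C[\gamma] = \int c(x,y)\, d\gamma(x,y) \leq l(\epsilon) < +\infty.
\]
Thus $C$ is finite on at least one transport plan, and Theorem \ref{3.2}(a)(b) applies to complete the proof. The main conceptual point is to recognise that the singular cost $c$, despite being infinite on $\Delta$, genuinely satisfies extended continuity; the substantive separate input is Lemma \ref{exist1}, where absolute continuity of $\mu$ and $\nu$ is crucially exploited to build a transport plan whose support avoids a neighbourhood of $\Delta$.
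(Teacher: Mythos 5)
Your proposal is correct and follows essentially the same route as the paper: the substantive input is Lemma \ref{exist1} producing a plan supported away from the diagonal, hence of finite cost $\le l(\epsilon)$, after which Theorem \ref{3.2}(b) delivers the $c$-cyclically monotone support; you merely spell out the extended-continuity hypothesis that the paper leaves implicit, and your primary justification of it via (A3) (lower semi-continuity at a point where $c=+\infty$ forces the limit to be $+\infty$) is valid. One caveat: your parenthetical ``equivalently'' remark is wrong as stated --- a convex $l$ with $l(0)=+\infty$ need not be lower semi-continuous at $0$ nor satisfy $l(\lambda)\to+\infty$ as $\lambda\downarrow 0$ --- but this is harmless since (A3) is assumed outright.
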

\begin{proof}
By Lemma~\ref{exist1}, there exists $\gamma\in\Gamma(\mu,\nu)$ and $\epsilon>0$ such that
\begin{equation}
\label{37}
|x-y|>\epsilon
\end{equation}
for all $x,y\in supp~\gamma$. Since $l$ is strictly decreasing on $(0,\infty)$, (\ref{37}) together with (A1) ensure that $c$ is uniformly bounded from above on $supp~\gamma$ by $l(\epsilon)$. This proves
that $C[\gamma]<\infty$. The statement follows now immediately from Theorem~\ref{3.2}.
\end{proof}

\subsection{Geometrical characterization of the optimal measure}
\label{uniq}

It is well known that a set is cyclically monotone if and only if it is contained in the subdifferential of a $c$-concave function; this result was proved for general cost functions $c : X\times Y\rightarrow\R$ in \cite{SK92}.
The following theorem is a further extension that is needed to deal with cost functions which satisfy (A1)-(A3) (and are allowed to take the value $+\infty$ somewhere).

\begin{lemma}\label{newsupdif} Suppose that $X,Y\subset \R^d$ are closed sets.
\begin{itemize}
\item [(1)] 
For $S\subset X\times Y$ to be c-cyclically monotone, it is necessary and sufficient that $S\subset\partial^c\psi$ for some $c$-concave $\psi:X\rightarrow \R\cup\{-\infty\}$.
\item [(2)] Suppose that $S\subset X\times Y$ is $c$-cyclically monotone and contains two pairs $(x_0,y_0), (\bar{x}_0,\bar{y}_0)$ such that $\bar{x}_0,\bar{y}_0\notin \{x_0, y_0\}$. Let $\psi :\R^d \rightarrow \R\cup\{-\infty\}$ be the $c$-concave function from (1).
Then
\begin{itemize}
\item [(2a)] $\psi(x_0)$ and $\psi(\bar{x}_0)$ are finite,
\item [(2b)] whenever $(x,y)\in\partial^c\psi$, we have that $\psi(x)>-\infty$ and $x\neq y$,
\item [(2c)] we have $\mu$-a.e.
$$\psi(x)=\inf_{y\in Y}\{c(x,y)-\psi^c(y),x\neq y\},\quad x\in X.$$
\end{itemize}
\end{itemize}
\end{lemma}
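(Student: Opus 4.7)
My approach is to prove (1) via the classical Rockafellar-type construction of a $c$-concave potential from a $c$-cyclically monotone set, and then to exploit the two off-diagonal pairs in (2) to bypass the singularity of $c$ along the diagonal.

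For (1), sufficiency is immediate: if $S\subset\partial^c\psi$, then for any $(x_j,y_j)\in S$ and permutation $\sigma$, summing the defining inequalities $c(x_j,y_j)-\psi(x_j)\le c(x_{\sigma(j)},y_j)-\psi(x_{\sigma(j)})$ yields \eqref{cycleqn} once the $\psi$-terms cancel. For necessity, I would invoke Remark~\ref{diagzero} to pick a base pair $(x_0,y_0)\in S$ with $x_0\ne y_0$ (so $c(x_0,y_0)<\infty$) and set
$$
\psi(x):=\inf\Big\{c(x,y_n)-c(x_n,y_n)+\sum_{i=1}^{n-1}\bigl[c(x_{i+1},y_i)-c(x_i,y_i)\bigr]+c(x_1,y_0)-c(x_0,y_0)\Big\},
$$
the infimum being over $n\ge 1$ and chains $(x_1,y_1),\ldots,(x_n,y_n)\in S$. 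As an infimum of functions $x\mapsto c(x,y_n)+\mathrm{const}$, $\psi$ is $c$-concave. The inclusion $S\subset\partial^c\psi$ follows by the standard chain-extension argument: for $(x,y)\in S$ and $z\in\R^d$, appending $(x,y)$ to a near-optimal chain for $\psi(x)$ produces a chain showing $\psi(z)\le\psi(x)+c(z,y)-c(x,y)$.

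For (2a), use the $\psi$ from (1); by Remark~\ref{diagmonzero}, every $(x_i,y_i)\in S$ has $x_i\ne y_i$, so all chain costs are finite. The length-one chain $(x_1,y_1)=(x_0,y_0)$ gives $\psi(x_0)\le 0$, and $c$-cyclical monotonicity of $S$ applied to any candidate chain gives $\psi(x_0)\ge 0$. Likewise $\psi(\bar x_0)\le c(\bar x_0,y_0)-c(x_0,y_0)<\infty$ since $\bar x_0\ne y_0$. The main technical point, and the step I expect to be the principal obstacle, is the lower bound $\psi(\bar x_0)>-\infty$: I would apply $c$-cyclical monotonicity to the augmented chain $(x_0,y_0),(x_1,y_1),\ldots,(x_n,y_n),(\bar x_0,\bar y_0)$ under the cyclic shift permutation. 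After rearrangement, every chain defining $\psi(\bar x_0)$ is bounded below by $c(\bar x_0,\bar y_0)-c(x_0,\bar y_0)$, which is finite precisely because $x_0\ne\bar y_0$; this is where the full strength of the hypothesis $\bar x_0,\bar y_0\notin\{x_0,y_0\}$ is used.

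Parts (2b) and (2c) then follow from (2a). For (2b), $(x,y)\in\partial^c\psi$ requires $\psi(x)$ finite; if moreover $x=y$, then (A4) gives $c(x,y)=+\infty$, and the defining inequality forces $\psi(z)=-\infty$ whenever $z\ne y$. Since $x_0\ne\bar x_0$, at least one of them differs from $y$, contradicting (2a); hence $x\ne y$. For (2c), $\psi$ is $c$-concave and so $\psi=\psi^{cc}$ by the earlier remark citing \cite{GO07}, giving $\psi(x)=\inf_{y\in Y}\{c(x,y)-\psi^c(y)\}$. I claim $\psi^c$ is everywhere finite: if $\psi^c(x^*)=+\infty$ then $c(x^*,z)=+\infty$ for every $z$ with $\psi(z)>-\infty$, i.e.\ $x^*=z$, contradicting the existence of two distinct such points $x_0,\bar x_0$ from (2a). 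Hence the value at $y=x$ is $c(x,x)-\psi^c(x)=+\infty$, which is never attained in the infimum since $\psi(x)<+\infty$ $\mu$-a.e., so restricting the infimum to $y\ne x$ does not change its value.
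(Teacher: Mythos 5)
Your proof is correct and follows essentially the same route as the paper: the Rockafellar-type chain construction for (1), the normalization $\psi(x_0)=0$ together with the second off-diagonal pair (via $x_0\neq\bar y_0$) for (2a), and testing the superdifferential inequality against whichever of $x_0,\bar x_0$ differs from $y$ for (2b), with (2c) obtained from the construction/$\psi^{cc}=\psi$. One small wording point: with the paper's definition of $\partial^c\psi$, the finiteness of $\psi(x)$ in (2b) is not automatic from membership in $\partial^c\psi$ but is proved by exactly the same contradiction you use to rule out $x=y$, so it should be derived rather than asserted as a consequence of the definition.
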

\begin{proof}
Part (1) has been proved in Theorem~2.7 of \cite{GM96} or Lemma~2.1 of \cite{Ru96}.\\
(2) As in the proof of Lemma~2.1 in \cite{Ru96}, we define
\begin{equation}
\label{ctransfdef}
\psi(x)=\inf_{n\in\N}\inf_{\{(x_j,y_j)\}_{j=0}^n\subset S}\left\{c(x,y_n)+\sum_{j=0}^{n-1}c(x_{j+1},y_j)-\sum_{j=0}^nc(x_j,y_j)\right\},\quad x\in X.
\end{equation}
(2a) The construction above yields a $c$-concave $\psi$ on $X$ with $S\subset\partial^c\psi$. Standard arguments (see for example \cite{Ru96}) give that $\partial^c\psi$ is $c$-cyclically monotone in the sense that
\beam
\sum_{j=0}^n c(x_j,y_j)\le \sum_{j=0}^{n-1} c(x_{n+1},y_n)+ c(x_{n+1},y_n)\quad\mbox{with }x_{n+1}=x_0.
\eeam
and so $\psi(x_0)\ge 0$.
Taking $n=1$, $x_1=\bar x_0$ and $y_1=\bar y_1$ in (\ref{ctransfdef}) gives that $\psi(x_0)\le 0$. We conclude that $\psi(x_0)=0$ and so $\psi$ is finite and not identically $-\infty$. Note that by construction, $\psi$ is the $c$-concave function from (1).
Recall now that $(x,y)\in\partial^c\psi$ is equivalent to
\begin{equation}
\label{cconcequiv}
c(x,y)-\psi(x) \le c(z,y)-\psi(z),\quad z\in X.
\end{equation}
Now recall Remark~\ref{diagzero}.
Setting $(x,y)=({\bar x}_0, {\bar y}_0)$, $z=x_0$ in (\ref{cconcequiv}), using the fact that $\psi(x_0)=0$ and that $x_0\neq{\bar y}_0$, we obtain that $\psi({\bar x}_0)$ is finite.\\
(2b)
Next, if $(u_0,y)\in S$, setting $z=u_0$, we have that
\begin{equation}\label{29}
c(x,y)-\psi(x)\le c(u_0,y)-\psi(u_0).
\end{equation}
If $y\neq x_0$, we set $u_0=x_0$ in (\ref{29}) to obtain the claim.
If $y=x_0$, we set $u_0=\bar{x}_0$ and we use the fact that $\psi(\bar{x}_0)$ is finite, to obtain the claim.
\\
(2c) This representation is a simple consequence of the construction.
\end{proof}

We have proved the following. If we define
\begin{equation}
\partial^c_0\psi:=\{(x,y)\in\partial^c\psi\mid x\neq y\}
\quad\mbox{and}\quad
\partial^c_0\psi(x):=\partial^c\psi(x)\setminus\{x\},
\end{equation}
then $\partial_0^c\psi=\partial^c\psi$ $\mu$-a.e.
and we will focus on the off-diagonal elements from now on. We also assume for the rest of this chapter that $X=Y=\R^d$.

\begin{lemma}[$\mu$-a.e. differentiability of $c$-transforms]\label{semiconc}
Let $c$ and $l$ satisfy (A1)-(A4).
Then the function $\psi$ from Lemma \ref{newsupdif} is $\mu$-a.e. differentiable on $\R^d$.
\end{lemma}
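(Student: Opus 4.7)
The plan is to prove that $\psi$ is locally Lipschitz at $\mu$-almost every point, and then invoke Rademacher's theorem; since $\mu$ is absolutely continuous with respect to Lebesgue measure on $\R^d$, the resulting Lebesgue-a.e. differentiability transfers to $\mu$-a.e. differentiability.

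First I would fix the data. Let $\gamma^*\in\Gamma(\mu,\nu)$ be the optimal measure produced by Theorem~\ref{exist}, with $c$-cyclically monotone support $S$, and let $\psi$ be the $c$-concave function built in~\eqref{ctransfdef} so that $S\subset\partial^c\psi$ by Lemma~\ref{newsupdif}(1). Since $\gamma^*$ has first marginal $\mu$, the projection of $S$ onto the first coordinate has full $\mu$-measure, so for $\mu$-a.e.\ $x_0$ there exists $y_0=y_0(x_0)$ with $(x_0,y_0)\in S$; by Remark~\ref{diagmonzero}, $y_0\neq x_0$, and by construction $\psi(x_0)=c(x_0,y_0)-\psi^c(y_0)$, finite by Lemma~\ref{newsupdif}(2a). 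Additionally, I pick any reference point $z\neq x_0$ at which $\psi(z)$ is finite (available $\mu$-a.e.).

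The crucial step is to localize the representation $\psi(x)=\inf_{y\in\R^d}\{c(x,y)-\psi^c(y)\}$ from Lemma~\ref{newsupdif}(2c) so that $y$'s too close to $x$ can be discarded. Set $d_0:=|x_0-y_0|$ and $d_z:=|z-x_0|$. On $B(x_0,d_0/4)$ the upper bound $\psi(x)\le c(x,y_0)-\psi^c(y_0)=l(|x-y_0|)-\psi^c(y_0)$ gives a finite uniform ceiling $M$. Meanwhile, the general inequality $\psi^c(y)\le c(z,y)-\psi(z)$ yields, for $y$ with $|y-x_0|<d_z/2$, $\psi^c(y)\le l(d_z/2)-\psi(z)=:C_1$, hence
\begin{equation*}
c(x,y)-\psi^c(y)\ \ge\ l(|x-y|)-C_1 .
\end{equation*}
Since $l(s)\to+\infty$ as $s\to 0^+$ by (A1) and (A4), I can choose $\delta=\delta(x_0)>0$ so small that $l(\delta)-C_1>M$; then on the shrunk ball $B:=B(x_0,r_0)$ with $r_0:=\min(d_0/4,d_z/4,\delta/4)$, any $y$ with $|y-x|<\delta$ gives a value strictly greater than $\psi(x)$, so the infimum defining $\psi(x)$ on $B$ may be restricted to $\{y\in\R^d : |y-x|\ge\delta\}$.

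From this local representation the Lipschitz property follows by a direct comparison: for $x_1,x_2\in B$ and any $y_1$ almost-realizing $\psi(x_1)$ (hence with $|y_1-x_1|\ge\delta$), using $\psi(x_2)\le c(x_2,y_1)-\psi^c(y_1)$ gives
\begin{equation*}
\psi(x_2)-\psi(x_1)\ \le\ c(x_2,y_1)-c(x_1,y_1)+o(1)\ \le\ |l'(\delta/2)|\,|x_1-x_2|+o(1),
\end{equation*}
where the second inequality uses the mean value theorem together with $|x_i-y_1|\ge\delta/2$ for $i=1,2$ (guaranteed by $r_0\le\delta/4$) and the fact that $l'$ is non-positive and increasing on $(0,\infty)$ by (A1), so $|l'|$ is decreasing. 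Symmetry gives $\psi$ Lipschitz on $B$, and Rademacher's theorem then yields differentiability Lebesgue-a.e., hence $\mu$-a.e., on $B$; covering a $\mu$-full subset of $\R^d$ by such balls completes the proof. The main obstacle is the localization step: the infimum a priori picks up $y$'s arbitrarily close to $x$, where $|l'|$ blows up and no uniform Lipschitz control on $c(\cdot,y)$ is available. It is resolved by pitting the blow-up $l(0^+)=+\infty$ forced by (A4) against the $\psi^c$-bound from any reference point $z$ where $\psi$ is finite.
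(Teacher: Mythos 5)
Your argument is correct, but it takes a genuinely different route from the paper's. The paper splits the $y$-domain globally, writing $\psi=\min(\psi_1,\psi_2)$ with $\psi_1$ the infimum over $y$ in a fixed ball $S(0;r)$ and $\psi_2$ the infimum over its complement; the far piece $\psi_2$ is handled by subtracting a quadratic from $l$ so as to land in the convex, non-decreasing cost setting of \cite{GM96}, while the near piece $\psi_1$ --- the one that actually sees the diagonal singularity --- is dispatched by appeal to Proposition A.6 of \cite{KM07}. You instead localize pointwise around $\mu$-a.e.\ $x_0$: the finite ceiling $\psi\le M$ on a small ball (coming from a pair $(x_0,y_0)$ in the support with $y_0\neq x_0$) is played off against the blow-up $l(0^+)=+\infty$ forced by (A3)--(A4) and against the bound $\psi^c(y)\le l(d_z/2)-\psi(z)$ from a second finiteness point $z$, so that the infimum defining $\psi$ may be restricted to $|y-x|\ge\delta$; a mean-value estimate with constant $|l'(\delta/2)|$ then gives a local Lipschitz bound, and Rademacher plus $\mu\ll$ Lebesgue finishes. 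Your version is more self-contained (no appeal to \cite{KM07} for the singular part, and it isolates exactly how (A4) expels the nearby $y$'s), whereas the paper's reduction to convex costs additionally yields local semi-concavity of $\psi_1$, which is not needed for this lemma. Two details to tighten: require in addition $\delta<d_z/4$, say (harmless, since shrinking $\delta$ preserves $l(\delta)-C_1>M$), so that every competitor $y$ with $|y-x|<\delta$ and $x\in B$ indeed satisfies $|y-x_0|<d_z/2$ and hence $\psi^c(y)\le C_1$; and note that finiteness of $\psi$ at every point of $B$ (needed to speak of near-minimizers $y_1$) propagates from $\psi(x_0)>-\infty$ --- which is supplied by Lemma~\ref{newsupdif}(2b) rather than (2a) --- via the same one-sided comparison over the restricted set $\{|y-x|\ge\delta\}$.
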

\begin{proof}
Recall that
$$\psi(x)=\inf_{y\in\R^d} \{c(x,y)-\psi^c(y),x\neq y\}.$$
We will prove that $\psi$ is $\mu$-a.e. differentiable on $\R^d$.
\\
\textbf{Step 0.}
Let $r>0$ be such that $\psi$ takes finite values at two or more points in $S(0;r)$ (this is possible due to Remark \ref{diagzero} and Lemma~\ref{newsupdif}). Let $0<a<r$ arbitrarily fixed, which means that $S(0;a)\subset S(0;r)$. We will show in Steps $1-3$ below that $\psi$ is $\mu$-a.e. differentiable on $S(0;a)$, from which we will derive in Step 4 the corresponding property on $\R^d$. The reason for the choice of an $0<a<r$ such that $S(0;a)\subset S(0;r)$, will become apparent in Step $2$ below.

In order to prove that $\psi$ is $\mu$-a.e. differentiable on $S(0;a)$, take $x\in S(0;a)$ arbitrarily fixed.
Then
$$\psi(x)=\min\{\psi_1(x),\psi_2(x)\},$$
where
$$\psi_1(x)=\inf_{y\in S(0;r)} \{c(x,y)-\psi^c(y),x\neq y\}
\quad\mbox{and}\quad
\psi_2(x)=\inf_{y\in S^c(0;r)}\{c(x,y)-\psi^c(y),x\neq y\}.$$
Due to the fact that $\psi$ takes finite values at two or more points in $S(0;r)$, it follows from the definition that $\psi_1$ and $\psi_2$ also take finite values at two or more points in $S(0;r)$.

\textbf{Step 1. $\boldsymbol{\psi_1}$ is locally Lipschitz and semi-concave on $\boldsymbol{S(0;a)}$:}

As $\psi_1$ takes finite values at two or more points, the proof follows the same reasoning as the proof of Proposition A.6 in \cite{KM07}
and will be omitted. Note also that by Proposition~C.6 of \cite{GM96}, differentiability of $\psi_1$ can only fail on a set of $\mu$-measure zero.

\textbf{Step 2. $\boldsymbol{\psi_2}$ is locally Lipschitz and $\boldsymbol{\mu}$-a.e. differentiable on $\boldsymbol{S(0;a)}$:}

Let $\delta:=r-a>0$. Define $\xi\le 0$ by using
the right derivative $2\delta\xi:=l'(\delta^+)$ of $l$ at $\delta$. Then the function $l_{\delta}(\lambda)=l(\lambda)-\xi\lambda^2$ is strictly convex on $[\delta,\infty)$ and non-decreasing since $l'_\delta(\delta^+)=0$. Extend this function to $\lambda\le\delta$ by making $l_\delta(\lambda)$ constant-valued there. Then $h_\delta(x):=l_\delta(|x|)$ will be convex on $\R^d$: taking $x,y\in \R^d$ and $0<t<1$ implies
\begin{equation}
h_\delta((1-t)x+ty)\le l_\delta((1-t)|x|+t|y|)\le (1-t)h_\delta(x)+th_\delta(y).
\end{equation}
Note that $h(x)=h_\delta(x)+\xi|x|^2$ whenever $|x|\ge\delta$. Take $x\in S(0;a)$ and $y\in S^c(0;r)$; then $|x-y|\ge\delta$ and the definition of $\psi_2$ yields
$$\psi_2(x)-\xi|x|^2=\inf_{y\in S^c(0;r)}\Big\{ h_\delta (x-y)+2\xi<x,y>+\xi y^2-\psi^c(y)\Big\}.$$
Note now that $h_\delta$ satisfies conditions (H1)-(H3) of \cite{GM96}. Since $l$ is continuous on $(0,\infty)$, we can apply Theorem~3.3 of \cite{GM96} to $h_\delta$ and $\psi_2$ (see also Proposition C.2 in \cite{GM96}) and thus, $\psi_2(x)-\xi|x|^2$ will be locally Lipschitz. Using the fact that $\psi_2(x)-\xi|x|^2$ is locally Lipschitz, Rademacher's theorem shows that the gradient $\nabla\psi_2$ is defined $\mu$-a.e. everywhere on $S(0;r)$.

\textbf{Step 3. $\boldsymbol{\psi}$ is $\boldsymbol{\mu}$-a.e. differentiable on $\boldsymbol{S(0;a)}$:}

Since $\psi(x)=\min\left(\psi_1(x),\psi_2(x)\right)$, the $\mu$-a.e. differentiability of $\psi$ on $S(0;a)$ follows immediately as $\psi$ is the minimum of two $\mu$-a.e. differentiable functions.

\textbf{Step 4. $\boldsymbol{\psi}$ is $\boldsymbol{\mu}$-a.e. differentiable on $\boldsymbol{\R^d}$:}

Let $(a_n)_{n\in\N}$ be an increasing sequence  of positive real numbers tending to infinity as $n\to\infty$ and $\R^d=\cup_{n\in\N} S(0;a_n)$. Let $A:=\{x\in\R^d:\psi~\mbox{is differentiable at}~x\}$.  Then $\mu(A)=\lim_{n\rightarrow\infty}\mu(A\cap S(0;a_n))$. The statement follows now immediately by means of Step~3.
\end{proof}

The following is a version of Lemma~5.2 of \cite{GM96} for strictly convex and decreasing $l$.

\begin{lemma}[the $c$-superdifferential lies in the graph of a map]
\label{supdif}
Let $c$ satisfy (A1)-(A4). Suppose that $\psi:\R^d\rightarrow\R$ is differentiable at some $x\in\R^d$. Then $y\in\partial^c_0\psi(x)$ implies that $h^*$ is differentiable at $\nabla\psi(x)$ and that $y=x-\nabla h^*(\nabla\psi(x))$.
\end{lemma}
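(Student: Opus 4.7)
The plan is to reduce everything to the classical one-dimensional Legendre duality for $k$ (and hence $l$), after first translating the $c$-superdifferential condition into a pointwise derivative identity via Lemma~\ref{difsubdif}.

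First I would note that since $y\in\partial^c_0\psi(x)$ we have $x\neq y$, so by (A1)--(A2) the cost $h(z)=l(|z|)$ is $C^1$ at $z_0:=x-y\neq 0$, with
\[
\nabla h(z_0)=l'(|z_0|)\,\frac{z_0}{|z_0|}.
\]
Since $l$ is strictly decreasing and strictly convex on $(0,\infty)$, $l'<0$ everywhere on $(0,\infty)$, so $\nabla h(z_0)\neq 0$ and $\nabla h(z_0)$ is antiparallel to $z_0$. Because $\psi$ is differentiable at $x$, Lemma~\ref{difsubdif} gives $\nabla\psi(x)\in\partial_\cdot h(x-y)$, and the singleton nature of the subdifferential of the $C^1$ function $h$ at $z_0$ forces
\[
p:=\nabla\psi(x)=l'(|z_0|)\,\frac{z_0}{|z_0|},\qquad |p|=-l'(|z_0|)>0,\qquad \frac{p}{|p|}=-\frac{z_0}{|z_0|}.
\]

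Next I would compute $h^*$ at $p$ from its definition. Since $k(\beta)=l(\beta)$ for $\beta\geq 0$ and $+\infty$ otherwise, we have
\[
h^*(p)=k^*(-|p|)=\sup_{\beta>0}\bigl\{-\beta|p|-l(\beta)\bigr\}.
\]
The objective $\beta\mapsto-\beta|p|-l(\beta)$ is strictly concave on $(0,\infty)$ by (A1), so the supremum is attained at a unique $\beta^*>0$ satisfying the first-order condition $l'(\beta^*)=-|p|$. Since $|p|=-l'(|z_0|)$ and $l'$ is strictly increasing (hence injective) on $(0,\infty)$, we identify $\beta^*=|z_0|$. Strict concavity of the objective together with $C^1$ smoothness of $l$ yields that $k^*$ is differentiable at $-|p|$ with $(k^*)'(-|p|)=\beta^*=|z_0|$ (the standard envelope theorem for strictly concave optimization).

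Because $|p|>0$, the map $p\mapsto -|p|$ is $C^1$ at $p$, so the chain rule applied to $h^*(q)=k^*(-|q|)$ shows that $h^*$ is differentiable at $p$ with
\[
\nabla h^*(p)=-(k^*)'(-|p|)\,\frac{p}{|p|}=-|z_0|\cdot\Bigl(-\frac{z_0}{|z_0|}\Bigr)=z_0=x-y.
\]
Consequently $x-\nabla h^*(\nabla\psi(x))=x-(x-y)=y$, which proves both claims.

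The only delicate point is the differentiability of $h^*$ at $p$. The argument hinges on two facts: $p\neq 0$ (from $x\neq y$ and $l'<0$), so that $p\mapsto -|p|$ is smooth at $p$; and the strict convexity of $l$ in (A1), which guarantees that the maximizer $\beta^*$ in the Legendre transform is unique and depends continuously on $|p|$, hence $(k^*)'$ exists at $-|p|$. Without strict convexity the generalized Legendre transform $h^*$ could have a kink at $p$, and the formula $y=x-\nabla h^*(\nabla\psi(x))$ would fail to single out a unique $y$; this is precisely why assumption (A1) is the indispensable hypothesis.
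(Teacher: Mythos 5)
Your proof is correct, and its skeleton coincides with the paper's: both first use $x\neq y$ and Lemma~\ref{difsubdif} to obtain $\nabla\psi(x)=\nabla h(x-y)$, observe that this gradient is nonzero because $l'<0$ on $(0,\infty)$, and then invoke the Legendre duality between $h$ and $h^*$ to invert the relation. The only difference is in how that last step is executed: the paper cites the abstract duality statement of Lemma~\ref{A6}(ii)--(iii) (which in turn rests on Lemmas~\ref{A3} and~\ref{A4'} in the appendix), whereas you carry out the one-dimensional Legendre computation by hand --- identifying the unique maximizer $\beta^*=|z_0|$ from the first-order condition $l'(\beta^*)=-|p|$ and injectivity of $l'$, deducing $(k^*)'(-|p|)=|z_0|$ from uniqueness of the maximizer, and applying the chain rule to $h^*(q)=k^*(-|q|)$ at $p\neq 0$. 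Your version is self-contained and makes transparent exactly where strict convexity (injectivity of $\nabla h$ off the diagonal, which the paper flags as the crucial point) and strict monotonicity ($p\neq 0$, so that $q\mapsto-|q|$ is smooth at $p$) enter; the paper's version is shorter but defers these points to the appendix. Both are valid.
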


\begin{proof}
We recall first that by (A1), $\nabla h$ is injective off the diagonal; the injectivity of $\nabla h$ off the diagonal is \textbf{crucial} for the argument of this lemma, as will become apparent below.
The proof follows similar steps as the proof of Lemma~5.2 in \cite{GM96}. Let $y\in\partial^c_0\psi(x)$.
Then $x\neq y$, so $h$ is differentiable at $x-y$. From Lemma~\ref{difsubdif} (see also Lemma 3.1 of \cite{GM96}) we have that  $\partial_{\cdot}\psi(x)\in\nabla h(x-y)$. As $\psi$ is differentiable at $x$, we have $\partial_{\cdot}\psi(x)=\nabla\psi(x)$ and $\nabla\psi(x)=\nabla h(x-y)$. Since $x\neq y$ and since $l$ is strictly convex and strictly decreasing, the gradient $\nabla\psi(x)$ does not vanish $\mu$-a.e. Lemma~\ref{A6} (ii)-(iii) implies both $(\nabla\psi(x),x-y)\in\partial_\cdot h^*$ and differentiability of $h^*$ at $\nabla\psi(x)$, whence $\nabla h^*(\nabla\psi(x))=x-y$.
\end{proof}

\begin{lemma}
\label{map}
Let $c$ and $l$ satisfy (A1)-(A4) and let $\mu,\nu\in\calp(\R^d)$. Suppose that a joint measure $\gamma\in\Gamma(\mu,\nu)$ has $supp~\gamma\subset \partial^c_0\psi=\partial^c\psi$, where $\psi:\R^d\rightarrow R$ is the $c$-transform of a function on $supp~\nu$. The map $T(x):=x-\nabla h^*(\nabla\psi(x))$ pushes $\mu$ forward to $\nu$. In fact, $\gamma=(id,T)_{\#}\mu$ and $T_{\#}\mu=\nu$.
\end{lemma}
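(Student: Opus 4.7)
The plan is to argue that $\gamma$ is concentrated on the graph of the map $T$, and then both conclusions follow from elementary marginal manipulations.

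First I would invoke Lemma \ref{semiconc} to get a set $D\subset\R^d$ of full $\mu$-measure on which $\psi$ is differentiable. Combining this with the hypothesis $\text{supp}~\gamma\subset\partial^c_0\psi$, I can find a set $G\subset\R^d\times\R^d$ of full $\gamma$-measure such that for every $(x,y)\in G$ we have $x\in D$ and $(x,y)\in\partial^c_0\psi$. This is the key measurability step: it uses that the first projection of $\gamma$ equals $\mu$, so $\gamma(D^c\times\R^d)=\mu(D^c)=0$, and hence $G:=(D\times\R^d)\cap\partial^c_0\psi$ has full $\gamma$-measure.

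Next, for each $(x,y)\in G$, Lemma \ref{supdif} applies: differentiability of $\psi$ at $x$ together with $y\in\partial^c_0\psi(x)$ forces $h^*$ to be differentiable at $\nabla\psi(x)$ and yields $y=x-\nabla h^*(\nabla\psi(x))=T(x)$. So $\gamma$ is supported on the graph $\{(x,T(x)) \, : \, x\in D\}$.

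To conclude $\gamma=(\mathrm{id},T)_{\#}\mu$, I would test against an arbitrary bounded Borel function $f:\R^d\times\R^d\to\R$: since $\gamma$-a.e.\ $(x,y)$ satisfies $y=T(x)$,
\begin{equation*}
\int f(x,y)\,d\gamma(x,y)=\int_G f(x,T(x))\,d\gamma(x,y)=\int_{\R^d} f(x,T(x))\,d\mu(x),
\end{equation*}
where the last equality uses that $\mu$ is the first marginal of $\gamma$ (apply the marginal identity to the function $(x,y)\mapsto f(x,T(x))$, which depends only on $x$). The right side equals $\int f\,d((\mathrm{id},T)_{\#}\mu)$, so the two measures agree on all bounded Borel functions and hence coincide. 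Finally, taking the second marginal of both sides gives $\nu=T_{\#}\mu$, since the second marginal of $(\mathrm{id},T)_{\#}\mu$ is exactly $T_{\#}\mu$.

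The only real subtlety is the measurability argument that allows one to pass from the pointwise statement ``$y=T(x)$ whenever $(x,y)\in\partial^c_0\psi$ and $\psi$ is differentiable at $x$'' to the measure-theoretic statement ``$\gamma$ is concentrated on the graph of $T$.'' This is routine given that $D$ has full $\mu$-measure and $\mu$ is the first marginal of $\gamma$; no further assumption on $\psi$ or on $\nu$ is needed.
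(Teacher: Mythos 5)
Your argument is correct and follows essentially the same route as the paper: both rely on Lemma \ref{semiconc} to get $\mu$-a.e.\ differentiability of $\psi$, on Lemma \ref{supdif} to show that $\gamma$ concentrates on the graph of $T$, and then on a routine marginal computation (the paper tests on product sets $U\times V$, you test on bounded Borel functions, which is equivalent). The one point you gloss over and the paper makes explicit is the Borel measurability of $T$ itself -- needed for $\int f(x,T(x))\,d\mu$ and $T_{\#}\mu$ to make sense -- which the paper obtains by noting that $\nabla\psi$ is a pointwise limit of continuous difference quotients on the Borel set where it exists and that $\nabla h^{*}$ is Borel by Lemma \ref{A6}.
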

\begin{proof}

The proof follows the same steps of Theorem 5.4 from \cite{GM96}. For the reader's convenience, we provide the reasoning below.

To begin, one would like to know that the map $T(\cdot)$ is Borel and defined $\mu$-a.e.. By Lemma~\ref{semiconc}, differentiability of $\psi$ can only fail on a set ${\mathcal N}$ of $\mu$-measure zero in $\R^d$. Thus
$\mu({\mathcal N})=0$, $\gamma(N\times Y)=0$ and so the map $\nabla\psi$ is defined $\mu$-a.e.. Since by Remark~\ref{diagzero}, $\gamma(\Delta)=0$ and since $supp~\gamma\subset\partial^c\psi=\partial^c_0\psi$, we have $\gamma(\partial^c_0\psi)=1$. Therefore, define $S:=\{(x,y)\in\partial^c_0\psi|x\in dom\nabla\psi\}$, where $dom\nabla\psi$ denotes the subset of $\R^d$ on which $\psi$ is differentiable. Lemma~\ref{semiconc} shows that $\psi$ is $\mu$-a.e. differentiable on $dom~\psi$. Since
its gradient is obtained as the pointwise limit of a sequence of continuous approximants
(finite differences), $\nabla\psi$ is Borel measurable on the (Borel) set $dom~\nabla\psi$ where it
can be defined. Lemma~\ref{A6} shows that $\nabla h^*$ is a Borel map. For $(x,y)\in S$, Lemma~\ref{supdif} implies that $T$ is defined at $x$ and $y=T(x)$. Thus $T$ is defined on the projection of $S$ onto $\R^d$ by $\pi(x,y):=x$; it is a Borel map since $\nabla\psi$ and $\nabla h^{*}$ are. Moreover, the set $\pi(S)$ is Borel and has full measure for $\mu$: both $\partial^c_0\psi$ and $\pi(\partial^c_0\psi)$ are $\sigma$-compact, so $\pi(S)=\pi(\partial^c_0\psi)\cap dom\nabla\psi$ is the intersection of two Borel sets with full measure. Thus $\gamma(Z\cap S)=\gamma(Z)$ for $Z\subset \R^d\times \R^d$. It remains to check that $ (id,T)_{\#}\mu=\gamma$, from which $T_{\#}\mu=\nu$ follows immediately.

It suffices now to show that the measure $(id,T)_{\#}\mu$ coincides with $\gamma$ on products $U\times V$ of Borel sets $U, V\in \R^d$; the semi-algebra of such products generates all Borel sets in $\R^d\times \R^d$. Define $S:=\{(x,y)\in\partial^c_0\psi|x\in dom~\nabla\psi\}$. Therefore, since $y=T(x)$ if $(x,y)\in S$, we have
\begin{equation}
\label{19}
(U\times V)\cap S=((U\cap T^{-1}(V))\times \R^d)\cap S.
\end{equation}
Being the intersection of two sets having full measure for $\gamma$ -- the closed set $\partial^c_0\psi$ and the Borel set $dom~\nabla\psi\times\R^d$ -- the set $S$ is Borel with full measure. Thus $\gamma(Z\cap S)=\gamma(Z)$ for $Z\subset\R^d\times\R^d$. Applied to (\ref{19}), this yields
$$\gamma(U\times V)=\gamma((U\cap T^{-1}(V))\times \R^d)=\mu(U\cap T^{-1}(V))=(id,T)_{\#}\mu(U\times V).$$
$\gamma\in\Gamma(\mu,\nu)$ implies the second equation, Definition~\ref{pushfor} implies the third.
\end{proof}

\begin{remark}
\label{zeromeasure}
Note that by Lemma \ref{newsupdif} and Lemma \ref{supdif}, we have
$$\mu(\{x\in\R^d:T(x)=x\})=0.$$
\end{remark}

\begin{theorem}[geometric representation of optimal solution]
\label{unique}
Assume that $c$ and $l$ satisfy (A1)-(A4), and let $\mu,\nu\in\calp(\R^d)$ be absolutely continuous
with respect to the Lebesgue measure. Then the following hold:
\begin{enumerate}
\item [(a)] There exists an optimal measure $\gamma\in\Gamma(\mu,\nu)$ with $c$-cyclical monotone support;
\item [(b)] For any such $\gamma$, there is a function $\psi:\R^d\rightarrow\R$ which is the $c$-transform of some function on $supp~\gamma$ such that the map
$T:=id-\nabla h^{*}(\nabla\psi)$ pushes $\mu$ forward to $\nu$ and satisfies $\gamma=(id,T)_{\#}\mu$.
\item [(c)] $\gamma:=(S\times id)_{\#}\nu$ for some inverse map $S:\R^d\rightarrow\R^d$. \EEE
\item [(d)]$S(T(x))=x~\mu$-a.e., while $T(S(y))=y~\nu$-a.e.
\end{enumerate}
\end{theorem}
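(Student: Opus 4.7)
The plan is to assemble parts (a)--(d) from the lemmas already proved in Subsections 3.2--3.3, plus a symmetry argument and a ``two-graph'' identification.

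For (a), I would simply invoke Theorem \ref{exist}: under (A1)--(A4), the combination of Lemma \ref{exist1} (which produces a feasible $\gamma$ with $\mathrm{supp}\,\gamma$ bounded away from the diagonal, hence $C[\gamma]<\infty$) with Theorem \ref{3.2} yields an optimal $\gamma\in\Gamma(\mu,\nu)$ whose support is $c$-cyclically monotone.

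For (b), the proof is a four-step composition applied to this $\gamma$. First, by Lemma \ref{newsupdif}(1) there exists a $c$-concave $\psi:\R^d\to\R\cup\{-\infty\}$ with $\mathrm{supp}\,\gamma\subset\partial^c\psi$; Remark \ref{diagzero} supplies the two off-diagonal pairs needed to invoke Lemma \ref{newsupdif}(2), so $\psi$ is finite at more than one point and $\partial^c\psi=\partial^c_0\psi$ holds $\mu$-a.e. Second, Lemma \ref{semiconc} gives $\mu$-a.e. differentiability of $\psi$ on $\R^d$. Third, Lemma \ref{supdif} shows $\partial^c_0\psi$ lies in the graph of $T(x):=x-\nabla h^{*}(\nabla\psi(x))$. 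Fourth, Lemma \ref{map} upgrades this to $\gamma=(\mathrm{id},T)_{\#}\mu$ with $T_{\#}\mu=\nu$.

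For (c), I would exploit the symmetry $c(x,y)=c(y,x)$. Let $\sigma(x,y):=(y,x)$; then $\sigma_{\#}\gamma\in\Gamma(\nu,\mu)$, has the same cost as $\gamma$ and is hence optimal for the reversed problem, and its support is $c$-cyclically monotone (this last point uses that the condition in Definition \ref{ctransf}(3) is invariant under $\sigma\mapsto\sigma^{-1}$ once $c$ is symmetric in its arguments). Applying part (b) to the reversed problem produces a Borel map $S:\R^d\to\R^d$ with $\sigma_{\#}\gamma=(\mathrm{id},S)_{\#}\nu$; pushing forward by $\sigma$ once more and using $\sigma\circ\sigma=\mathrm{id}$ gives $\gamma=(S\times\mathrm{id})_{\#}\nu$ as claimed.

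For (d), I would use both representations of $\gamma$ simultaneously. Integrating the characteristic function $\mathbf{1}_{\{x\neq S(y)\}}$ against $\gamma$ evaluates, via $\gamma=(\mathrm{id},T)_{\#}\mu$, to $\mu(\{x:S(T(x))\neq x\})$, and via $\gamma=(S\times\mathrm{id})_{\#}\nu$, to $\nu(\{y:S(y)\neq S(y)\})=0$; hence $S\circ T=\mathrm{id}$ $\mu$-a.e. The dual identity $T\circ S=\mathrm{id}$ $\nu$-a.e. follows by the same device applied to $\mathbf{1}_{\{y\neq T(x)\}}$. The main obstacle I anticipate is the symmetry step (c): one must check carefully that the construction behind (b) applied to the reversed marginals delivers exactly the map $S$ that inverts $T$ in the sense required by (d), and that the swap genuinely preserves $c$-cyclical monotonicity of the support, rather than merely producing some a priori unrelated optimal partner.
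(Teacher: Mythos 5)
Your proof of (a) and (b) is exactly the paper's: existence with $c$-cyclically monotone support from Theorem \ref{exist} (i.e.\ Lemma \ref{exist1} plus Theorem \ref{3.2}), then the chain Lemma \ref{newsupdif} $\to$ Lemma \ref{semiconc} $\to$ Lemma \ref{supdif} $\to$ Lemma \ref{map}. For (c) and (d) the paper gives no argument at all, deferring to Theorem 4.6 (iv)--(v) of \cite{GM96}; you supply one, and it is correct. Your route differs from the cited source in a way worth noting: \cite{GM96} treats general costs $h(x-y)$ and obtains the inverse map $S$ from the $c$-transform $\psi^c$ of the same potential (roughly $S=\mathrm{id}+\nabla h^*(-\nabla\psi^c)$), whereas you exploit the symmetry $c(x,y)=c(y,x)$, which holds here because $c=l(|x-y|)$, to push $\gamma$ forward by the swap $\sigma(x,y)=(y,x)$ and re-apply part (b) to the reversed problem. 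Your check that $c$-cyclical monotonicity is preserved under the swap (reindexing by $\tau^{-1}$) is the right point to verify, and your measure-theoretic derivation of (d) -- integrating $\mathbf{1}_{\{x\neq S(y)\}}$ against the two representations $(\mathrm{id},T)_{\#}\mu$ and $(S\times\mathrm{id})_{\#}\nu$ of the same measure $\gamma$ -- is precisely the standard device and settles the worry you raise at the end: the $S$ produced by the reversed problem automatically inverts $T$ because both maps parametrize the same $\gamma$. The only cost of your approach is that it would not extend to non-symmetric costs, but for this paper's class of costs it is a clean and self-contained substitute for the omitted citation.
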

\begin{proof}
For completeness, we sketch the arguments used in \cite{GM96}.

(a) and (b) The existence of an optimal $\gamma$ with $c$-cyclical monotone support is guaranteed by Theorem~\ref{exist}.
Then $supp~\gamma\subset\partial^c_0\psi$. The map $\pi(x,y)=x$ on $\R^d\times\R^d$ pushes $\gamma$ forward to $\mu=\pi_{\#}\gamma$ while
projecting the closed set $\partial^c_0\psi$ to a
$\sigma$-compact set of full measure for $\mu$. From Lemma~\ref{semiconc}, we know that $\psi$ is differentiable $\mu$-a.e. Lemma~\ref{map} shows that
$T(\cdot)$ pushes $\mu$ forward to $\nu$ while $\gamma$ coincides with the measure $(id,T)_{\#}\mu$.
\\
The proofs of (c) and (d) follow the same reasoning as the proofs of (iv) and (v) from Theorem~4.6 in \cite{GM96} and will be omitted.

\end{proof}

\subsection{Uniqueness of the optimal measure}
\begin{lemma}[$c$-superdifferentiability of $c$-transforms]
\label{pointexis}
Let $c$ and $l$ satisfy (A1)-(A4) and let  $V\subset\R^d$ be a closed set. Let $\psi:\R^d\rightarrow\R$ be the $c$-transform of a function on $V$ and suppose that $T:=id-\nabla h^{*}(\nabla\psi)$ can be defined at some $p\in\R^d$ (i.e., $\psi$ is differentiable at $p$ and $\nabla h^{*}$ exists at $\nabla\psi(p)$). Then $\partial_0^c\psi(p)=\{T(p)\}$.
\end{lemma}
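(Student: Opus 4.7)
The plan is to prove the two inclusions $\partial_0^c\psi(p) \subseteq \{T(p)\}$ and $T(p) \in \partial_0^c\psi(p)$ separately.

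The first inclusion is immediate from Lemma~\ref{supdif}: any $y \in \partial_0^c\psi(p)$ satisfies $y \neq p$ and $(p,y) \in \partial^c\psi$, so Lemma~\ref{supdif} (whose single hypothesis, differentiability of $\psi$ at $p$, is given) forces $y = p - \nabla h^*(\nabla\psi(p)) = T(p)$. This already implies $|\partial_0^c\psi(p)| \leq 1$, and the real content of the lemma is existence of a $c$-supergradient.

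For the reverse inclusion, the strategy is to exploit the identity $\psi = \psi^{cc}$ (recorded in the remark following Definition~\ref{ctransf}), which says
\begin{equation*}
\psi(p) \;=\; \inf_{y \in \R^d}\{c(p,y) - \psi^c(y)\}.
\end{equation*}
Writing $F_p(y) := c(p,y) - \psi^c(y)$ and fixing a minimizing sequence $(y_n)$, the universal bound $\psi(z) \leq c(z,y_n) - \psi^c(y_n)$ rearranges to
\begin{equation*}
\psi(z) - \psi(p) \;\leq\; c(z,y_n) - c(p,y_n) + \bigl(F_p(y_n) - \psi(p)\bigr), \qquad z \in \R^d.
\end{equation*}
If the sequence has a cluster point $y^* \neq p$, then continuity of $c$ off the diagonal (A2) lets me pass to the limit and obtain $\psi(z) - \psi(p) \leq c(z,y^*) - c(p,y^*)$ for every $z$ (the case $z = y^*$ being trivial since $c(z,y^*) = +\infty$). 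This is exactly $(p,y^*) \in \partial^c\psi$, and together with $y^* \neq p$ and the first inclusion yields $y^* = T(p) \in \partial_0^c\psi(p)$.

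The main technical obstacle is therefore to rule out the two degenerate scenarios for the minimizing sequence $(y_n)$. If $|y_n| \to \infty$, then by (A1) both $c(p,y_n)$ and $c(z,y_n)$ converge to the common limit $l(\infty)$, and the displayed inequality passes to the limit as $\psi(z) \leq \psi(p)$ for all $z$. Differentiability of $\psi$ at $p$ then forces $\nabla\psi(p) = 0$, contradicting the standing hypothesis that $\nabla h^*$ is defined at $\nabla\psi(p)$: indeed, under (A1) the optimizer $\beta^*$ in the representation $h^*(x) = \sup_{\beta > 0}\{-\beta|x| - l(\beta)\}$ satisfies $l'(\beta^*) = -|x|$, hence diverges as $|x| \to 0^+$ (since $l'(\infty) = 0$), so $\nabla h^*$ cannot be continuous at the origin (cf.\ the Coulomb case $h^*(x) = -2\sqrt{|x|}$). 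If instead $y_n \to p$ along a subsequence, then (A4) yields $c(p,y_n) \to +\infty$ while $c(z,y_n) \to c(z,p) < +\infty$ for any $z \neq p$, so the displayed inequality degenerates to $\psi(z) = -\infty$ for such $z$, contradicting $\psi:\R^d \to \R$. After these exclusions the minimizing sequence is bounded away from $p$ and from infinity, and extracting a convergent subsequence delivers the required $y^* \neq p$, completing the proof.
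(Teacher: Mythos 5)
Your proof is correct, and its skeleton matches the paper's (which adapts Proposition 6.1 of \cite{GM96}): uniqueness of the $c$-supergradient follows from Lemma~\ref{supdif}, and existence is obtained from a minimizing sequence $(y_n)$ for the infimum defining $\psi(p)$, after showing it has a cluster point $y^*\neq p$. The genuine difference lies in how the degenerate behaviours of $(y_n)$ are excluded. The paper rules out divergence by a duality argument: uniform subdifferentiability of $h$ (Lemma~\ref{A5}) shows a diverging subsequence would force $\nabla\psi(p)$ to be a supergradient of modulus $|l'(\infty)|$, and the contradiction is then extracted from $(\nabla\psi(p),p-T(p))\in\partial_\cdot h^*$, Lemma~\ref{A4'} and the strict convexity of $l$. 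You instead note that $c(z,y_n)-c(p,y_n)\to 0$ because both terms tend to the common finite limit $l(\infty)$ (finite since $l\ge 0$ is decreasing), so $p$ would be a global maximum of $\psi$ and $\nabla\psi(p)=0$; the contradiction then comes from $h^*$ having infinite slope at the origin (indeed $l(0^+)=+\infty$ forces $l'(0^+)=-\infty$, so the maximizer $\beta^*$ with $l'(\beta^*)=-|x|$ exists for all $x\neq 0$ and blows up as $x\to 0$, whence $(h^*(x)-h^*(0))/|x|\le -\beta^*\to-\infty$), which is incompatible with the standing hypothesis that $\nabla h^*$ exists at $\nabla\psi(p)$. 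This is more elementary and bypasses the appendix machinery on $\partial_\cdot h^*$ and $\partial_\cdot l$, while resting on the same structural inputs ($l\ge 0$ decreasing convex, hence $l'(\infty)=0$; $T(p)$ well defined). Your second exclusion ($y_n\to p$ would force $\psi=-\infty$ off $p$) appears in the paper too, only applied after the limit is extracted rather than before; and your use of $\psi=\psi^{cc}$ in place of the representation with $A\subset V\times\R$ is harmless, since the conclusion does not require $y^*\in V$. Two small points of wording: the relevant conclusion about $h^*$ is that it is not differentiable at the origin (so $\nabla h^*(0)$ does not exist), not merely that $\nabla h^*$ fails to be continuous there; and what the exclusions literally yield is that the bounded sequence has no cluster point equal to $p$, which is exactly what is needed to extract $y^*\neq p$.
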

\begin{proof}
The proof follows similar arguments as the proof of Proposition~6.1 of \cite{GM96}.
From Lemma~\ref{supdif}, it is clear that $\partial_0^c\psi(p)\subset\{T(p)\}$. Therefore, we only need to prove that $\partial_0^c\psi(p)$ is non-empty. Assume that $T(p)$ is defined for some $p\in\R^d$. By $c$-concavity of $\psi$, there is a sequence ${(y_n,\alpha_n)}_{n=1}^{\infty}\subset A\subset V\times\R$ such that
\begin{equation}
\label{25}
\psi(p)=\lim_{n\rightarrow+\infty}[c(p,y_n)+\alpha_n].
\end{equation}
As is shown below, $(|y_n|)_{n=1}^\infty$ must be bounded. We first assume this bound to complete the proof.
Since $(y_n)_{n=1}^{\infty}$ is bounded, a subsequence must converge to a limit $y_n\rightarrow y$ in
the closed set V. On the other hand, $y\in\partial_0^c\psi(p)$ since for all $x\in\R^d$, (\ref{cconcave}) and (\ref{25}) imply
$$\psi(x)\le\inf_{n\in\bbn} \{c(x,y_n)+\alpha_n\}\le c(x,y)+\psi(p)-c(p,y),$$
with both $\psi(x),\psi(p)>-\infty$, as shown by Lemma~\ref{newsupdif} (2b). Thus, $p\neq y$ and $y\in\partial_0^c\psi(p)$. It remains only to prove that the sequence $(|y_n|)_{n=1}^{\infty}$ is bounded, which means that we can extract a convergent subsequence that converges to a point $y\in Y$.  To produce a contradiction, suppose that a subsequence diverges in a direction $\hat{y}_n\rightarrow\hat{y}$. Then $|p-y_n|$ is bounded away from zero by $\delta>0$. Since for each arbitrary small $x\in\R^d$ we have $|p-y_n-x|>0$, it follows that $\nabla h$ exists at $p-y_n-x$. More precisely, for each $n$ the uniform subdifferentiability in Lemma~\ref{A5} gives
\begin{equation}
\label{subdiff1}
h(p-y_n)\ge h(p-y_n-x)-<x,w_n(x)>+O_\delta(|x|^2),~\mbox{where}~w_n(x)=\frac{{p-y_n-x}}{|p-y_n-x|}l'(|p-y_n-x|)
\end{equation}
for arbitrary small $x\in \R^d$ and $O_\delta(|x|^2)$ independent of $n$.
We re-write
$$<x,w_n(x)>=<x,\frac{p-y_n}{|p-y_n|}>\frac{|p-y_n|}{|p-y_n-x|}l'(|p-y_n-x|)-<x,x>\frac{l'(|p-y_n-x|)}{|p-y_n-x|}.$$
Equation (\ref{subdiff1}) now becomes
\begin{equation}
\label{subdiff2}
h(p-y_n)\ge h(p-y_n-x)-<x,\frac{p-y_n}{|p-y_n|}>\frac{|p-y_n|}{|p-y_n-x|}l'(|p-y_n-x|)-\frac{1}{\delta}|x|^2l'(|p-y_n-x|)+O_\delta(|x|^2).
\end{equation}
Recall now that the derivative of $l$ is negative and increasing and, therefore, $|l'(|p-y_n-x|)|\le |l'((\delta+|x|))$. The sequence $(|y_n|)_{n=1}^\infty$ can only diverge if $|l'(|p-y_n-x|)|$ tends to $|l'(\infty)|:=\inf_{\lambda}|l'(\lambda)|$. Therefore, $\frac{|p-y_n|}{|p-y_n-x|}l'(|p-y_n-x|)\rightarrow l'(\infty)$. Taking a subsequence if necessary ensures that $\frac{p-y_n}{|p-y_n|}$ converge to a limit $w\in \R^d$, with $|w|=1$. Thus $\frac{p-y_n}{|p-y_n-x|}l'(|p-y_n-x|)$ converges to $wl'(\infty)$, which is independent of $x$.
Combining (\ref{subdiff2}) with the definition of a $c$-concave function, this yields
$$\psi(p)\ge\psi(p-x)-<x,w>l'(\infty)+O_\delta(|x|),$$
where the large $n$ limit has been taken using (\ref{25}). By taking $z=-x$ in the above equation, we get
$$\psi(p)+<z,w_1>+O_\delta(|x|)\ge\psi(p+z),~\mbox{where}~w_1:=-wl'(\infty)~\mbox{and}~|w_1|=|l'(\infty)|.$$
Thus $w_1\in\partial^.\psi(p)$. On the other hand, differentiability of $\psi$ at $p$ implies $\partial^.\psi(p)=\{\nabla\psi(p)\}$, whence $w_1=\nabla\psi(p)$. Now $(w_1,p-T(p))\in\partial_.h^*$ follows from the definition of $T(p)$. Assume now that $p=T(p)$. It follows that $(w_1,0)\in\partial_.h^*$. If $h^*$ is non-constant, Lemma~\ref{A4'} gives $(-|y|,0)\in\partial_.l$ -- a result which is obvious when $h^*$ is constant. Thus $(-|y|,0)\in\partial_.l)$ by Lemma \ref{A3} (i). This conclusion contradicts the fact that $l$ is not differentiable at $0$. Therefore $T(p)\neq p$ and $(\nabla h)(p-T(p))= w_1$. Lemma~\ref{A4'} gives $(|p-T(p)|,-|w_1|)\in\partial_.l$. Since $l$ is strictly convex, $|w_1|> |l'(\infty)|$, which produces a contradiction. Therefore, the sequence $(y_n)_{n=1}^{\infty}$ is bounded.
\end{proof}

\begin{theorem}[uniqueness of the optimal map]
\label{geom}
Assume that $c$ and $l$ satisfy (A1)-(A4) and let $\mu,\nu\in \calp(\R^d)$ be such that they are absolutely continuous with respect to the Lebesgue measure.
Then an optimal map $T$ pushing $\mu$ forward to $\nu$ is uniquely determined $\mu$-a.e. by the requirement
that it is of the form $T(x)=x-\nabla h^{*}(\nabla\psi(x))$ for some $c$-concave $\psi$ on $\R^d$.
\end{theorem}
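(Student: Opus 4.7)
The plan is a short convex-combination argument that reduces uniqueness of the Monge map to the single-valuedness already established in Theorem \ref{unique}. Suppose $T_1$ and $T_2$ are two optimal maps of the form $T_i(x) = x - \nabla h^*(\nabla\psi_i(x))$ for some $c$-concave $\psi_i$, and associate to them the Kantorovich plans $\gamma_i := (\mathrm{id}, T_i)_\# \mu$. By the embedding $C[\gamma_{T_i}] = I[T_i]$ recorded just before (\ref{CI}), each $\gamma_i$ minimises $C$ over $\Gamma(\mu,\nu)$.

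The next step exploits the linear structure of the Kantorovich problem. The set $\Gamma(\mu,\nu)$ is convex, since marginals are preserved under convex combinations, and the cost $C[\gamma] = \int c\, d\gamma$ is linear in $\gamma$. Hence the midpoint $\gamma := \tfrac{1}{2}(\gamma_1 + \gamma_2)$ again lies in $\Gamma(\mu,\nu)$ and still attains the minimum of $C$. Applying Theorem \ref{unique} to this optimal $\gamma$, I obtain a $c$-concave function $\psi$ such that $\gamma = (\mathrm{id}, T)_\# \mu$ with $T = \mathrm{id} - \nabla h^*(\nabla\psi)$; in particular $\gamma$ is concentrated on the single-valued graph $G_T := \{(x, T(x))\}$.

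The comparison is now immediate. Since $\gamma_i \le 2\gamma$ as measures, each $\gamma_i$ is also concentrated on $G_T$. But $\gamma_i$ is concentrated on its own graph $G_{T_i} = \{(x, T_i(x))\}$, and the intersection $G_{T_i} \cap G_T$ is precisely $\{(x, T_i(x)) : T_i(x) = T(x)\}$. Pushing forward under the first projection and using that both $\gamma_i$ and $\gamma$ have first marginal $\mu$, the set $\{x : T_i(x) = T(x)\}$ must have full $\mu$-measure. Therefore $T_i = T$ $\mu$-a.e. for $i = 1,2$, and in particular $T_1 = T_2$ $\mu$-a.e., which is the desired uniqueness.

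The main obstacle is essentially conceptual rather than technical: the heart of the uniqueness lies in Theorem \ref{unique} and Lemma \ref{pointexis}, which together force any optimal plan $\gamma$ to be concentrated on the graph of a single map (via $\partial^c_0\psi(p) = \{T(p)\}$ at points of differentiability, a set of full $\mu$-measure by Lemma \ref{semiconc}). Once this rigidity is available, the convex-combination step is mechanical and requires verifying only that $\gamma$ still satisfies the hypotheses of Theorem \ref{unique}, which it does since $\mu, \nu$ are absolutely continuous and the cost satisfies (A1)--(A4) throughout.
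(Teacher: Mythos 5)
Your midpoint argument is a genuinely different route from the paper's, which follows Gangbo--McCann: there one supposes two $c$-concave potentials $\psi,\psi'$ whose associated maps $T,T'$ both push $\mu$ forward to $\nu$, sets $U=\{\psi>\psi'\}$ and $V=\partial^c\psi(U)$, and derives $\mu(T^{-1}(V))<\mu(U)\le\mu(T'^{-1}(V))$, contradicting $T_{\#}\mu=T'_{\#}\mu$. The crucial difference is that the paper's argument never assumes the second map is optimal --- only that it has the prescribed form and the correct push-forward --- whereas you assume from the outset that both $T_1$ and $T_2$ are optimal. That assumption is exactly what makes the convex-combination step available, and it cannot be dropped: if $T_2$ is merely of the form $\mathrm{id}-\nabla h^*(\nabla\psi_2)$ with $(T_2)_{\#}\mu=\nu$ but not known to be optimal, then $\gamma_2$ and the midpoint $\gamma$ need not be optimal, and Theorem \ref{unique} says nothing about them. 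Since Theorem \ref{geom} as stated (and as proved, following Theorem 4.4 of \cite{GM96}) asserts that the form alone, together with the push-forward condition, determines the map --- which is what turns the representation into a characterization of optimality --- your proof establishes a strictly weaker statement. (The weaker statement does suffice for the uniqueness claims of Theorem \ref{uniqgeom}, because by Theorem \ref{3.2}(b) every optimal plan has support in a common $c$-cyclically monotone set and is therefore induced by a map of the prescribed form.)

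Two smaller points need repair even for the weaker statement. First, ``each $\gamma_i$ minimises $C$'' does not follow from the embedding recorded before (\ref{CI}), which only yields $\inf C\le\inf I$; you need the reverse identification $\inf C=\inf I$, which becomes available only after Theorem \ref{unique} exhibits an optimal plan induced by a map. Second, Theorem \ref{unique}(b) applies to optimal measures with $c$-cyclically monotone support, not to arbitrary optimal measures, so you must check that the midpoint $\gamma$ has this property; this is true because $\mathrm{supp}\,\gamma=\mathrm{supp}\,\gamma_1\cup\mathrm{supp}\,\gamma_2$ lies in the single $c$-cyclically monotone set $S$ of Theorem \ref{3.2}(b). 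But once you invoke that common $S$, the midpoint trick is redundant: all optimal plans are supported in $\partial^c_0\psi$ for one and the same $\psi$, hence all equal $(\mathrm{id},T)_{\#}\mu$ for the same $T$ by Lemma \ref{map}.
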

\begin{proof}
For completeness' sake, we will sketch the main idea of the proof, as given
in Theorem 4.4 of \cite{GM96}.

We will prove by contradiction that $T$
is unique. That is, we assume that there exists, in addition to $T$ and
$\psi$, a second c-concave function $\psi'$ for which $T'(x):=x-\nabla
h^*(\nabla\psi'(x))$ pushes $\mu$ forward to $T_{\#}\mu=T'_{\#}\mu=\nu$.
Recall now that $\psi$ and $\psi'$ are $\mu$-a.e. differentiable. $T$ and
$T'$
are defined $\mu$-almost everywhere, and unless they coincide, there
exists some $p\in\R^d$ at which both $\psi$ and $\psi'$ are differentiable
but $T(p)\neq T'(p)$. From this, it is clear that
$\nabla\psi(p)\neq\nabla\psi'(p)$.

Let $U:=\{x\in\R^d|\psi(p)>\psi'(p)\}$. A contradiction will be
derived by showing that the push-forwards $T_{\#}\mu=\nu$ and
$T'_{\#}\mu=\nu$--alleged to coincide--must
differ on $V:=\partial^c\psi(U)$. We will show that
$$\mu(T^{-1}(V))<\mu(U)\le\mu(T'^{-1}(V)).$$
The main ingredient necessary to prove the last equation is the fact that
$\partial_0^c\psi(p)=\{T(p)\}$, which is proved in Lemma \ref{pointexis}.
By using this together with the fact that $\mu$ is absolutely continuous
with respect to the Lebesgue measure, the proof follows via the same arguments
as Theorem 4.4 from \cite{GM96} and will be omitted.
\end{proof}
\subsection{Some general properties of the optimal measure and the optimal cost for equal marginals}

We will investigate in this subsection the case of equal marginals $\mu=\nu$ with common density $\rho$, and
assume throughout that the cost function $c$ satisfies conditions (A1)-(A4). We also introduce the optimal cost
$$E_{OT}^{norm}[\rho]:=\inf_{\gamma\in\Gamma(\rho,\rho)}C[\gamma].$$
$E_{OT}^{norm}[\rho]$ corresponds to the non-normalized functional $E_{OT}$ introduced in Section 3 via
$$E_{OT}^{norm}[\rho]=\frac{1}{{N\choose 2}}E_{OT}[N\rho].$$

For the proof of Theorem \ref{prop} below, we will need the following lemma.
\begin{lemma}
\label{mongeamperre}
For any marginals $\mu$ and $\nu$, and any map $T$ which pushes $\mu$ forward to $\nu$, $\mu,\nu$ and $T$ satisfy the following equation
$$\mu(x)=\nu(T(x))|det~D T(x))|,$$
where $DT$ is the approximate gradient of $T$ (for a definition see for example Definition~10.2 of~\cite{Villani}).

\end{lemma}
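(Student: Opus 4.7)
The plan is to derive the identity as the standard change-of-variables (Federer) area formula applied to the defining relation of the push-forward. In three steps:

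First I would restate the hypothesis $T_{\#}\mu=\nu$ in integral form: for every bounded Borel test function $\phi:\R^d\to\R$,
\[
\int_{\R^d}\phi(T(x))\,\mu(x)\,dx \;=\; \int_{\R^d}\phi(y)\,\nu(y)\,dy.
\]
This is merely the equivalent formulation of the definition of image measure in terms of integrals of densities.

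Second, I would apply the area formula (equivalently, the change-of-variables identity built into Definition~10.2 of~\cite{Villani}) to the right-hand side. Under the hypothesis that $T$ possesses an approximate gradient $DT(x)$ at $\mu$-a.e.\ $x$ and is $\mu$-a.e.\ injective, this produces
\[
\int_{\R^d}\phi(y)\,\nu(y)\,dy \;=\; \int_{\R^d}\phi(T(x))\,\nu(T(x))\,|\det DT(x)|\,dx.
\]
Equating the two displays yields
\[
\int_{\R^d}\phi(T(x))\,\bigl[\mu(x) - \nu(T(x))\,|\det DT(x)|\bigr]\,dx \;=\; 0
\]
for every bounded Borel $\phi$. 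Letting $\phi$ range over indicator functions $\chi_V$ of a generating family of Borel sets (e.g., balls in $\R^d$) forces the bracketed integrand to vanish $\mu$-a.e., which is exactly the claimed Monge--Amp\`ere identity.

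The main obstacle is justifying the hypotheses of the area formula, since mere measurability of $T$ together with $T_{\#}\mu=\nu$ does not by itself supply either approximate differentiability or injectivity. In the setting in which this lemma will be used (the optimal map constructed in Theorem~\ref{unique}) both come for free: $T(x)=x-\nabla h^{*}(\nabla\psi(x))$ is $\mu$-a.e.\ approximately differentiable because $\psi$ is $\mu$-a.e.\ differentiable by Lemma~\ref{semiconc} and $\nabla h^{*}$ is Borel by Lemma~\ref{supdif}, while the $\mu$-a.e.\ injectivity of $T$ is precisely the content of $S\circ T=\mathrm{id}$ $\mu$-a.e.\ in Theorem~\ref{unique}(d). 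For a general Borel map $T$ the object $DT$ in the statement is the approximate gradient in the sense of Definition~10.2 of~\cite{Villani}, with the requisite area formula being intrinsic to that framework.
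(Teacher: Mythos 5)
Your proposal follows essentially the same route as the paper: both rewrite $T_{\#}\mu=\nu$ in integral form and then apply the change-of-variables (area) formula with the approximate gradient $DT$ to produce the Jacobian factor, concluding by the arbitrariness of the test set or test function. Your added discussion of the hypotheses needed for the area formula (approximate differentiability and $\mu$-a.e.\ injectivity of $T$, supplied in the intended application by Lemma~\ref{semiconc}, Lemma~\ref{supdif} and Theorem~\ref{unique}(d)) is in fact more careful than the paper's two-line argument, which performs the change of variables without comment.
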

\begin{proof}
We have that $\mu(T^{-1}(A))=\nu(A)$ for all Borel sets $A\subseteq\R^d$. Then for any such $A$ we have
\begin{equation}
\label{1}
\int_{y\in T(A)} d\nu(y)=\int_{x\in A}d\mu(x).
\end{equation}
With the change of variables $y=T(x)$ the left-hand side of (\ref{1}) becomes
\begin{equation}
\label{2}
\int_{y\in T(A)} d\nu(y)=\int_{y\in A}|det~D T(x)|\nu(T(x))dx.
\end{equation}
From (\ref{1}) and (\ref{2}), the claim follows.
\end{proof}

\begin{theorem}
\label{prop}
Assume that $\mu=\nu$ with common density $\rho$. Then
\begin{itemize}
\item [(a)] The optimal measure $\gamma_T$ which minimizes $C[\gamma]$ is symmetric, that is
$$\gamma_T(A\times B)=\gamma_T(B\times A)~\mbox{for all Borel}~A,B\in \R^d.$$
\item [(b)] The optimal cost $E_{OT}^{norm}[\rho]$ is strictly convex in $\rho$.
\item [(c)] Let $c(x,y)=1/|x-y|$. Then for all $\alpha>0$ we have the following dilation behaviour
$$E_{OT}^{norm}[\alpha^d\rho(\alpha\cdot)]=\alpha E_{OT}^{norm}[\rho(\cdot)].$$
\end{itemize}
\end{theorem}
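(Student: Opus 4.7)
The plan is to handle the three parts separately; (a) and (c) follow in a few lines from uniqueness of the optimal plan and a change of variables, while the strict half of (b) is the main obstacle. For (a), writing $\gamma_T=(\mathrm{id},T)_{\#}\rho$ for the unique optimal plan supplied by Theorem~\ref{uniqgeom}, introduce the transposed measure $\gamma_T^\top(A\times B):=\gamma_T(B\times A)$. Since $\mu=\nu=\rho$, swapping marginals keeps $\gamma_T^\top\in\Gamma(\rho,\rho)$, and the symmetry $c(x,y)=c(y,x)$ immediately gives $C[\gamma_T^\top]=C[\gamma_T]$. Hence $\gamma_T^\top$ is also optimal, and the uniqueness clause of Theorem~\ref{uniqgeom} forces $\gamma_T^\top=\gamma_T$, which is the claimed symmetry.

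For (b), convexity is immediate from the linearity of $\gamma\mapsto C[\gamma]$: with $\rho_t:=t\rho_1+(1-t)\rho_2$ and $\gamma_i$ the optimal plan for $\rho_i$, the combination $\gamma_t:=t\gamma_1+(1-t)\gamma_2$ lies in $\Gamma(\rho_t,\rho_t)$ and attains $t\,E_{OT}^{norm}[\rho_1]+(1-t)\,E_{OT}^{norm}[\rho_2]$. For the strict inequality I would argue by contradiction. Equality would force $\gamma_t$ to be optimal for $\rho_t$, hence by the uniqueness part of Theorem~\ref{uniqgeom} to be concentrated on the graph of a single map $T_t$. Then each of $\gamma_1,\gamma_2$ would also have to be supported on $\mathrm{graph}(T_t)$, so $T_1=T_t$ $\rho_1$-a.e.\ and $T_2=T_t$ $\rho_2$-a.e. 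Using the explicit representation $T_i=\mathrm{id}-\nabla h^{*}(\nabla\psi_i)$ from Theorem~\ref{uniqgeom} together with injectivity of $\nabla h^{*}$ off the diagonal (ensured by the strict convexity of $l$ in (A1)), this would force the $c$-concave potentials $\psi_i$ to agree up to a constant where both are defined, and then the pushforward identities $T_{i\#}\rho_i=\rho_i$ together with matching of supports should pin down $\rho_1=\rho_2$.

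For (c), pushforward along the scaling $S_\alpha(x):=x/\alpha$: then $\rho_\alpha:=\alpha^d\rho(\alpha\cdot)=(S_\alpha)_{\#}\rho$, and $\gamma_\alpha:=(S_\alpha\times S_\alpha)_{\#}\gamma\in\Gamma(\rho_\alpha,\rho_\alpha)$ whenever $\gamma\in\Gamma(\rho,\rho)$. A change of variables using the explicit form $c(x,y)=1/|x-y|$ yields
\[ C[\gamma_\alpha]=\int\frac{d\gamma(x,y)}{|S_\alpha(x)-S_\alpha(y)|}=\alpha\int\frac{d\gamma(x,y)}{|x-y|}=\alpha\,C[\gamma], \]
so choosing $\gamma$ optimal for $\rho$ gives $E_{OT}^{norm}[\rho_\alpha]\le\alpha\,E_{OT}^{norm}[\rho]$; applying the same argument with $\rho_\alpha$ in place of $\rho$ and $1/\alpha$ in place of $\alpha$ gives the reverse inequality. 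The main obstacle throughout is the final identification $\rho_1=\rho_2$ in the strict convexity step of (b): the mere agreement of transport maps on overlapping supports does not a priori force agreement of the measures they preserve, and the implicit dependence of each potential $\psi_i$ on $\rho_i$ through its own marginal constraint has to be exploited carefully.
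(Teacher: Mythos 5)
Your parts (a) and (c) are correct. For (a) you take a genuinely different, and in fact cleaner, route than the paper: the paper computes $\gamma_T(A\times B)=\int_A\chi_B(T(x))\,d\mu(x)$ and converts it into $\gamma_T(B\times A)$ via the Jacobian identity $\mu(x)=\mu(T(x))|\det DT(x)|$ of Lemma \ref{mongeamperre}, whereas you observe that the transposed plan still lies in $\Gamma(\rho,\rho)$ and has the same cost because $c(x,y)=l(|x-y|)=c(y,x)$, and then invoke the uniqueness of the optimal measure from Theorem \ref{uniqgeom}. Your argument sidesteps the regularity of $T$ implicitly needed for the change-of-variables step and is the more robust of the two. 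Part (c) is essentially the paper's argument (pushforward under dilation plus a change of variables); you are slightly more careful in recording both inequalities. The non-strict convexity in (b) also matches the paper, which simply calls it ``immediate from the definition.''

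The gap you flag in the strict part of (b) is genuine, and you have located it exactly. The paper's entire justification is the sentence ``strict convexity follows from uniqueness,'' i.e., precisely the argument you attempt: equality forces $\gamma_t=t\gamma_1+(1-t)\gamma_2$ to be optimal for $\rho_t$, hence concentrated on a single graph, hence $\gamma_i=(\mathrm{id},T_t)_{\#}\rho_i$ for $i=1,2$ and $(T_t)_{\#}\rho_i=\rho_i$. But the final step ``common optimal map $\Rightarrow\rho_1=\rho_2$'' is false, and no amount of exploiting the potentials $\psi_i$ can repair it: the representation of Theorem \ref{onedimrep} shows that a fixed map of the correct structure is optimal for infinitely many densities. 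Concretely, take $d=1$, $I=[-1,1]$, $a=0$, and $T(x)=x+1$ on $(-1,0)$, $T(x)=x-1$ on $(0,1)$. Then $T=\bar{\mu}_2^{-1}\circ\mu_1$ for every positive density on $I$ satisfying $\rho(x)=\rho(x+1)$, e.g.\ both the uniform density and $\rho_2(x)=\tfrac12+\tfrac14\sin(2\pi x)$, and for every such $\rho$ one gets $E_{OT}^{norm}[\rho]=\int_I l(|x-T(x)|)\rho\,dx=l(1)$. The functional is therefore constant along the whole segment joining these two distinct densities, so strict convexity fails; the defect lies in the statement (or in missing hypotheses), not merely in your proof. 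In short: (a) is a valid alternative proof, (c) coincides with the paper, and for (b) your honest refusal to claim the last step is better justified than the paper's own one-line assertion.
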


\begin{proof}
(a) Recall from Theorem \ref{unique} that $\gamma_T=(id,T)_{\#}\mu$, where $T$ is the optimal map which pushes $\mu$ forward to $\nu$, i.e. $\mu(T^{-1}(A))=\nu(A)$ for all $A\in {\cal B}(\R^d)$. Then
$\gamma_T(x,y)=\mu\left((id,T)^{-1}(x,y)\right)=\delta_{T(x)}(y)\mu(x)$. Using this we get
\begin{equation}
\label{symm}
\gamma_T(A\times B)=\int_{x\in A}\int_{y\in B}\delta_{T(x)}(y)\mu(dx)=\int_{x\in A}\chi_B(T(x))\mu(dx),
\end{equation}
where  $\chi$ denotes the indicator function. We now use Lemma \ref{mongeamperre} and the fact that $\mu=\nu$, then the right hand-side of (\ref{symm}) becomes
\begin{eqnarray*}
\int_{x\in A}\chi_B(T(x))\mu(dx)&=&\int_{x\in A}\chi_B(T(x))\mu(T(x))|det~D T(x)|dx=\int_ {y\in T^{-1}(A)}\chi_B(y)\mu(y)dy\\
&=&\int_{y\in B}\chi_{T^{-1}(A)}(y)\mu(y)dy=\int_{y\in B}\chi_{A}(T(y))\mu(y)dy\\
&=&\gamma_T(B\times A).
\end{eqnarray*}
(b)  The convexity is immediate from the definition of $E_{OT}^{norm}[\rho]$, and strict convexity follows from uniqueness.\\
(c)  Fix any $\alpha>0$. Then
\begin{eqnarray*}
E_{OT}^{norm}[\alpha^d\rho(\alpha\cdot)] &=&\inf_{\tilde\gamma\in\Gamma(\alpha^d\rho(\alpha\cdot), \alpha^d\rho(\alpha\cdot))}\int \frac{1}{|x-y|}\tilde\gamma(x,y)dx dy\\
&=&\inf_{\alpha^{2d}\gamma(\alpha x,\alpha y):\gamma\in\Gamma(\rho,\rho) }\int \frac{1}{|x-y|}\alpha^{2d}\gamma(\alpha x,\alpha y)dx dy\\
&=&\inf_{\gamma\in\Gamma(\rho,\rho) }\int \frac{1}{|x-y|}\alpha^{2d}\gamma(\alpha x,\alpha y)dx dy\\
&=& \inf_{\gamma\in\Gamma(\rho,\rho) }\alpha\int \frac{1}{|x-y|}\gamma(x',y')dx' dy'\\
&=&\alpha E_{OT}^{norm}[\rho(\cdot)],
\end{eqnarray*}
where for the second equality we used the fact that for $\gamma\in\Gamma(\rho,\rho)$ we have $\int\alpha^{2d}\gamma(\alpha x,\alpha y)dy=\alpha^d\rho(\alpha x)$ and for the penultimate equality we used the change of variables $x'=\alpha^d x, y'=\alpha^d y$.
\end{proof}

\begin{remark}
(0) Trivially, the statements in Theorem \ref{prop} also hold for the non-normalized functional $E_{OT}$.

(1) Recall from Lemma \ref{DFT1} that the exact energy interaction is of form
$$V_{ee}[\Psi] = \int_{\R^6} \frac{1}{|x-y|}\rho_2(x,y)\, dx\, dy,$$
with the exact pair density $\rho_2$ being symmetric due to the antisymmetry condition on the underlying $\Psi$ in (\ref{class}). Property (a) of Theorem~\ref{prop} shows
that the approximate interaction energy
$$E_{OT}[\rho]=\int_{\R^6}\frac{\rho_2^{opt}(x,y)}{|x-y|}\,dx\,dy,$$
is of the same form, with the arising $\rho_2^{opt}$ being automatically symmetric as a consequence of optimality coupled with the weaker symmetry condition
that $\rho_2^{opt}$ has equal marginals.

(2) Property (c) of Theorem~\ref{prop} is a scaling property of the exact electron-electron energy $V_{ee}$ not shared by many approximate density functionals used in the physics
literature, such as the local density approximation (\ref{LDA}).

(3) The dilation behaviour of $E_{OT}$ equals that of the exact $V_{ee}$, as well as that of approximations like (\ref{LDA}).
\end{remark}

\section{Explicit example - equal radially symmetric marginals}

As in the last subsection, we continue to investigate the case of equal marginals $\mu=\nu$ with common density $\rho \, : \, \R^d\to [0,\infty)$.
Moreover, we assume that $\rho(x)>0$ for all $x\in~supp~\mu$. We will also assume throughout that the cost function $c$ satisfies conditions (A1)-(A4).

Throughout this section, for any dimension $d\in\bbn$ we will denote the optimal map by $T^{(d)}$.
In subsection~4.1 we will explicitly compute $T^{(1)}$, and in subsection~4.2 we use the one-dimensional analysis to explicitly compute $T^{(d)}$ when $\rho$ is radially
symmetric, that is to say when $\rho(x)=\lambda(|x|)$ for all $x\in\R^d$ and some function $\lambda$.

As turns out, in the above situations the optimal map is {\it universal} with respect to all cost functions satisfying (A1)-(A4),
but the fact that $c(x,y)$ decreases with the distance $|x-y|$ is essential.

\subsection{Explicit solution for equal marginals in one dimension}

Let $\mu=\nu\in\calp(\R)$ be equal marginals on $\R$.
Moreover, we define $I:=supp~\mu$.
Recall from Theorem~\ref{3.2} that the unique optimal measure has $c$-cyclically monotone support. This will help us to characterize the optimal
map $T^{(1)}$ in the following lemmas.

\begin{lemma}
\label{exclus}
Let $(x_1,y_1)$ and $(x_2,y_2)$ be two points in the support of the optimal map $\gamma$, that is $y_1=T^{(1)}(x_1)$ and $y_2=T^{(1)}(x_2)$. The possible configurations (not counting the symmetries between $(x_1,y_1)$ and $(x_2,y_2)$) are: $x_1<x_2\le y_1\le y_2$, $x_1\le y_2\le y_1\le x_2$, $y_1\le x_2<x_1\le y_2$, $y_1\le y_2\le x_1<x_2$, $x_1\le y_2\le x_2\le y_1$, $y_1\le x_2\le y_2\le x_1$, $x_1\le y_1\le x_2\le y_2$ and $y_1\le x_1\le y_2\le x_2$ (see also Figure~\ref{exclud} for the excluded configurations).
\begin{figure}
\begin{center}
\resizebox{!}{4cm}{\includegraphics{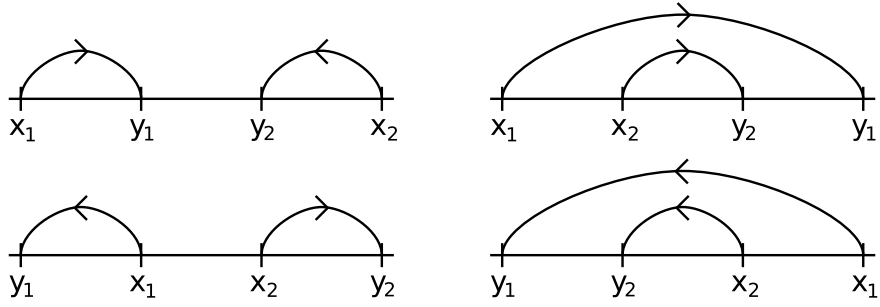}}
\caption{\label{exclud} Configurations excluded by Lemma~\ref{exclus}}.
\end{center}
\end{figure}
\end{lemma}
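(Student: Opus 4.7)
The plan is to invoke $c$-cyclical monotonicity of the support of $\gamma$, which holds by Theorem~\ref{exist}, in its simplest form, the two-cycle. For any two support points $(x_1, y_1)$ and $(x_2, y_2)$, the transposition $\sigma = (1\,2)$ in \eqref{cycleqn} yields
\begin{equation*}
l(|x_1-y_1|) + l(|x_2-y_2|) \;\le\; l(|x_1-y_2|) + l(|x_2-y_1|). \qquad (\star)
\end{equation*}
Assumption (A4) forces $x_i \ne y_i$ on the support (otherwise $C[\gamma] = +\infty$), and we may assume the two pairs are distinct, so generically the four points $x_1, x_2, y_1, y_2$ are pairwise distinct and admit a strict linear ordering on $\R$. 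There are $12$ such orderings modulo the symmetry $(x_1, y_1) \leftrightarrow (x_2, y_2)$. The plan is to show that precisely four of them are incompatible with $(\star)$ --- these are the configurations displayed in Figure~\ref{exclud} --- which leaves the eight orderings listed in the statement (with the weak inequalities absorbing degenerate cases such as $y_1 = x_2$).

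The four excluded orderings split into two types, both handled by the common parametrisation of writing the three consecutive inter-point gaps as $a, b, d > 0$ (so $b$ is the middle gap).

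\emph{Type (i)} (one pair strictly contained in the other): the orderings $x_1 < y_1 < y_2 < x_2$ and $y_1 < x_1 < x_2 < y_2$. In both cases the multiset of distances on the left of $(\star)$ equals $\{a, d\}$ (the two outer gaps), while the multiset on the right equals $\{a+b,\, b+d\}$ (each outer gap enlarged by the middle gap $b$). Since $l$ is strictly decreasing on $(0,\infty)$ by (A1), the pairwise matching $a \leftrightarrow a+b$ and $d \leftrightarrow b+d$ gives $l(a+b) < l(a)$ and $l(b+d) < l(d)$, whence the right-hand side of $(\star)$ is strictly smaller than the left, a contradiction.

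\emph{Type (ii)} (two intervals stacked with the same orientation): the orderings $x_1 < x_2 < y_2 < y_1$ and $y_1 < y_2 < x_2 < x_1$. Parametrising the first as $x_1 = 0$, $x_2 = a$, $y_2 = a+b$, $y_1 = a+b+d$, the left multiset is $\{a+b+d,\, b\}$ and the right multiset is $\{a+b,\, b+d\}$. These have the same sum $a+2b+d$, but the left is an ``outer'' pair while the right is an ``inner'' pair: $b < a+b \le b+d < a+b+d$. Strict convexity of $l$ by (A1) therefore gives $l(b) + l(a+b+d) > l(a+b) + l(b+d)$, again contradicting $(\star)$. The second configuration is handled by the same computation with the roles of the $x$'s and $y$'s interchanged. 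For each of the remaining eight orderings, $(\star)$ is compatible with suitable distance choices (for instance, by moving one pair far from the other so that the cross-distances become large, making the right-hand side of $(\star)$ small via the decrease of $l$); no obstruction arises and these are precisely the configurations in the list.

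The main obstacle I expect is the strict-convexity estimate in Type (ii), which hinges on spotting that the two multisets of distances share the same sum and stand in an outer-versus-inner relation; Type (i) follows purely from strict monotonicity, and the enumeration of orderings, while tedious, is routine and can be read off from Figure~\ref{exclud}.
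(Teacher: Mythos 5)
Your proposal is correct and follows essentially the same route as the paper: the two-point $c$-cyclical monotonicity inequality, strict monotonicity of $l$ to kill the two configurations where both cross-distances exceed both own-distances, and strict convexity (your equal-sum/majorization phrasing is just the paper's convex-combination argument with explicit gap variables) for the two same-orientation nested configurations. The enumeration of the twelve orderings modulo swapping the pairs, with exactly four excluded, matches the paper's case analysis.
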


\begin{proof}
If $(x_1,y_1)$ and $(x_2,y_2)$ are two points in the support of the optimal map $\gamma$, then
$$c(x_1,y_1)+c(x_2,y_2)\le c(x_1,y_2)+c(x_2,y_1).$$
Let us consider the excluded cases one by one.
\begin{enumerate}
\item [(i)] $x_1\le y_1<y_2\le x_2$.

Then, due to the fact that $l$ is strictly decreasing, it follows that
$$c(x_1,y_1)+c(x_2,y_2)> c(x_1,y_2)+c(x_2,y_1),$$
which contradicts the $c$-cyclically monotonicity property of the optimal solution.
\item [(ii)] $y_1\le x_1<x_2\le y_2$.

Similar to (i).
\item [(iii)] $x_1<x_2\le y_2<y_1$.

We have $y_2-x_2<y_2-x_1< y_1-x_1$. Therefore,
$y_2-x_1=t(y_2-x_2)+(1-t)(y_1-x_1)$ and $y_1-x_2=(1-t)(y_2-x_2)+t(y_1-x_1)$, where $t\in [0,1]$. Thus, using the strict convexity of $h$, we have
$$c(x_2,y_1)+c(x_1,y_2)<tc(x_2,y_2)+(1-t)c(x_1,y_1)+(1-t)c(x_2,y_2)+tc(x_1,y_1)=c(x_1,y_1)+c(x_2,y_2),$$
which contradicts the $c$-cyclically monotonicity property of the optimal solution.
\item [(iv)] $y_1<y_2\le x_2<x_1$

Similar to (iii).
\end{enumerate}
\end{proof}

\begin{figure}
\begin{center}
\resizebox{!}{4cm}{\includegraphics{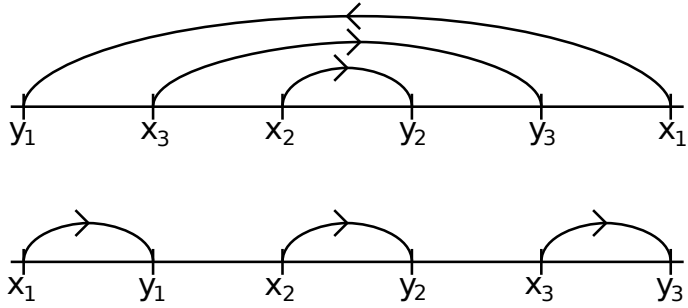}}
\caption{\label{newexclud} Example of configurations excluded by Lemma~\ref{config0}}.
\end{center}
\end{figure}

\begin{lemma}
\label{config0}
Assume $(x_1,y_1), (x_2,y_2)\in supp~\gamma$ are such that one of the following four configurations holds: $x_1<y_2<x_2<y_1$ or $y_1<x_2<y_2<x_1$ or $x_1<y_1<x_2<y_2$ or $y_1<x_1<y_2<x_2$. Then, if $(x_3,y_3)$ is another point in the $supp~\gamma$, none of the following configurations are possible: $x_i<x_k<y_j<x_j<y_k<y_i$, $x_i<y_k<y_j<x_j<x_k<y_i$, $y_i<x_k<x_j<y_j<y_k<x_i$, $y_i<y_k<x_j<y_j<x_k<x_i$, $x_i<y_i<x_j<y_j<x_k<y_k$ and $y_i<x_i<y_j<x_j<y_k<x_k$, where $i,j,k\in\{1,2,3\}$ (see also Figure \ref{newexclud}).
\end{lemma}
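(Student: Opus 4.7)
The plan is to apply the definition of $c$-cyclical monotonicity of $\text{supp}~\gamma$, guaranteed by Theorem~\ref{3.2}(b), with $n=3$. For the three distinct points $(x_1,y_1), (x_2,y_2), (x_3,y_3)$ and every permutation $\sigma$ of $\{1,2,3\}$ one has
\begin{equation*}
\sum_{m=1}^{3} c(x_m,y_m) \,\le\, \sum_{m=1}^{3} c(x_{\sigma(m)},y_m).
\end{equation*}
For each of the six forbidden orderings of $\{x_1,y_1,x_2,y_2,x_3,y_3\}$ on the real line, I would exhibit a permutation $\sigma$ that strictly violates this inequality, producing a contradiction.

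The only analytic tools needed are the two already used in the two-point argument of Lemma~\ref{exclus}: (i) the strict decrease of $l$ on $(0,\infty)$ from (A1), invoked whenever the permutation $\sigma$ yields three new pairs whose absolute distances are all at least as large as the originals, strictly so on at least one pair; (ii) the strict convexity of $l$, invoked whenever one distance on the right side is a nontrivial convex combination of two distances on the left side, exactly as in Lemma~\ref{exclus}(iii)--(iv). The hypothesis that $(x_1,y_1),(x_2,y_2)$ are in one of the four listed crossing configurations pins down the relative order of those four values, so in each of the six forbidden global orderings of the six-tuple the positions of $x_3$ and $y_3$ are determined, and the appropriate $\sigma$ (typically a transposition that ``uncrosses'' the bad subpattern) can be read off.

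For illustration: in configuration~5, $x_i<y_i<x_j<y_j<x_k<y_k$, the transposition swapping indices $i$ and $k$ pairs $x_i$ with $y_k$ and $x_k$ with $y_i$, strictly enlarging both of the involved distances while leaving the middle pair fixed; strict monotonic decrease of $l$ then delivers the desired strict improvement. In the ``interlaced'' configurations 1--4, where a nested pair meets a disjoint third pair, a transposition that straightens the nested arrangement reduces two new distances to nontrivial convex combinations of the original two, and strict convexity of $l$ produces the contradiction in complete parallel with Lemma~\ref{exclus}(iii).

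The principal obstacle is not conceptual but combinatorial: six forbidden orderings, three assignments of the index triple $(i,j,k)$, and four choices for the initial crossing configuration of $(x_1,y_1),(x_2,y_2)$ generate a substantial case list. This collapses considerably under two symmetries: the optimal measure is symmetric under $(x,y)\mapsto (y,x)$ by Theorem~\ref{prop}(a) (applicable since $\mu=\nu$ throughout this section), and everything is invariant under reflection $x\mapsto -x$. After quotienting by these symmetries one is left with only a handful of representative cases, each of which is handled by the mechanical recipe above.
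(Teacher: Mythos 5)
Your general framework (apply $c$-cyclical monotonicity with $n=3$ and exhibit a permutation that strictly lowers the cost) is the right one, and your treatment of configurations 1--4 is fine -- in fact those four are disposed of even faster than you suggest, since each already contains a two-point sub-pattern (e.g.\ $x_i<x_k\le y_k<y_i$ in the first one) that is directly forbidden by Lemma~\ref{exclus}(iii)--(iv); this is exactly what the paper does. The genuine gap is in the one case you actually work out, configuration~5 ($x_i<y_i<x_j<y_j<x_k<y_k$), which together with its mirror image is the only case requiring a real three-point argument. Your claim that the transposition swapping $i$ and $k$ ``strictly enlarg[es] both of the involved distances'' is false: while $|x_i-y_k|$ is the full span and certainly exceeds both originals, the other new distance $|x_k-y_i|$ need not exceed $|x_k-y_k|$ or $|x_i-y_i|$. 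Take $x_i=-100$, $y_i=0$, $x_j=0.0003$, $y_j=0.0006$, $x_k=0.001$, $y_k=100$: then $|x_k-y_i|=0.001$ while the original distances are $100$ and $99.999$, so for the Coulomb cost the transposition \emph{increases} the total cost by roughly $l(0.001)\approx 1000$. No appeal to monotonicity or convexity of $l$ can rescue a permutation that creates a near-diagonal pair out of two far-from-diagonal ones.

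The permutation that works -- and the one the paper uses -- is the $3$-cycle pairing $x_i$ with $y_j$, $x_j$ with $y_k$, and $x_k$ with $y_i$. Under the ordering of configuration~5 one has the one-to-one domination $|x_i-y_j|>|x_i-y_i|$, $|x_j-y_k|>|x_k-y_k|$, and $|x_k-y_i|>|x_j-y_j|$, so strict decrease of $l$ alone gives
\begin{equation*}
c(x_i,y_j)+c(x_j,y_k)+c(x_k,y_i)<c(x_i,y_i)+c(x_j,y_j)+c(x_k,y_k),
\end{equation*}
contradicting (\ref{cycleqn}). The essential point your recipe misses is that each new distance must be matched against a (possibly different) old distance that it strictly exceeds; the cycle achieves this by replacing the short middle interval $|x_j-y_j|$ with the longer $|x_k-y_i|$, whereas your transposition leaves the middle pair alone and manufactures a new short distance. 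So the proposal as written does not close the two nontrivial cases; it would need to be repaired by replacing the transposition with the cyclic permutation (or an equivalent matching argument).
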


\begin{proof}
Note that the first $4$ configurations are immediately excluded by Lemma~\ref{exclus}. Let us focus on the penultimate configuration. From the $c$-cyclically
monotonicity property, we have that
$$c(x_1,y_1)+c(x_2,y_2)+c(x_3,y_3)\le c(x_j,y_k)+c(x_i,y_j)+c(x_k,y_i).$$
But due to the fact that $h$ is strictly decreasing, we have that $c(x_i,y_i)>c(x_i,y_j)$, $c(x_j,y_j)>c(x_k,y_i)$ and $c(x_k,y_k)>c(x_j,y_k)$, which gives rise to a contradiction. The last configuration can be dealt with in a similar way.
\end{proof}

\begin{remark}
\label{config1}
Note that by Lemma~\ref{exclus}, if $(x_1,y_1),(x_2,y_2)$ and $(x_3,y_3)\in supp~\gamma$, with $x_1<x_2<x_3$ the following configurations are also not possible: $x_1<y_2<y_1<x_2<x_3<y_3$ and $x_1<y_2<y_1<y_3<x_2<x_3$. Similarly, the configurations $y_1<x_1<x_2<y_3<y_2<x_3$ and $x_1<x_2<y_1<y_3<y_2<x_3$ are not possible.
\end{remark}

\begin{remark}
\label{prob1config}
From Lemma \ref{exclus}, Lemma \ref{config0} and Remark \ref{config1}, it follows that the configurations $\mu$-a.e. possible are of form: $x_1<x_2<y_1<y_2$, $x_1<y_2<y_1<x_2$, $y_1<y_2<x_1<x_2$, $y_1<x_2<x_1<y_2$, $x_1\le y_2<x_2\le y_1$ and $y_1\le x_2<y_2\le x_1$ (see also Figure \ref{PC}).
\end{remark}

\begin{figure}[t]
\begin{center}
\resizebox{!}{4cm}{\includegraphics{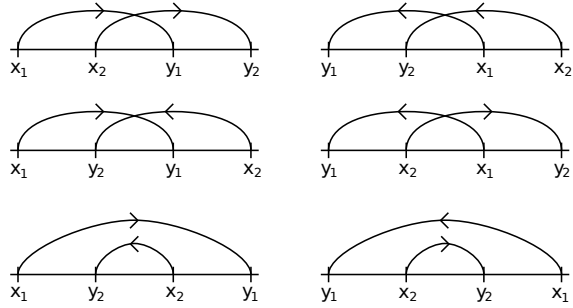}}
\caption{\label{PC}Possible configurations by Remark \ref{prob1config}}
\end{center}
\end{figure}

\begin{defn}
We say that $T^{(1)}$ has no points of decrease on $A\subseteq I$ if $\mu(\{x\in A:\exists x'\in A, x'>x,T^{(1)}(x)>T^{(1)} (x')\})=0$. We say that $T^{(1)}$ has points of decrease on $A$ with positive measure if $\mu(\{x\in A:\exists x'\in A, x'>x,T^{(1)}(x)>T^{(1)} (x')\})>0$. We say that $T^{(1)}$ is $\mu$-a.e. decreasing on $A\subseteq I$ if $\mu(\{x\in A:\exists x'\in A, x'>x,T^{(1)}(x)>T^{(1)} (x')\})=1$. We define similarly for $T^{(1)}$ the notions of points of increase on $A$ and $\mu$-increasing on $A$.

Note that we can assume that the set $B$ of such $x'$ above such that $T^{(1)}$ has points of decrease (respectively points of increase) on $A$ with positive measure, is also a set of positive measure. Otherwise, if the set of such $x'$ is of $\mu$-measure zero, we may consider the set $A\setminus B$, on which $T^{(1)}$ is $\mu$-a.e. decreasing (respectively $\mu$-a.e. increasing).
\end{defn}

\begin{lemma}
\label{nonincreas}
The map $T^{(1)}$ cannot be $\mu$-a.e. decreasing on any subset $A\subseteq I$.
\end{lemma}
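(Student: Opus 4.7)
The plan is to argue by contradiction: assume there exists a subset $A \subseteq I$ with $\mu(A) > 0$ on which $T^{(1)}$ is $\mu$-a.e. decreasing. I will extract from this hypothesis two incompatible conclusions about the set $A_< := \{x \in A : T^{(1)}(x) < x\}$.

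The main tool will be a short case analysis of the admissible decreasing configurations from Remark~\ref{prob1config}. After relabeling so that $x_1 < x_2$, any decreasing pair $(x_1,y_1),(x_2,y_2) \in supp~\gamma$ (so $y_1 > y_2$) must take one of the four forms
\[
x_1 < y_2 < y_1 < x_2, \quad y_2 < x_1 < x_2 < y_1, \quad x_1 \le y_2 < x_2 \le y_1, \quad y_2 \le x_1 < y_1 \le x_2.
\]
A direct inspection shows that in each of these four cases $y_1 > x_1$ and $y_2 < x_2$. In other words, the left endpoint of any decreasing pair moves strictly to the right under $T^{(1)}$, while the right endpoint moves strictly to the left.

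The first application is to the left endpoint. The hypothesis supplies, for $\mu$-a.e. $x \in A$, some $x' \in A$ with $x' > x$ and $T^{(1)}(x) > T^{(1)}(x')$, making $(x,x')$ a decreasing pair with $x$ on the left; hence $T^{(1)}(x) > x$. Therefore $A_> := \{x \in A : T^{(1)}(x) > x\}$ has full $\mu$-measure in $A$, and combined with Remark~\ref{zeromeasure} (which gives $\mu\{x : T^{(1)}(x) = x\} = 0$) this forces $\mu(A_<) = 0$.

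The same observation is then applied to right endpoints. Set $B := \{x' \in A : \exists\, x \in A,\ x < x',\ T^{(1)}(x) > T^{(1)}(x')\}$. By the remark just preceding the lemma concerning ``points of decrease,'' one may assume $\mu(B) > 0$ (otherwise one replaces $A$ by $A \setminus B$ as indicated there). But each $x' \in B$ is the right endpoint of a decreasing pair, so the case analysis gives $T^{(1)}(x') < x'$, whence $B \subseteq A_<$. This contradicts $\mu(A_<) = 0$, completing the argument. The only technical point requiring care is the four-case verification that $y_1 > x_1$ and $y_2 < x_2$ in every admissible decreasing configuration; once that is in hand, the rest is light measure-theoretic bookkeeping built on the assumption and on Remark~\ref{zeromeasure}.
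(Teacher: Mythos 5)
Your argument is correct in substance but takes a genuinely different route from the paper's. The paper works with \emph{three} points: starting from a decreasing pair in one of the admissible configurations of Remark~\ref{prob1config}, it lists every placement of a third point $(x_3,y_3)$ that would continue the decreasing pattern and checks that each is killed by Lemma~\ref{exclus} or Lemma~\ref{config0}. You stay at the level of \emph{pairs}: the key observation that every admissible decreasing configuration forces $T^{(1)}(x_1)>x_1$ for the left endpoint and $T^{(1)}(x_2)<x_2$ for the right endpoint is correct (all four cases check out, including the ones with equalities, since e.g. $x_1\le y_2<x_2\le y_1$ still gives $y_1\ge x_2>y_2\ge x_1$). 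Moreover this classification needs only the two-point Lemma~\ref{exclus} — the two configurations that Remark~\ref{prob1config} eliminates via the three-point Lemma~\ref{config0} are both increasing and never enter your case analysis — so your route is shorter and logically lighter, at the cost of a somewhat longer measure-theoretic epilogue. What each approach buys: the paper's shows directly that no three points of $supp~\gamma$ can be pairwise decreasing; yours extracts from each single decreasing pair a quantitative consequence ($T>\mathrm{id}$ on the left, $T<\mathrm{id}$ on the right) that is also what Step~3 of Theorem~\ref{tdescrip} ultimately needs.

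The one step to treat with more care is $\mu(B)>0$. Read literally, ``$\mu$-a.e. decreasing on $A$'' only requires that $\mu$-a.e. $x\in A$ \emph{possess} a partner $x'$; it does not by itself exclude the degenerate situation in which all partners lie in a single $\mu$-null subset of $A$ (say, one exceptional point serving as partner for everybody), in which case $\mu(B)=0$ and your contradiction does not close. The remark you invoke does say ``we can assume'' $\mu(B)>0$, but its stated justification (pass to $A\setminus B$) does not rescue you here: after deleting the partners, the map need no longer be $\mu$-a.e. decreasing on $A\setminus B$ in the sense of the definition, so the contradiction hypothesis is not preserved. The same genericity issue also touches your first application (the chosen partner $x'$ must be a point at which the configuration constraints of $supp~\gamma$ actually apply). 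To be fair, this is a looseness in the paper's definitions that its own three-point proof shares; to make your version airtight, either read ``$\mu$-a.e. decreasing'' as requiring the set of decreasing pairs to have positive $\mu\otimes\mu$-measure, or add a line showing that partners can always be chosen from a set of positive measure.
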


\begin{proof}
Asssume that  $T^{(1)}$ is $\mu$-a.e. decreasing on a subset $A\subseteq I$. Let $(x_i,y_i)_{i=1}^3=(x_i, T^{(1)}(x_i))_{i=1}^3$,
where $x_i\in A$ for $i\in\{1,2,3\}$. Recall now from Remark \ref{prob1config} the possible configurations by which $T^{(1)}$ is decreasing.

Assume first that $x_1<y_2<x_2<y_1$. Then the possibilities for $(x_3,y_3)$ such that $T^{(1)}$ is strictly decreasing, are: $x_3<x_1<y_2<x_2<y_1<y_3$,
$x_1<x_3<y_2<y_3<x_2<y_1$, $x_1<y_2<y_3<x_3<x_2<y_1$, $x_1<y_2<x_3<y_3<x_2\le y_1 $, $y_3<x_1<y_2<x_2<x_3<y_1$, $x_1<y_3<y_2<x_2<x_3<y_1 $,
$y_3<x_1<y_2<x_2<y_1<x_3$ and $x_1<y_3<y_2<x_2<y_1<x_3$. In view of Lemmas \ref{exclus} and \ref{config0}, each these possibilities can only happen on a set of $\mu$-a.e. measure $0$. The case $y_1<x_2<y_2<x_1 $ can be treated similarly.

Assume next that $x_1<y_2<y_1<x_2$. Then the possibilities for $(x_3,y_3)$ such that $T^{(1)}$ is strictly decreasing, are: $x_3<x_1<y_2<y_1<y_3<x_2$,
$x_3<x_1<y_2<y_1<x_2<y_3$, $x_1<x_3<y_2<y_3<y_1<x_2$, $x_1<y_2<x_3<y_3<y_1<x_2$, $x_1<y_2<y_3<x_3<y_1<x_2$, $x_1<y_2<y_3<y_1<x_3<x_2$, $y_3<x_1<y_2<y_1<x_2<x_3$ and $x_1<y_3<y_2<y_1<x_2<x_3$. In view of Lemmas \ref{exclus} and \ref{config0}, each these possibilities can only happen on a set of $\mu$-a.e. measure $0$. The case $y_1<x_2<x_1<y_2$ can be treated in a similar way.
\end{proof}

\begin{remark}
\label{finite}
We have $supp~\gamma=I\times I$;
 in particular, $Im(T^{(1)})=I$, where we denoted by $Im(T^{(1)})$ the image of $T^{(1)}$.
\end{remark}
\begin{proof}
Note first that $\gamma(I\times I)=\gamma(I\times\R)=\mu(I)=1$. Let us now assume that $supp~\gamma=I\times J$, with $I,J\subseteq\R$ and $J\subset I$, with $\mu(I\setminus J)>0$.
Then
$1=\gamma(I\times J)=\gamma(I\times \R)=\mu(I)=\gamma(\R\times J)=\mu(J)$, which contradicts the definition of the support $I$ of the marginals.
\end{proof}

For the proof of the next theorem, we will use the results of Theorems \ref{unique} and \ref{geom}; in particular, we will use the properties of the map $T^{(1)}$, as given in those two theorems.

\begin{theorem}
\label{tdescrip}
Let $\alpha,\beta\in\R$ with $\alpha<\beta$ and let $I=[\alpha,\beta]$. There exists $a\in I$ such that $T^{(1)}$ is $\mu$-a.e. increasing on $[\alpha,a)$ and $\mu$-a.e. increasing on $(a,\beta]$, with $T^{(1)}(\alpha)=T^{(1)}(\beta)=a$, $T^{(1)}(a_-)=\beta$ and $T^{(1)}(a_+)=\alpha$, with discontinuity at $a$. Except on a set of $\mu$-measure zero, we have
\begin{enumerate}
\item [(a)] For all $x_1,x_1'\in (\alpha,a)$ with $x_1<x_1'$, we have $x_1<x_1'<T^{(1)}(x_1)< T^{(1)}(x_1')$, with $T^{(1)}(x)\in (a,\beta)$;
\item [(b)] For all $x_2,x_2'\in (a,\beta)$ with $x_2<x_2'$, we have $T^{(1)}(x_2)< T^{(1)}(x_2')<x_2<x_2'$, with $T^{(1)}(x)\in (\alpha,a)$;
\item [(c)] For every interval $(l_1l_2)\subseteq I$, we have $T((l_1,l_2))=(r_1,r_2)\subseteq I$.
\end{enumerate}
Moreover, $a$ is such that $\mu([\alpha,a])=\mu([a,\beta])=\frac{1}{2}$ (see also Figure \ref{OM}).
\begin{figure}
\begin{center}
\resizebox{15cm}{!}{\includegraphics{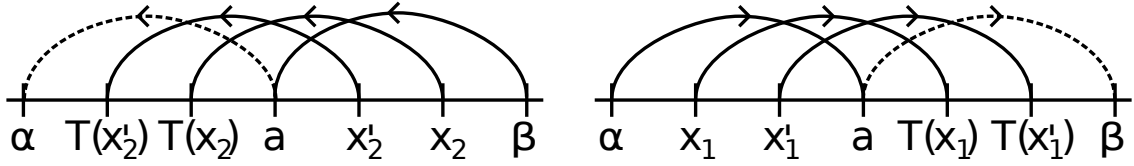}}
\caption{\label{OM} Optimal map configurations}
\end{center}
\end{figure}
\end{theorem}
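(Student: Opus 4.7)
The plan is to leverage the list of $\mu$-a.e. admissible pairwise configurations already established in Remark~\ref{prob1config} to extract the global structure of $T^{(1)}$, then to pin down the splitting point $a$ using the symmetry of the optimal measure from Theorem~\ref{prop}(a). First, using Remark~\ref{zeromeasure}, partition $I$ up to a $\mu$-null set into
\[
  A := \{x\in I\mid T^{(1)}(x) > x\}, \qquad B := \{x\in I\mid T^{(1)}(x) < x\}.
\]
Inspecting the six surviving configurations in Remark~\ref{prob1config}, the only one with both of $x_1,x_2$ in $A$ is configuration~1, which forces $T^{(1)}(x_1) < T^{(1)}(x_2)$ whenever $x_1<x_2$; likewise configuration~3 does the same job for $B$. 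Hence $T^{(1)}$ is $\mu$-a.e.\ strictly increasing on each of $A$ and $B$.

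Next, I would show that $A$ lies to the left of $B$: if $x_1<x_2$ with $x_1\in B$ and $x_2\in A$, then $T^{(1)}(x_1)<x_1<x_2<T^{(1)}(x_2)$, which is precisely the configuration $y_1\le x_1<x_2\le y_2$ excluded by Lemma~\ref{exclus}(ii). Thus there exists $a\in I$ with $A\subseteq[\alpha,a)$ and $B\subseteq(a,\beta]$ up to null sets. To determine $a$, I invoke Theorem~\ref{prop}(a): the optimal measure $\gamma_T=(\mathrm{id},T^{(1)})_\#\mu$ is symmetric, and combining this with the existence of the inverse $S$ in Theorem~\ref{unique}(c,d) and the uniqueness in Theorem~\ref{geom} shows $S=T^{(1)}$ $\mu$-a.e., i.e.\ $T^{(1)}$ is a $\mu$-a.e.\ involution. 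Then $x\in A$ forces $T^{(1)}(x)\in B$ (else $T^{(1)}(T^{(1)}(x))=x<T^{(1)}(x)$ would fail), so $T^{(1)}(A)\subseteq B$ and symmetrically $T^{(1)}(B)\subseteq A$. Since $T^{(1)}_\#\mu=\mu$, this gives $\mu(A)=\mu(B)=\tfrac12$, hence $\mu([\alpha,a])=\mu([a,\beta])=\tfrac12$.

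With the partition and monotonicity in hand, $T^{(1)}$ restricted to $[\alpha,a]$ is the $\mu$-a.e.\ strictly increasing map pushing $\mu|_{[\alpha,a]}$ forward to $\mu|_{[a,\beta]}$, so it coincides with the increasing rearrangement; writing $F(x):=\mu([\alpha,x])$, one obtains the explicit formula
\[
  T^{(1)}(x) = F^{-1}\bigl(F(x)+\tfrac12\bigr)\ \text{for }x\in[\alpha,a],\qquad
  T^{(1)}(x) = F^{-1}\bigl(F(x)-\tfrac12\bigr)\ \text{for }x\in[a,\beta].
\]
The hypothesis $\rho>0$ on $\operatorname{supp}\mu$ makes $F$ continuous and strictly increasing on $I$, yielding the claimed boundary values $T^{(1)}(\alpha)=T^{(1)}(\beta)=a$, $T^{(1)}(a_-)=\beta$, $T^{(1)}(a_+)=\alpha$, together with the claimed strict order $x<x'<T^{(1)}(x)<T^{(1)}(x')$ on $(\alpha,a)$ (part (a)) and its mirror on $(a,\beta)$ (part (b)). Claim (c) follows since $T^{(1)}$ is continuous and strictly monotone on each of $(\alpha,a)$ and $(a,\beta)$, so an interval $(l_1,l_2)$ either lies on one side of $a$ (where its image is automatically an interval) or straddles $a$, in which case the two images are $(T^{(1)}(l_1),a)$ and $(a,T^{(1)}(l_2))$ joined at $a$.

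The main obstacle I expect is the symmetry/involution step used to obtain $\mu(A)=\mu(B)=\tfrac12$: one needs to upgrade symmetry of the joint measure $\gamma_T$ into an $\mu$-a.e.\ pointwise identity $T^{(1)}\circ T^{(1)}=\mathrm{id}$, which in turn requires both the uniqueness of the optimal map from Theorem~\ref{geom} and the a.e.\ injectivity of $T^{(1)}$ implied by Theorem~\ref{unique}(c,d). Once that is clean, the remainder of the argument is essentially a case analysis on the six configurations of Remark~\ref{prob1config} coupled with a standard identification of measure-preserving monotone maps via the distribution function $F$.
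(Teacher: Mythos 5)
Your argument is correct in substance but organized quite differently from the paper's. You decompose $I$ directly into $A=\{T^{(1)}>x\}$ and $B=\{T^{(1)}<x\}$ (legitimate by Remark~\ref{zeromeasure}), read off monotonicity on each piece and the ordering ``$A$ to the left of $B$'' from the surviving pairwise configurations, and then obtain the median property and the boundary values by identifying $T^{(1)}|_{[\alpha,a]}$ with the increasing rearrangement $F^{-1}(F(\cdot)+\tfrac12)$. The paper instead proceeds by a chain of exclusion steps (no global monotone increase, no mixing of increase and decrease on a positive-measure interval, identification of the initial and terminal pieces), uses $supp\,\gamma=I\times I$ (Remark~\ref{finite}) to force the boundary values, and gets $\mu([\alpha,a])=\tfrac12$ by the elementary marginal computation $\gamma((\alpha,a)\times(a,\beta))=\mu((\alpha,a))$ and $\gamma((a,\beta)\times(\alpha,a))=\mu((\alpha,a))$, summing to $1$. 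Your route is arguably cleaner on the monotonicity side, and your explicit formula is essentially the content of the paper's Theorem~\ref{onedimrep}, which you are proving along the way. The step you flag as the main obstacle --- upgrading the symmetry of $\gamma_T$ from Theorem~\ref{prop}(a) plus the inverse map of Theorem~\ref{unique}(c),(d) to the pointwise involution $T^{(1)}\circ T^{(1)}=\mathrm{id}$ $\mu$-a.e. --- does go through (two maps whose graph measures over the same base measure coincide must agree a.e.), but it is heavier than necessary: once you know that for a.e. $x_1<x_2$ in $A$ only the nested configuration $x_1<x_2<T^{(1)}(x_1)<T^{(1)}(x_2)$ survives, you already get $T^{(1)}(x_1)\ge \operatorname{ess\,sup}A=a$ for a.e. $x_1\in A$, i.e. $T^{(1)}(A)\subseteq[a,\beta]$ without any involution; the measure $\tfrac12$ then follows from the marginals exactly as in the paper.

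One concrete slip: your treatment of claim (c) for an interval straddling $a$ is wrong. By your own parts (a) and (b), $T^{(1)}((l_1,a))=(T^{(1)}(l_1),\beta)$ and $T^{(1)}((a,l_2))=(\alpha,T^{(1)}(l_2))$, so the image of $(l_1,l_2)\ni a$ is $(\alpha,T^{(1)}(l_2))\cup(T^{(1)}(l_1),\beta)$, two intervals at opposite ends of $I$ rather than two pieces ``joined at $a$''. Claim (c) only holds as stated for intervals contained in one of $(\alpha,a)$ or $(a,\beta)$ (which is the only way it is used later, e.g.\ in Theorem~\ref{onedimrep}); you should restrict the claim accordingly rather than assert it for straddling intervals.
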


\begin{proof}
Recall first from Remark \ref{zeromeasure} that
$$\mu(\{x\in\R^d:T^{(1)}(x)=x\})=0.$$
Recall also from Theorem \ref{unique} that $T$ is a bijective map; in particular, $\mu(x\in I:\exists y\in I\setminus \{x\}, T^{(1)}(x)=T^{(1)}(y))=0$.

\textbf{Step 1. $\boldsymbol{T^{(1)}}$ cannot be $\boldsymbol{\mu}$-a.e. increasing on $I$:}

Assume that $T^{(1)}$ is $\mu$-a.e. increasing on $I$. Then $T^{(1)}$ is $\mu$-a.e. strictly increasing and it can only be increasing as described in Remark \ref{prob1config}. Suppose first that for $\mu$-a.e. all $x_1,x_2\in I$, we have $x_1<x_2<T^{(1)}(x_1)<T^{(1)}(x_2)$. Then for each $x\in [\alpha,\beta]$, we have two possibilities: $T^{(1)}(\alpha,x)=(\alpha,T^{(1)}(x))$ or $T^{(1)}(\alpha,x)\subseteq (c,T^{(1)}(x))$, with $\alpha<c$. If $T^{(1)}(\alpha,x)=(\alpha,T^{(1)}(x))$, using $\mu=\mu\circ (T^{(1)})^{-1}$ and the fact that $\rho>0$, we get that $T^{(1)}(x)=x~\mu$-a.e., which contradicts Remark~\ref{zeromeasure}. If $T^{(1)}(\alpha,x)\subseteq (c,T^{(1)}(x))$, then $T^{(1)}(\alpha,\beta)\subseteq (c,T^{(1)}(\beta))$. Using again $\mu=\mu\circ (T^{(1)})^{-1}$, we get that $\mu(\alpha,\beta)=1\le\mu(c,T^{(1)}(\beta))$, where $\alpha<c$, which would contradict the definition of $supp~\mu=[\alpha,\beta]$. The case with $T^{(1)}(x_1)<T^{(1)}(x_2)<x_1<x_2$ for $\mu$-a.e. all $x_1, x_2\in I$, can be discounted the same way.

\textbf{Step 2. $\boldsymbol{T^{(1)}}$ cannot have both points of increase and points of decrease on every interval $\boldsymbol{A\subseteq I}$ with positive measure:}

Assume that there exists an interval $A\subseteq I$, such that $T^{(1)}$ has both points of increase and points of decrease on $A$ with positive measure. Take an arbitray point $x_1\in A$ such that $\exists x'_1\in A, x'_1>x_1$ and $T^{(1)}(x_1')>T^{(1)}(x_1)$. By assumption, for a subset of such $x_1$ (and for a subset of $x_1'$) in $A$ of positive measure, $\exists x_2, x_2'\in (x_1,x_1')$ with $x_2'>x_2$ and  $T^{(1)}(x_2')<T^{(1)}(x_2)$. Assume now that $x_1<x_1'<T^{(1)}(x_1)<T^{(1)}(x_1')$. Due to the $\mu$-a.e. possible configurations as given by Remark \ref{prob1config}, $x_2'$  is such that $T^{(1)}(x_2')<x_2'$. Then the $c$-cyclical monotonicity of the optimal support fails for $(x_1',T^{(1)}(x_1'))$ and $(x_2',T^{(1)}(x_2'))$. If we denote by $B$ the set of such $x_1'$, we have $\gamma(B,T^{(1)}(B))=\mu(B)>0$, which contradicts the assumption on the optimal support. The case with  $T^{(1)}(x_1)<T^{(1)}(x_1')<x_1<x_1'$ can be treated similarly so its proof will be omitted.

\textbf{Step 3. There exists $\boldsymbol{(\alpha,\alpha_1),(\beta_1,\beta)\subset I}$, with $\boldsymbol{\alpha_1\le\beta_1}$, such that $\boldsymbol{\mu}$-a.e. for all $\boldsymbol{x_1\in (\alpha,\alpha_1)}$ we have $\boldsymbol{T^{(1)}(x_1)>x_1}$ and $\boldsymbol{\mu}$-a.e. for all $\boldsymbol{x_2\in (\beta_1,\beta)}$ we have $\boldsymbol{T^{(1)}(x_2)< x_2}$:}

Recall first the possible configurations, as given by Remark~\ref{prob1config}. Note now that in view of Step 1, Step 2 and of Lemma \ref{nonincreas}, there exists $(\alpha,\alpha_1), (\beta_1,\beta)\subset I$, with $\alpha_1\le\beta_1$ on which $T^{(1)}$ is an increasing function. It remains to show that the optimal map can only be such that $\mu$-a.e. for all $x_1\in (\alpha,\alpha_1)$, we have $T^{(1)}(x_1)> x_1$ and $\mu$-a.e. for all $x_2\in (\beta_1,\beta)$, we have $T^{(1)}(x_2)<x_2$.

Let us consider the alternatives one by one. Suppose to begin with that $\mu(x_1\in (\alpha,\alpha_1):T^{(1)}(x_1)<x_1)>0$ and $\mu(x_2\in (\beta_1,\beta):x_2<T^{(1)}(x_2))>0$. By Lemma \ref{exclus}, if $x_1\in (\alpha,\alpha_1)$ with $T^{(1)}(x_1)<x_1$ and $x_2\in (\beta_1,\beta)$ with $x_2<T^{(1)}(x_2))$, $c$-cyclical monotonicity of the support fails for $(x_1,T^{(1)}(x_1))$ and $(x_2,T^{(1)}(x_2))$. Assume next that $\mu(x_1\in (\alpha,\alpha_1):T^{(1)}(x_1)<x_1)>0$ and $\mu(x_2\in (\beta_1,\beta):T^{(1)}(x_2)<x_2)>0$. In view of Step $1$, of Lemma \ref{nonincreas} and of Lemma \ref{finite}, there exists then some subset $(\alpha_1',\beta_1')\subseteq (\alpha_1,\beta_1)$ such that with positive measure, $T^{(1)}$ has both points of increase and points of decrease on $(\alpha_1',\beta_1')$. But this contradicts the conclusion of Step 2 and therefore our assumption has to be wrong. The case with $\mu(x_1\in (\alpha,\alpha_1): x_1<T^{(1)}(x_1))>0$ and $\mu(x_2\in (\beta_1,\beta):x_2<T^{(1)}(x_2))>0$ can be reasoned similarly, so its proof will be omitted.

\textbf{Step 4. $\boldsymbol{T^{(1)}(\alpha)\ge T^{(1)}(\beta)}$:}

Assume that $T^{(1)}(\alpha)<T^{(1)}(\beta)$. By Step $3$, there exist $(\alpha,\alpha')\subset (\alpha,T^{(1)}(\alpha))$ and $(\beta',\beta)\subset (T^{(1)}(\beta),\beta)$ on which $T^{(1)}$ is as described in (a) and (b). Then, if $x_1\in (\alpha,\alpha')$ and $x_2\in (\beta',\beta)$,  $c$-cyclical monotonicity of $supp~\gamma$ would fail for $(x_1,T^{(1)}(x_1))$ and $(x_2,T^{(1)}(x_2))$, as shown in Lemma \ref{exclus} (i). Therefore, $T^{(1)}(\alpha)\ge T^{(1)}(\beta)$.

\textbf{Step 5. There exists $\boldsymbol{b\in I}$ such that $\boldsymbol{T^{(1)}}$ is as described in (a) and (b), for $\boldsymbol{(\alpha,b)\subset I}$ and for $\boldsymbol{(b,\beta)\subset I}$, respectively:}

Note that by Lemma \ref{nonincreas}, Step 1 and Step $2$, $T^{(1)}$ has to be $\mu$-a.e. increasing on a certain number of sub-intervals of $(\alpha_1,\beta_1)$. On any such sub-intervals, either $T^{(1)}(x)<x$ $\mu$-a.e. or $T^{(1)}(x)>x$ $\mu$-a.e. In both these cases, due to the form of $T^{(1)}$ on $(\alpha,\alpha_1)$ and $(\alpha,\alpha_1)$, as proved in Step $3$, the $c$-cyclical monotonicity of the support would fail on a set of positive measure unless $\mu((\alpha_1,\beta_1))=0$.

\textbf{Step 6. For every interval $\boldsymbol{(l_1l_2)\subseteq I}$, we have $\boldsymbol{T((l_1,l_2))=(r_1,r_2)\subseteq I}$:}

This is a simple consequence of Step $5$ and of Remark \ref{finite}.

\textbf{Step 7. $\boldsymbol{b=T^{(1)}(\alpha)=T^{(1)}(\beta)}$, $\boldsymbol{T^{(1)}(b_-)=\beta}$ {and} $\boldsymbol{T^{(1)}(b_+)=\alpha}$:}

Note first that $T^{(1)}(b_-)=\beta$ and $T^{(1)}(b_+)=\alpha$ or else $(\alpha,a)$ and $(a,\beta)$ will be mapped into a smaller interval than $I$, which contradicts Remark \ref{finite}. By Step 4, we have $T^{(1)}(\alpha)\ge T^{(1)}(\beta)$. It remains to prove that $b=T^{(1)}(\alpha)=T^{(1)}(\beta)$. If this does not hold, the alternatives are: $T^{(1)}(\alpha)\ge b>T^{(1)}(\beta)$, $T^{(1)}(\alpha)>T^{(1)}(\beta)\ge b$ and $T^{(1)}(\beta)<T^{(1)}(\alpha)\le b$. We will only show the reasoning for the case $T^{(1)}(\alpha)\ge b>T^{(1)}(\beta)$, as the other two possibilities can be dealt with in a
similar way. In this first case, we map $(\alpha,b)$ to $(T^{(1)}(\alpha),\beta)$ and $(b,\beta)$ to $(\alpha, T^{(1)}(\beta))$. Therefore, $supp~\gamma\subset I\times ((\alpha, T^{(1)}(\beta))\cup (T^{(1)}(\alpha),\beta)\neq I\times I$, which contradicts Remark \ref{finite}.

\textbf{Step 8. $\boldsymbol{\mu([\alpha,a])=\mu([a,\beta])=\frac{1}{2}}$:}

$(\alpha,a)$ is mapped into $(a,\beta)$ and $(a,\beta)$ is mapped into $(\alpha,a)$. Therefore
$$\gamma((\alpha,a)\times (a,\beta))+\gamma((a,\beta)\times (\alpha,a))=1.$$
But
$$\gamma((\alpha,a)\times (a,\beta))=\gamma((\alpha,a)\times I)=\mu((\alpha,a)), $$
as $\gamma((\alpha,a)\times (\alpha,a))=0$. Similarly,
$$\gamma((a,\beta)\times (\alpha,a))=\mu((\alpha,a)).$$
It follows that $2\mu((\alpha,a))=1$ or $\mu((\alpha,a))=\frac{1}{2}$.
\end{proof}

\begin{theorem}\label{onedimrep}
Assume that $\mu=\nu$ with density $\rho(x)>0$ on $I=[\alpha,\beta]$, where $\alpha,\beta\in\R\cup\{\pm\infty\}$. Let $\mu_1(x):=\mu((\alpha,x))$, $\bar{\mu}_1:=\mu((x,a))$ for $x\in (\alpha,a)$, $\mu_2(x):=\mu((x,\beta))$ and $\bar{\mu}_2:=\mu((a,x))$, for $x\in (a,\beta)$. If $x\in (\alpha,a)$, we have $T^{(1)}(x)={\bar{\mu}_2}^{-1}(\mu_1(x))$ and if $x\in (a,\beta)$, we have $T^{(1)}(x)={\bar{\mu}_1}^{-1}(\mu_2(x))$.
\end{theorem}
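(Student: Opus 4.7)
The plan is to read off the explicit formula directly from the geometric structure of $T^{(1)}$ established in Theorem \ref{tdescrip}, combined with the fact that $T^{(1)}$ pushes $\mu$ forward to itself. The key observation is that on each of the two sub-intervals $(\alpha,a)$ and $(a,\beta)$, Theorem \ref{tdescrip} tells us $T^{(1)}$ is increasing and maps the sub-interval onto the opposite one, with the endpoint correspondences $T^{(1)}(\alpha^+)=a$, $T^{(1)}(a^-)=\beta$, $T^{(1)}(a^+)=\alpha$, and $T^{(1)}(\beta^-)=a$. So $T^{(1)}$ restricted to each sub-interval is an increasing bijection onto the other, and part~(c) of Theorem~\ref{tdescrip} guarantees it sends open sub-intervals to open sub-intervals.

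First I would handle the case $x\in(\alpha,a)$. Because $T^{(1)}$ is increasing on $(\alpha,a)$ with $T^{(1)}(\alpha^+)=a$, part~(c) of Theorem~\ref{tdescrip} forces
\[
T^{(1)}\bigl((\alpha,x)\bigr)=\bigl(a,\,T^{(1)}(x)\bigr).
\]
Since $T^{(1)}_\#\mu=\mu$ (from Theorem~\ref{unique}), applying $\mu$ to both sides and using the bijectivity of $T^{(1)}$ on $I$ off a $\mu$-null set yields
\[
\mu_1(x)=\mu\bigl((\alpha,x)\bigr)=\mu\bigl((a,T^{(1)}(x))\bigr)=\bar\mu_2\bigl(T^{(1)}(x)\bigr).
\]
The assumption $\rho>0$ on $I$ makes $\bar\mu_2$ strictly increasing on $(a,\beta)$, hence invertible, and inverting gives the claimed $T^{(1)}(x)=\bar\mu_2^{-1}(\mu_1(x))$.

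For the case $x\in(a,\beta)$, the argument is symmetric: $T^{(1)}$ is increasing on $(a,\beta)$ and maps onto $(\alpha,a)$ with $T^{(1)}(\beta^-)=a$, so part~(c) yields $T^{(1)}((x,\beta))=(T^{(1)}(x),a)$. Applying the pushforward identity gives
\[
\mu_2(x)=\mu\bigl((x,\beta)\bigr)=\mu\bigl((T^{(1)}(x),a)\bigr)=\bar\mu_1\bigl(T^{(1)}(x)\bigr),
\]
and again $\bar\mu_1$ is strictly increasing on $(\alpha,a)$ by positivity of $\rho$, so $T^{(1)}(x)=\bar\mu_1^{-1}(\mu_2(x))$.

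There is no real obstacle here beyond bookkeeping: the substance of the theorem was already done in Theorem~\ref{tdescrip}. The only point that needs a brief sentence is the well-definedness of the inverses $\bar\mu_1^{-1}$ and $\bar\mu_2^{-1}$, which follows from $\rho>0$. One should also mention that the identities hold $\mu$-a.e., consistent with the $\mu$-a.e. statements in Theorem~\ref{tdescrip}, since the push-forward identity $T^{(1)}_\#\mu=\mu$ combined with the strict monotonicity of $T^{(1)}$ on each sub-interval gives the endpoint-to-endpoint matching of measures on intervals used above.
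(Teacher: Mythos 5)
Your proposal is correct and follows essentially the same route as the paper's own proof: both use the identity $T^{(1)}((\alpha,x))=(a,T^{(1)}(x))$ (resp.\ its mirror on $(a,\beta)$) derived from Theorem~\ref{tdescrip}, apply the push-forward identity $T^{(1)}_{\#}\mu=\mu$ to equate $\mu_1(x)$ with $\bar{\mu}_2(T^{(1)}(x))$, and invert using the strict monotonicity guaranteed by $\rho>0$. Your write-up is if anything slightly more careful about justifying the interval image and the a.e.\ qualifications.
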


\begin{proof}
We will use the fact that $\mu\circ T^{(1)}=\mu$ to find $T^{(1)}$. Let $x\in (\alpha,a)$. Then due to the properties of $T^{(1)}$ from Theorem~\ref{tdescrip}, it follows that $T^{(1)}((\alpha,x)=(a,T^{(1)}(x))$. Therefore,
$$\mu_1(x)=\mu((\alpha,x))=\mu((a,T^{(1)}(x))=\bar{\mu}_2(T^{(1)}(x)).$$
We know that $\rho(x)>0$. Due to the fact that $T^{(1)}(x)$ is increasing on $(\alpha,a)$ and with $T^{(1)}(a_-)=\beta$, $\mu_2(T^{(1)}(x)$ is a a strictly increasing function. We can take inverses and have
$$T^{(1)}(x)={\bar{\mu}_2}^{-1}(\mu_1(x)).$$
A similar reasoning holds for $x\in (a,\beta)$.
\end{proof}

\subsection{Equal radially symmetric marginals in dimension $d$}

We assume in this subsection that the marginals $\mu$ and $\nu$ are radially symmetric and in $\calp(\R^d)$.
As before, we suppose that the cost function is given by $c(x,y)=\ell(|x-y|)\ge 0$, with $c$ and $\ell$
satisfying (A1)--(A4).

\begin{theorem}
\label{bigdim}
Suppose that $\mu=\nu$, with common density $\rho(x)=\lambda(|x|)$ for all $x\in~supp~\mu$.
Moreover, we assume that $\rho(x)>0$ for all $x\in~supp~\mu$. Then the optimal
transport map $T^{(d)}$ has to be radially symmetric itself, that is
\begin{equation}
\label{trepres}
   T^{(d)}(x)=g(|x|)\frac{x}{|x|},\quad x\in\R^d,
\end{equation}
for some function $g\, : \, [0,\infty)\to\R$. Moreover $g\le 0$, and $g$
is an increasing function with $g(0_+)=-\infty$ and $g(+\infty)=0$.
\end{theorem}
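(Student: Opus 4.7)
My plan is to reduce the $d$-dimensional problem to a one-dimensional radial problem by exploiting rotational symmetry, and then to identify the optimal radial map via $c$-cyclical monotonicity combined with the pushforward constraint.

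First, I would force the radial structure of $T^{(d)}$. Because $\mu=\nu$ and $c(x,y)=\ell(|x-y|)$ are invariant under every orthogonal transformation $R$, the conjugated map $T_R(x):=R^{-1}T^{(d)}(Rx)$ is again an admissible transport plan with the same total cost. Uniqueness of the optimal map (Theorem~\ref{uniqgeom}) then gives $T_R=T^{(d)}$ $\mu$-a.e.\ for every $R$. Applying this to all rotations fixing the line through $x$ confines $T^{(d)}(x)$ to that line, so there exists a scalar function $g:(0,\infty)\to\R$, defined $\mu$-a.e., with $T^{(d)}(x)=g(|x|)\frac{x}{|x|}$.

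Next I would pin down the sign of $g$. For any measurable $\tilde g$ with $|\tilde g|=|g|$, the map $\tilde T(x)=\tilde g(|x|)\frac{x}{|x|}$ still commutes with rotations, so $\tilde T_{\#}\mu$ is rotation invariant, and on the radial sets $\{|y|\le s\}$ its preimage coincides with $T^{(d)\,-1}(\{|y|\le s\})$ because both depend only on $|g|$. Hence $\tilde T_{\#}\mu=\mu$, so $\tilde T$ is admissible. The actual cost on the fibre is $\ell(||x|-\tilde g(|x|)|)$, and since $\ell$ is strictly decreasing by (A1), the distance, hence the integrand, is strictly improved when $\tilde g$ is taken with the opposite sign to $|x|>0$. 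Optimality of $g$ therefore forces $g\le 0$ $\mu$-a.e.

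For the monotonicity step, I would invoke $c$-cyclical monotonicity of the support of the optimal plan (Theorem~\ref{3.2}(b)). Using rotation equivariance, I may assume two sample points $x_1=r_1 e$, $x_2=r_2 e$ lie on a common ray with $r_1<r_2$; writing $a_i:=-g(r_i)\ge 0$, the two-point cyclic inequality reads
\beq
\ell(r_1+a_1)+\ell(r_2+a_2)\;\le\;\ell(r_1+a_2)+\ell(r_2+a_1).
\eeq
If $a_1<a_2$ then $\{r_1+a_1,r_2+a_2\}$ is a strict majorization of $\{r_1+a_2,r_2+a_1\}$ with the same sum, so the strict convexity of $\ell$ from (A1) gives the reverse strict inequality, a contradiction. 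Equality $a_1=a_2$ with $r_1\ne r_2$ is excluded on a set of positive measure by injectivity of the optimal map (Theorem~\ref{unique}). Hence $|g|$ is strictly decreasing, i.e.\ $g$ is strictly increasing. I expect this monotonicity step to be the main technical point: one must carefully combine the pairwise inequality from $c$-cyclical monotonicity with the strict convexity of $\ell$, and use rotation equivariance to align the two sample points radially so that only one-dimensional information remains.

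Finally, given that $g$ is negative and increasing, I would recover the explicit formula and the boundary values from the pushforward condition. Testing $T^{(d)}_{\#}\mu=\mu$ on $\{|y|\le s\}$ and writing $\tilde F(s):=\int_0^s |S^{d-1}| t^{d-1}\lambda(t)\,dt$ for the radial CDF, the monotonicity of $-g$ gives
\beq
\tilde F(-g(r))\;=\;1-\tilde F(r),\quad\mbox{i.e.}\quad g(r)=-\tilde F^{-1}\bigl(1-\tilde F(r)\bigr).
\eeq
Under the implicit hypothesis that the radial support is unbounded (so that $\tilde F$ is a strictly increasing continuous bijection from $[0,\infty)$ onto $[0,1)$), this immediately gives $g(0_+)=-\tilde F^{-1}(1)=-\infty$ and $g(+\infty)=-\tilde F^{-1}(0)=0$, completing the proof.
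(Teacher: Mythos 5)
Your proposal is correct, and its first two steps (rotation equivariance of $T^{(d)}$ via uniqueness of the optimal plan, then the sign of $g$ via the strict monotonicity of $\ell$ and the observation that the pushforward of a radially symmetric measure under a radial map depends only on $|g|$) coincide with Steps 1 and 2 of the paper's proof. Where you genuinely diverge is in the second half. The paper passes to polar coordinates and shows that the restriction of the Monge functional to radial maps equals a one-dimensional Monge functional $I_{\rho_1}$ with the symmetric density $\rho_1(s)=\tfrac12|S^{d-1}||s|^{d-1}\lambda(|s|)$; it then imports the full one-dimensional structure theorem (Theorem \ref{tdescrip}), whose proof rests on the lengthy configuration analysis of Lemmas \ref{exclus}--\ref{nonincreas}, and reads off monotonicity and the boundary values $g(0_+)=-\infty$, $g(+\infty)=0$ from the fact that $\rho_1$ has median $0$. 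You instead prove monotonicity directly by a two-point $c$-cyclical monotonicity argument: aligning two sample points on a ray and noting that $a_1<a_2$ would make $\{r_1+a_1,r_2+a_2\}$ strictly majorize $\{r_1+a_2,r_2+a_1\}$ at equal sum, contradicting strict convexity of $\ell$ (this is in effect case (iii) of Lemma \ref{exclus} specialized to the radial fibre, so you need only a single exclusion rather than the whole catalogue). You then obtain the representation $g(r)=-\tilde F^{-1}(1-\tilde F(r))$ and the boundary values directly from the pushforward identity, which also yields Corollary \ref{cor1} as a byproduct. Your route is shorter and more self-contained for this theorem, at the cost of having to be slightly careful that the $c$-cyclically monotone set meets each ray in a set of full radial measure (which follows from rotation invariance of $supp~\gamma$, as you indicate); the paper's route has the advantage of recycling the one-dimensional theory, which in its interval formulation also covers densities with bounded or annular radial support. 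Your observation that $g(0_+)=-\infty$ and $g(+\infty)=0$ implicitly require the radial support to be $[0,\infty)$ is accurate and applies equally to the paper's own argument.
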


\begin{proof}
{\bf Step 1. $\boldsymbol{T(Rx) = RT(x)}$ for all $\boldsymbol{R\in O(d)}$ and all $\boldsymbol{x\in\,supp~\mu}$:}
(Here $O(d)$ denotes the group of orthogonal $d\times d$ matrices):

 Let $\gamma_T$ be a minimizer of $C$ on
$\Gamma(\rho,\rho)$. Then $(R\times R)_\sharp\gamma$ is also a minimizer, for any $R\in O(n)$, since it belongs to $\Gamma(\rho,\rho)$ by the radial symmetry of $\rho$, and has the same cost $C$ as $\gamma_T$ by the
invariance of the cost function $c(x,y)$ under $(x,y)\mapsto (R^{-1}x,R^{-1}y)$. Hence by uniqueness,
$\gamma_T=(R\times R)_\sharp\gamma_T$. But an elementary calculation shows that the latter is equivalent to
$T(z)=R T(R^{-1}z)$ for all $z\in supp\,\mu$.
Left, respectively right hand side, evaluated on a set $A\times B$ give
$\int \chi_A(x)\chi_B(T(x))\rho(x)dx$ respectively $\int \chi_A(Rx)\chi_B(RT(x))\rho(x)dx$. A change of variables
together with the radial symmetry of $\rho$ shows that the latter expression equals
$\int \chi_A(z)\chi_B(RT(R^{-1}z))\rho(z)dz$. Comparing with the former expression yields the assertion.

{\bf Step 2. $\boldsymbol{T}$ is radial and direction reversing, i.e. $\boldsymbol{T(x)=g(|x|)\frac{x}{|x|}}$ for some $\boldsymbol{g\le 0}$:}

Let $e_1$ be a fixed unit vector in $\R^d$ and $r>0$. By Step 1, for all $R\in O(n)$ we have
\begin{equation}\label{Trep}
   T(R r e_1)=R T(re_1)=R f(r) v(r)\mbox{ with }f(r):=|T(re_1)|\mbox{ and }v(r):=T(re_1)/|T(re_1)|.
\end{equation}
Hence
$$
   I[T] = \int \ell(|x-T(x)|)\rho(x)\, dx = \int \ell \bigl(|x|e_1-f(|x|)v(|x|)\bigr)\, \rho(x)\, dx.
$$
But $\ell$ is by assumption strictly decreasing and $|re_1-f(r)v(r)|$ is maximized among unit vectors $v(r)$
if and only if $v(r)=-e_1$. Hence, since $T$ minimizes $I$, $v(r)=-e_1$. Substituting into
\eqref{Trep} yields the assertion, with $g(r)=-f(r)=-|T(re_1)|$.

{\bf Step 3. $\boldsymbol{g}$ solves a one-dimensional mass transportation problem:}

For any Borel map $T$ on $\R^d$, abbreviate $I_\mu[T]:=\int \ell(|x-T(x)|)\rho(x)dx$
(Monge functional with map $T$ and equal marginals $\rho(x)=\lambda(|x|)$). If $T$ is a radial map, i.e. of form
$T(x)=g(|x|)\frac{x}{|x|}$ for some Borel $g\, : [0,\infty)\to\R$, and $\tilde{g}$ denotes the antisymmetric
extension of $g$ to $\R$, such that, in particular,
$T(x,0,0)=(\tilde{g}(x),0,0)$ for all $x\in\R$, then using polar coordinates (with $|S^{d-1}|$ denoting the Hausdorff measure
of the unit sphere in $\R^d$)
\begin{eqnarray*}
 I_\rho[T]&=&\int_{r=0}^\infty \ell(|r-g(r)|) |S^{d-1}|r^{d-1}\lambda(r) dr \\
          &=&\int_{s=-\infty}^\infty \ell(|s-{\tilde{g}}(s)|)
          \underbrace{\mbox{$\frac12$}|S^{d-1}||s|^{d-1}\lambda(|s|)}_{=:\rho_1(s)} ds = I_{\rho_1}[T].
\end{eqnarray*}
Hence the $d$-dimensional Monge problem of minimizing $I_\rho$ over radial maps
is equivalent to the one-dimensional Monge problem of minimizing $I_{\rho_1}$ over antisymmetric maps,
and -- because of Step 1 (with d=1 and $R=-I$) -- to the one-dimensional Monge problem of minimizing $I_{\rho_1}$
over arbitrary maps. It follows that the function $g$ in \eqref{trepres}, antisymmetrically extended to $\R$,
is a minimizer of $I_{\rho_1}$. The asserted properties of $g$ now follow immediately from
Theorem \ref{tdescrip} and the fact that $\rho_1$ (being symmetric) has median $0$.
\end{proof}

\begin{corollary}
\label{cor1}
Suppose that $\mu=\nu$ are as in Theorem \ref{bigdim}. Let $t\in(0,\infty)$ and denote by
$$F_1(t)=|S^{d-1}|\int_0^t\lambda(s)s^{d-1}ds~\mbox{and}~F_2(-t)=|S^{d-1}|\int_{t}^{\infty}\lambda(s)s^{d-1}ds.$$
Then the function $g$ in \eqref{trepres} is given by
$$g(t)=F_2^{-1}(F_1(t)).$$
\end{corollary}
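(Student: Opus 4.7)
The plan is to reduce to the one-dimensional problem and exploit the explicit monotone-rearrangement formula from Theorems \ref{tdescrip} and \ref{onedimrep}. From Step~3 in the proof of Theorem \ref{bigdim}, the antisymmetric extension $\tilde g : \R \to \R$ of $g$ is the optimal one-dimensional transport map for the symmetric marginal
\[
\rho_1(s) \;=\; \frac{1}{2}\,|S^{d-1}|\,|s|^{d-1}\,\lambda(|s|),\qquad s\in\R.
\]
Since $\rho_1$ is symmetric about the origin and strictly positive on its support, its median is $a = 0$. Theorem \ref{tdescrip} (applied with $\alpha = -\infty$, $a = 0$, $\beta = +\infty$) therefore guarantees that $\tilde g$ is monotone increasing on $(0,\infty)$ with image $(-\infty,0)$, satisfying $\tilde g(0_+) = -\infty$ and $\tilde g(+\infty)=0$; on $(0,\infty)$, $\tilde g$ coincides with $g$.

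For the explicit formula I would use mass preservation. Since $\tilde g$ is monotone increasing on $(0,\infty)$ with image $(-\infty, 0)$, it sends $(0,t)$ onto $(-\infty, g(t))$ for every $t>0$. The identity $\mu_{\rho_1}((0,t)) = \mu_{\rho_1}((-\infty, g(t)))$, together with the symmetry of $\rho_1$ and the substitution $s\mapsto -s$ in the tail integral, reads
\[
\frac{1}{2}\,F_1(t) \;=\; \int_{|g(t)|}^{\infty}\!\frac{1}{2}|S^{d-1}|\,\lambda(s)\,s^{d-1}\,ds \;=\; \frac{1}{2}\,F_2(g(t)),
\]
where in the last step I used that $|g(t)| = -g(t)$ and the definition $F_2(-u) = |S^{d-1}|\int_u^\infty \lambda(s)\,s^{d-1}\,ds$ for $u \ge 0$. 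As $F_2$ is strictly increasing on $(-\infty, 0]$ (from $0$ up to the total mass $\int_{\R^d}\lambda(|x|)\,dx$), it is invertible there, and inverting yields $g(t) = F_2^{-1}(F_1(t))$.

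No serious obstacle arises: the argument is essentially bookkeeping once the right monotonicity and interval image are identified from Theorem \ref{tdescrip} and $F_1$, $F_2$ are recognized as the radial cumulative distributions expressed in spherical coordinates. Equivalently, one could invoke the formula $T^{(1)}(x) = \bar{\mu}_1^{-1}(\mu_2(x))$ from Theorem \ref{onedimrep} directly, with $\mu_2(x) = \tfrac{1}{2} F_2(-x)$ for $x>0$ and $\bar{\mu}_1(y) = \tfrac{1}{2} F_1(-y)$ for $y<0$, producing the same answer after the same sign/orientation check. The only thing to watch is that $\lambda > 0$ on $\mathrm{supp}\,\mu$ is needed so that both $F_1$ and $F_2$ are strictly monotone and hence genuinely invertible.
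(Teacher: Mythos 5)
Your proof is correct and follows essentially the same route as the paper: reduce to the one-dimensional problem via Step~3 of Theorem \ref{bigdim} and then read off $g$ from the monotone structure of Theorem \ref{tdescrip} / the representation in Theorem \ref{onedimrep}. You merely make explicit the mass-balance bookkeeping (identifying $F_1$, $F_2$ with the radial cumulative distributions of $\rho_1$) that the paper dispatches as a ``direct consequence.''
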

\begin{proof}
We have already shown that $g$, antisymmetrically extended to $\R$,
minimizes the one-dimensional functional $I_{\rho_1}$, with $\rho_1$ as in Step 3 above.
The assertion is now a direct consequence of the representation formula given in Theorem \ref{onedimrep}.
\end{proof}

\bexam (exponential radially symmetric distribution)
\label{Example1}
Assume now that $\rho(x)=\frac{1}{Z}e^{-|x|}$ for $x=(x_1,_2,x_3)\in\R^3$, where $Z$ is the normalizing constant, that is, $Z=\int e^{-|x|} dx_1dx_2 dx_3$. Then for $t\in (0,\infty)$, we have
$$ F_1(t)=1-\left(1+t+\frac{t^2}{2}\right)e^{-t},~F_2(-t)=e^{-t}\left(1+t+\frac{t^2}{2}\right)~\mbox{and}~g(t)=F^{-1}_2(F_1(t)).$$
\eexam

\begin{figure}
~\hfill
\begin{minipage}[t]{0.4\textwidth}
\resizebox{!}{4cm}{\includegraphics{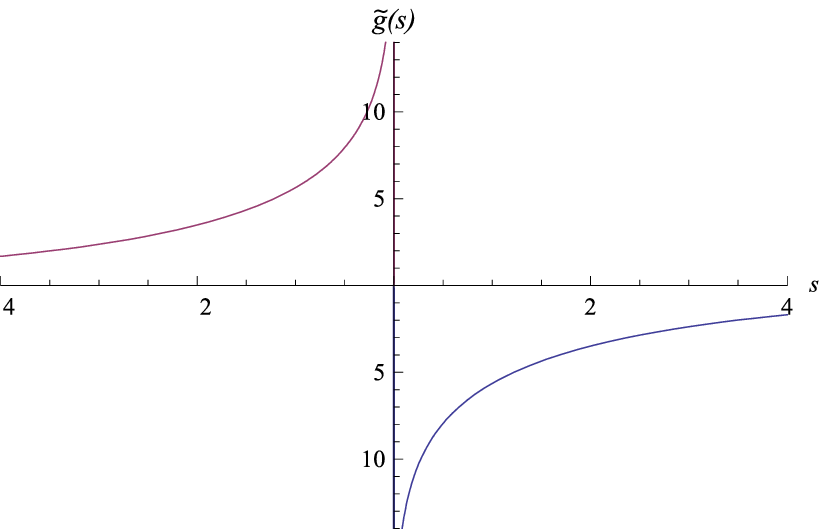}}
\caption{\label{jAs1} Optimal transport map $T$ for the density $\rho(x)= const\times e^{-|x|}$.
As shown in the text, $T$ leaves lines through the origin
invariant, e.g. $T(x,0,0)=(\tilde{g}(x),0,0)$ for all $x$,
and the figure shows the function $\tilde{g}$.}
\end{minipage}
\hfill
\begin{minipage}[t]{0.4\textwidth}
\resizebox{!}{4cm}{\includegraphics{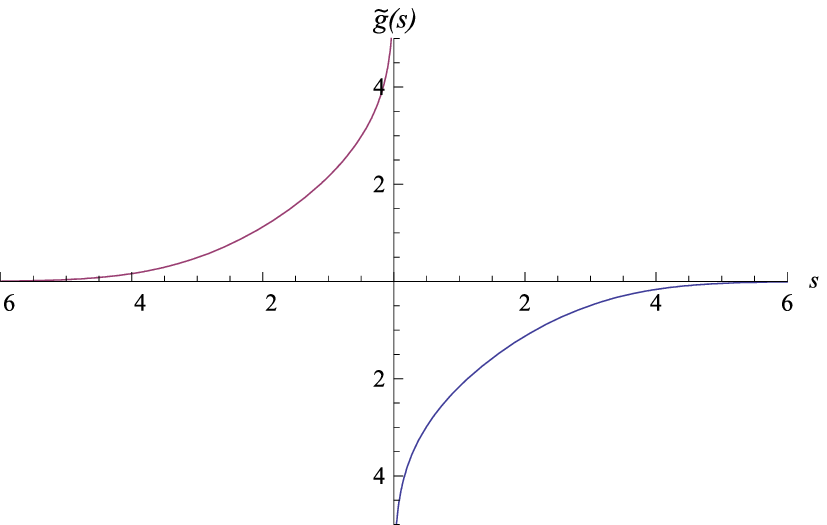}}
\caption{\label{jAs2}  Optimal transport map $T$ for the density $\rho(x)= const\times e^{-|x|^2/2}$.}
\end{minipage}
\hfill~
\end{figure}

\bexam (normal radially symmetric distribution)
\label{Example2}
Assume now that $\rho(x)=\frac{1}{Z}e^{-|x|^2/2}$ for $x\in\R^3$, where $Z$ is the normalizing constant. Then for $t\in (0,\infty)$, we have
$$ F_1(t)=\frac{1}{\sqrt{2\pi}}\int_0^te^{-s^2/2}s^{2}ds,~F_2(-t)=\frac{1}{\sqrt{2\pi}}\int_{t}^{\infty}e^{-s^2/2}s^{2}ds~\mbox{and}~g(t)=F^{-1}_2(F_1(t)).$$
\eexam

\section{Asymptotic exactness of the optimal transport functional in the semiclassical limit}

Our goal in this section is to compare the \textit{exact} quantum mechanical ground state energy to the
\textit{approximate} DFT ground state energy obtained by replacing $V_{ee}$ by the optimal
transportation functional. Recall from (\ref{EQM}) and (\ref{VP})) that the exact ground state energy
of an $N$-electron system is defined as
\begin{equation}
\label{exact}
   E_0^{QM}=\inf_{\Psi\in {\cal A}}\Big\{\,T[\Psi]+V_{ne}[\rho^\Psi]+V_{ee}[\rho_2^\Psi]\,\Big\}
\end{equation}
and the approximate ground state energy is (recall the DFT formalism in \eqref{DFTen}, \eqref{DFTen2})
\begin{equation}
\label{approx}
   E_0^{DFT-OT}=\inf_{\Psi\in {\cal A}} \Big\{\,T[\Psi]+V_{ne}[\rho^\Psi]+E_{OT}[\rho^\Psi]\,\Big\} = \inf_{\rho\in\calR} \Big\{ T_{QM}[\rho] + V_{ne}[\rho] + 
   E_{OT}[\rho]\Big\}.
\end{equation}
In the above, $\rho_2^\Psi$ and $\rho^\Psi$ denote the pair density, respectively the single particle density of $\Psi$ (see (\ref{rhoN}),
(\ref{rho2}) and (\ref{rho})), and $E_{OT}$ is the optimal transportation functional with Coulomb cost from \eqref{EMT}
\begin{equation}
   \label{EOT}
   E_{OT}[\rho]=\inf_{\gamma\in \Gamma(\rho,\rho)}\int_{\R^6}\frac{1}{|x-y|}d\gamma(x,y).
\end{equation}
Due to the fact that $\rho^\Psi$ is the marginal of $\rho_2^\Psi$, we have
\begin{equation}\label{Veelowbd}
   V_{ee}[\rho_2^\Psi]\ge E_{OT}[\rho^\Psi] \mbox{ for every }\Psi\in {\cal A}.
\end{equation}
Taking the infimum over $\Psi$ gives
\begin{theorem} \label{lowbound} For every $N$, and any potential $v\in L^{3/2}(\R^3)+L^\infty(\R^3)$, the density functional with electron-electron interaction energy
given by the mass transportation functional is a rigorous lower bound:
   $$E_0^{QM}\ge E_0^{DFT-OT}.$$
\end{theorem}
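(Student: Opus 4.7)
The proof is essentially a one-line consequence of the pointwise (in $\Psi$) inequality \eqref{Veelowbd} stated just before the theorem, so my plan is to give that inequality a clean justification and then pass to the infimum.

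First I would fix an arbitrary $\Psi\in\cala$ and argue that its pair density $\rho_2^\Psi$, as defined in \eqref{rho2}, is an admissible competitor in the variational problem \eqref{EOT} defining $E_{OT}[\rho^\Psi]$. Concretely, $\rho_2^\Psi$ is a nonnegative Radon measure on $\R^6$ (in fact an $L^1$ function) and, by the marginal identity \eqref{marginal}, it has (up to the combinatorial prefactor ${N\choose 2}/N$) both left and right marginals equal to $\rho^\Psi$. Thus $\rho_2^\Psi\in\calM_+$ with $\rho_2^\Psi\mapsto\rho^\Psi$ in the sense of \eqref{EMT}.

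Next, invoking Lemma~\ref{DFT1} to rewrite $V_{ee}[\Psi]$ as $\int_{\R^6}|x-y|^{-1}\rho_2^\Psi(x,y)\,dx\,dy=C[\rho_2^\Psi]$, the admissibility just established yields
\begin{equation*}
V_{ee}[\rho_2^\Psi]=C[\rho_2^\Psi]\ge \inf_{\rho_2\in\calM_+,\,\rho_2\mapsto\rho^\Psi}C[\rho_2]=E_{OT}[\rho^\Psi],
\end{equation*}
which is precisely \eqref{Veelowbd}. Adding $T[\Psi]+V_{ne}[\rho^\Psi]$ to both sides (an operation that preserves the inequality since both quantities depend only on $\Psi$ and $\rho^\Psi$) gives
\begin{equation*}
\calE^{QM}[\Psi]=T[\Psi]+V_{ne}[\rho^\Psi]+V_{ee}[\rho_2^\Psi]\ \ge\ T[\Psi]+V_{ne}[\rho^\Psi]+E_{OT}[\rho^\Psi].
\end{equation*}

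Finally, I would take the infimum over $\Psi\in\cala$ on both sides: the left-hand side becomes $E_0^{QM}$ by \eqref{VP}, and the right-hand side is exactly $E_0^{DFT-OT}$ as given in \eqref{approx}. This yields $E_0^{QM}\ge E_0^{DFT-OT}$. There is no real obstacle here; the only subtle points to make explicit are that $v\in L^{3/2}(\R^3)+L^\infty(\R^3)$ is enough to ensure $V_{ne}[\rho^\Psi]=\int v\rho^\Psi$ is well-defined for $\Psi\in\cala$ (since $\rho^\Psi\in L^1\cap L^3$ by Lieb's characterization \eqref{admdens2}), and that no integrability issue arises in the bound because we are only comparing $V_{ee}$ to an infimum of the same quantity $\int|x-y|^{-1}d\gamma$ over a larger class of measures, so the inequality is unconditional (both sides may be $+\infty$).
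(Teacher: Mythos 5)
Your proposal is correct and is essentially the paper's own argument: the pair density $\rho_2^\Psi$ is an admissible competitor in the variational problem defining $E_{OT}[\rho^\Psi]$ (with the marginal condition understood up to the combinatorial normalization in \eqref{marginal}), giving \eqref{Veelowbd}, and taking the infimum over $\Psi$ yields the claim. The extra remarks on normalization and on $V_{ne}$ being well defined are sensible elaborations of what the paper leaves implicit.
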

Now consider the kinetic energy functional $T[\Psi]$ from Section 2.1 with physical constants inserted,
\beq \label{kinen}
  T_{\hbar}[\Psi] = \frac{{\hbar}^2}{2m} \int ... \int \sum_{i=1}^N |\nabla_{x_i}\Psi(x_1,s_1,..,x_N,s_N)|^2
  dz_1..dz_N.
\eeq
Here $m$ is the mass of the electron and $\hbar$ is Planck's constant $h$ divided by $2\pi$.
We are interested in the
limit $\hbar\rightarrow 0$ (\textit{semiclassical limit}). Define now $E_0^{QM}(\bar{h})$ and $E_0^{DFT-OT}(\bar{h})$ as in (\ref{exact}) and (\ref{approx}),
but with $T[\Psi]$ replaced by $T_{\hbar}[\Psi]$. Note now that the statement
\be \label{quotientlimit}
  \frac{E_0^{QM}(\hbar)}{E_0^{DFT-OT}(\hbar)}\rightarrow 1~\mbox{as}~\hbar\rightarrow 0
\ee
is in general false.
The reason is that when $\hbar$ gets small, then (for typical $V_{ne}$)
the ground state densities of both models contract, and
the approximation is not uniformly good on families of contracting densities.

This has nothing particular to do with the use of $E_{OT}$, but (\ref{quotientlimit}) fails for {\it any} DFT model (\ref{DFTen}) whose electron interaction
functional $\tilde{V}_{ee}$ has the correct scaling under dilations,
\beq \label{dilbeh}
   \tilde{V}_{ee}[\alpha^3\rho(\alpha\cdot)]
   =\alpha \tilde{V}_{ee}[\rho(\cdot)],
\eeq
such as the mean field functional (\ref{indansatz}) or the local density
approximation (\ref{LDA}). A counterexample is already given
by atoms, $v(x)=-Z/|x|$ (eq. (\ref{potential}) in Section 2, with $\alpha=1$). Very
remarkably, in this case
\begin{equation}
\label{star}
   E_0^{QM}(\hbar) = \frac{E_0^{QM}(1)}{\hbar^{ 2}} ~\mbox{and}~E_0^{DFT-OT}(\hbar) = \frac{ E_0^{DFT-OT}(1)}{\hbar^{ 2}},
\end{equation}
and hence the quotient $E_0^{QM}(\hbar)/E_0^{DFT-OT}(\hbar)$ is independent of $\hbar$!
To prove this, use that the four functionals involved, $T, V_{ne}, V_{ee}$, and
$\tilde{V}_{ee}$, all have a definite scaling behaviour with respect to dilations.
For a given $\Psi\in {\mathcal A}$, consider its $L^2$-norm-preserving dilation
$$\Psi_{\hbar}(x_1,..,x_N) := (\hbar^2)^{-3N/2} \Psi(\hbar^{-2} x_1,..,\hbar^{-2} x_N).$$
Then (with $T_{\hbar}$ being the kinetic energy with prefactor $\hbar^2/2m$ from (\ref{kinen}))
$$(T_{\hbar} + V_{ne} + V_{ee})[\Psi_{\hbar}] = \hbar^{-2} (T_1 + V_{ne} + V_{ee})[\Psi].$$
Taking the infimum over $\Psi$ gives the first assertion in (\ref{star}). The second assertion follows analogously after noting that
$$
               \rho^{\Psi_{\hbar}}(x) = (\hbar^2)^{-3} \rho^{\Psi}(\hbar^{-2}x), \;\; E_{OT}[\rho^{\Psi_\hbar}] = \hbar^{-2} E_{OT}[\rho^\Psi]
$$
(or more generally $\tilde{V}_{ee}[\rho^{\Psi_\hbar}]= \hbar^{-2} \tilde{V}_{ee}[\rho^{\Psi_\hbar}]$ for every $\tilde{V}_{ee}$ satisfying
(\ref{dilbeh})).

What we can prove is the following ``pointwise'' statement in which
we only minimize out $\Psi$ at fixed $\rho$:
\begin{theorem}
\label{limit}
Let $N=2$. Then
$$\lim_{\hbar\to 0}  F_{HK}[\rho] = E_{OT}[\rho]~\mbox{for every}~\rho\in {\cal R}.$$
Here ${\cal R}$ is the natural class of densities given by the image of $\cala$ under
the map $\Psi \mapsto \rho$ (${\cal R}$ is defined in (\ref{admdens2})), and
$F_{HK}$ is the Hohenberg-Kohn functional (\ref{FHK}) with kinetic energy functional $T_\hbar$ in place
of $T$.
\end{theorem}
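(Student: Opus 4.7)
I would prove the two one-sided inequalities $\liminf_{\hbar\to 0} F_{HK}[\rho] \ge E_{OT}[\rho]$ and $\limsup_{\hbar\to 0} F_{HK}[\rho] \le E_{OT}[\rho]$ separately.

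The lower bound is immediate and actually $\hbar$-uniform. For any $\Psi\in\cala$ with $\Psi\mapsto\rho$ one has $T_\hbar[\Psi]\ge 0$, and $\rho_2^\Psi$ is a nonnegative symmetric measure on $\R^6$ whose marginals are $\rho/2$ after rescaling by $1/{N\choose 2}=1$, so $\rho_2^\Psi$ is admissible in (\ref{EMT}) and $V_{ee}[\rho_2^\Psi]\ge E_{OT}[\rho]$. Taking the infimum over $\Psi\mapsto\rho$ yields $F_{HK}[\rho]\ge E_{OT}[\rho]$ for every $\hbar>0$.

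For the upper bound I would produce, for each $\hbar$, a trial $\Psi_\hbar\in\cala$ with $\Psi_\hbar\mapsto\rho$ such that $V_{ee}[\rho_2^{\Psi_\hbar}]\to E_{OT}[\rho]$ and $\hbar^2 T[\Psi_\hbar]\to 0$. Let $T=T_\rho$ be the unique optimal map of Theorem \ref{uniqgeom} pushing $\rho/2$ to itself. By Theorem \ref{prop}(a) the optimal plan $(\mathrm{id},T)_\#(\rho/2)$ is symmetric, which forces $T\circ T=\mathrm{id}$ $\rho$-a.e. Fix a smooth, nonnegative, even mollifier $\eta_\sigma$ of width $\sigma$ and form the symmetric smoothed pair density
\beao
 \rho_2^\sigma(x,y):=\tfrac14\bigl[\rho(x)\,\eta_\sigma(y-T(x))+\rho(y)\,\eta_\sigma(x-T(y))\bigr].
\eeao
A change of variable based on the Monge--Amp\`ere identity $\rho(T(z))|\det DT(z)|=\rho(z)$ (Lemma \ref{mongeamperre}) shows that the single-particle marginal of $\rho_2^\sigma$ equals $\rho_\sigma:=\tfrac12(\rho+\rho*\eta_\sigma)$, which converges to $\rho$ in $L^1$. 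To reinstate the marginal exactly, let $S_\sigma$ be the Brenier map sending $\rho/2$ to $\rho_\sigma/2$, and define
\beao
 \psi_\sigma(x,y):=\sqrt{\rho_2^\sigma\bigl(S_\sigma(x),S_\sigma(y)\bigr)\,|\det DS_\sigma(x)|\,|\det DS_\sigma(y)|}.
\eeao
Then $\psi_\sigma$ is symmetric in $(x,y)$, $\|\psi_\sigma\|_2=1$, and another Monge--Amp\`ere computation shows its single-particle marginal is exactly $\rho$. Tensoring with the two-electron spin singlet yields an antisymmetric $\Psi_\sigma\in\cala$ with $\Psi_\sigma\mapsto\rho$.

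It remains to verify the two asymptotic bounds. For the interaction energy, changing variables reduces $V_{ee}[\rho_2^{\Psi_\sigma}]$ to $\int|S_\sigma^{-1}(x)-S_\sigma^{-1}(y)|^{-1}\rho_2^\sigma(x,y)\,dx\,dy$; since $S_\sigma\to\mathrm{id}$ and $\rho_2^\sigma$ concentrates on the graph of $T$ where $x\neq T(x)$ $\rho$-a.e.\ by Remark \ref{zeromeasure}, dominated convergence gives the limit $\int \rho(x)/(2|x-T(x)|)\,dx=E_{OT}[\rho]$. For the kinetic energy, the dominant contribution comes from differentiating $\eta_\sigma$, which produces an $O(\sigma^{-2})$ bound on $T[\Psi_\sigma]$; choosing $\sigma=\sigma(\hbar)\to 0$ with $\hbar^2/\sigma(\hbar)^2\to 0$, for instance $\sigma(\hbar)=\sqrt\hbar$, yields $\hbar^2 T[\Psi_{\sigma(\hbar)}]\to 0$, and the proof is complete.

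\textbf{Main obstacle.} The delicate point is the simultaneous control of (i) the kinetic energy after composition with the Brenier correction $S_\sigma$, which requires that $DS_\sigma$ and $\nabla\det DS_\sigma$ contribute only subleading terms so that $T[\Psi_\sigma]$ stays $O(\sigma^{-2})$, and (ii) the regularity of the optimal map $T$, which is not automatically smooth for the Coulomb cost. I would handle this by first approximating $\rho$ by smooth strictly positive densities $\rho_\delta$ (using the $\sqrt\rho\in H^1$ regularity from (\ref{admdens2})), for which $T$ and $S_\sigma$ are well-behaved, running the construction for $\rho_\delta$, and then recovering the result for $\rho$ by a diagonal argument in $(\hbar,\sigma,\delta)$ using the continuity and strict convexity of $E_{OT}$ in $\rho$ established in Theorem \ref{prop}(b).
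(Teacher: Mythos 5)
Your lower bound is correct and coincides with the paper's (it is exactly \eqref{Veelowbd} plus positivity of $T_\hbar$), and your overall two-sided strategy matches the paper's. The upper bound, however, has genuine gaps that your own construction cannot close. The central one is that your trial pair density $\rho_2^\sigma(x,y)=\tfrac14[\rho(x)\eta_\sigma(y-T(x))+\cdots]$ forces the kinetic energy to involve $\nabla_x\bigl[\eta_\sigma(y-T(x))\bigr]=-DT(x)^{\top}\nabla\eta_\sigma(y-T(x))$, i.e.\ the derivative of the Coulomb-cost optimal map itself. Theorem \ref{uniqgeom} only gives $T$ as an a.e.-defined Borel map, and no regularity theory for it is available; worse, $T$ is genuinely singular even for smooth strictly positive densities --- in one dimension it has a jump at the median (Theorem \ref{tdescrip}), and in the radial case $g(0_+)=-\infty$ (Theorem \ref{bigdim}). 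So $\sqrt{\rho(x)\eta_\sigma(y-T(x))}$ has a jump across a codimension-one set in $\R^6$ and fails to lie in $H^1(\R^6)$ for \emph{any} $\sigma$; your claimed $O(\sigma^{-2})$ bound on $T[\Psi_\sigma]$ is therefore not just unproven but false. Your proposed fix --- approximate $\rho$ by smooth $\rho_\delta$ --- does not help, because smoothness of the density does not yield smoothness of $T$. This is precisely why the paper mollifies the \emph{plan} $\gamma$ (convolving in both variables), which produces a $C^\infty$, strictly positive $\gamma_\epsilon$ with no reference to $T$ or its derivatives.

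Two further steps are also unjustified. First, the Brenier correction: composing with $S_\sigma$ puts $DS_\sigma$ and $\nabla\det DS_\sigma$ into the kinetic energy, and you give no control of these; the paper instead re-instates the marginals by a plan composition (Section \ref{S:Constraint}) whose $H^1$ bound (Theorem \ref{T:grad}) needs only $\sqrt{\rho_A},\sqrt{\rho_B}\in H^1$ \emph{plus strong positivity of the plan} --- note that without mixing in $\beta\,\rho_\epsilon\otimes\rho_\epsilon$ the estimate fails (the paper's explicit example shows $\sqrt{(\rho_B-\rho_A)_+}\notin H^1$ generically), and your construction has no analogue of this step. Second, your appeal to dominated convergence for $V_{ee}$ is not available: the Coulomb cost is unbounded near the diagonal and the smoothed plan spreads mass onto it, so there is no dominating function. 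What saves the day in the paper is Newton's theorem (Proposition \ref{P:smooth}(c)), giving the one-sided bound $C[\gamma_\epsilon]\le C[\gamma]$ for a radial mollifier; a version of this would also rescue your one-variable smoothing, but the subsequent composition with $S_\sigma^{-1}$ destroys the structure needed for it, whereas the paper controls the marginal correction through the Lipschitz-type continuity of $E_{OT}$ in $L^1\cap L^3$ (Theorem \ref{T:cont}).
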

This case for $N=2$ already
contains the main analytic issue, namely that the optimal transport
measure $\gamma$ is singular and so its square root fails to be in $L^2$ and
fails to
have an $L^2$ gradient. But the case allows to avoid the quantum mechanical
issues of spin and antisymmetry, which would enter on top of this
when $N\ge 3$.

An interesting challenge raised by the above theorem is to derive higher order corrections to $E_{OT}$
in the semiclassical limit.

\subsection{Re-instating the constraint} \label{S:Constraint}

 In order to show that $\lim_{\hbar\to 0}  F_{HK}[\rho] = E_{OT}[\rho]$, we will need to make modifications to the optimal
plan $\gamma$ which yields $E_{OT}[\rho]$, since any $\Psi$ which represents $\gamma$ has $T[\Psi]=+\infty$. Therefore
we cannot use these $\Psi'$s as
trial states in the variational principle for $F_{HK}[\rho]$. Hence, we will need to modify the optimal
$\gamma$. {\it But the modifications that one would
like to use, e.g. smoothing, lead to modified marginals.}

Hence we need to be able to control the change in $E_{OT}[\rho]$ induced by a small change in $\rho$. This is not trivial,
due to the rigid infinite-dimensional constraint in the variational principle for $E_{OT}$ that the trial states must have marginals
exactly equal to $\rho$, and is achieved in Theorem \ref{T:cont} below.

The main technical idea behind this theorem is the following construction to ``re-instate the constraint'', i.e. to deform
a given trial plan into a nearby one with prescribed marginals.
Suppose we are given an arbitrary transport plan
$\gamma_{A\rightarrow A}$ with equal marginals $\rho_A$, and an arbitrary second density $\rho_B$. We assume that
$\rho_A,\rho_B\in L^1\cap L^3(\R^3)$, $\rho_A,\rho_B\ge 0$, and $\int_{\R^3}\rho_A(x)\,dx=\int_{\R^3}\rho_B(x)\,dx=1$.
Our interest is in the case when $\rho_B$ is near $\rho_A$, but the construction works for general $\rho_B$.

Intuitively, the plan $\gamma_{B\rightarrow B}$ with equal marginals $\rho_B$ we have in mind is the following.
\begin{itemize}
\item  First transport $\rho_B$ to $\rho_A$ by a transport plan $\gamma_{B\rightarrow A}$ that does not move much mass around when $\rho_B$ is close to $\rho_A$.
\item  Then apply the plan $\gamma_{A\rightarrow A}$.
\item  Finally transport $\rho_A$ back to $\rho_B$.
\end{itemize}
First, let us construct a suitable plan $\gamma_{B\rightarrow A}$.
Let $f(x):=\min\{\rho_A(x),\rho_B(x)\}$. Take $f_A:=(\rho_A-f)_+$ and $f_B:=(\rho_B-f)_+$. Then $\rho_A=f+f_A$ and $\rho_B=f+f_B$.

On $f$ we ``do nothing'', i.e. we let:
$$\gamma_{f\rightarrow f}(x,y)=f(x)\delta_x(y).$$
On $f_A$ we transport to $f_B$ via a convenient plan which allows simple estimates (note that $\int_{\R^3} f_A(x)\,dx=\int_{\R^3} f_B(x)\,dx$, due
to the fact that $\int_{\R^3}\rho_A(x)\,dx=\int_{\R^3}\rho_B(x)\,dx$):
$${\gamma}_{f_A\rightarrow f_B}(x,y)=\frac{f_A(x)f_B(y)}{\int_{\R^3} f_B(x)\,dx}.$$
We then set
\begin{equation}
\label{gamma12}
\gamma_{A\rightarrow B}(x,y)=\gamma_{f\rightarrow f}(x,y)+{\gamma}_{f_A\rightarrow f_B}(x,y)=f(x)\delta_x(y)+\frac{f_A(x)f_B(y)}{\int_{\R^3} f_B(y)\, dy}.
\end{equation}
Note that $\int_{\R^3}\gamma_{A\rightarrow B}(x,y)\,dy=f(x)+f_A(x)=\rho_A(x)$ and
$\int\gamma_{A\rightarrow B}(x,y)\,dx=f(y)+f_B(y)\,\frac{\int_{\R^3} f_A(x)\,dx}{\int_{\R^3} f_B(x)\,dx}=f(y)+f_B(y)=\rho_B(y)$, as required.
We will also need the reverse plan
\begin{equation}
\label{gamma21}
\gamma_{B\rightarrow A}(x,y)=f(x)\delta_x(y)+\frac{f_B(x)f_A(y)}{\int_{\R^3} f_A(y)\, dy},
\end{equation}
which satisfies $\int_{\R^3}\gamma_{B\rightarrow A}(x,y)\,dy=\rho_B(x)$ and $\int_{\R^3}\gamma_{B\rightarrow A}(x,y)\,dx=\rho_A(y)$.
Finally we introduce the combined plan
\begin{equation}
\label{combplan}
P(x,w):=\int_{\R^3}\int_{\R^3}\gamma_{B\rightarrow A}(x,y)\,\frac{\chi_{\rho_A>0}(y)}{\rho_A(y)}\,\gamma_{A\rightarrow A}^{opt}(y,z)\frac{\chi_{\rho_A>0}(z)}{\rho_A(z)}\,\gamma_{A\rightarrow B}(z,w)\,dy\,dz.
\end{equation}

We now claim that
\beq \label{gamma22}
  \int_{\R^3}P(x,w)\,dw=\rho_B(x) \mbox{ and }\int_{\R^3}P(x,w)\,dx=\rho_B(w).
\eeq
To prove the first claim, we begin by integrating over $w$. This yields
$$
  \int_{\R^3}P(x,w)\,dw=\int_{\R^3}\int_{\R^3}\gamma_{B\rightarrow A}(x,y)\,\frac{\chi_{\rho_A>0}(y)}{\rho_A(y)}\,
  \gamma_{A\rightarrow A}^{opt}(y,z)\frac{\chi_{\rho_A>0}(z)}{\rho_A(z)}\,\rho_A(z)\,dy\,dz.
$$
Noting that $\frac{\chi_{\rho_A>0}(z)}{\rho_A(z)}\,\rho_A(z)=1$ whenever $\gamma_{A\rightarrow A}^{opt}(y,z)>0$ and recalling that
$\int_{\R^3}\gamma_{A\rightarrow A}^{opt}(y,z)\,dz=\rho_A(y)$, integrating over $z$ yields
$$
    \int_{\R^3}P(x,w)\,dw=\int_{\R^3}\gamma_{B\rightarrow A}(x,y)\,\frac{\chi_{\rho_A>0}(y)}{\rho_A(y)}\,\rho_A(y)\,dy.
$$
Since $\frac{\chi_{\rho_A>0}(y)}{\rho_A(y)}\,\rho_A(y)=1$ whenever $\gamma_{B\rightarrow A}(x,y)>0$ and
since $\int_{\R^3}\gamma_{B\rightarrow A}(x,y)\,dy=\rho_B(x)$, the right hand side becomes equal to $\rho_B(x)$ after integrating over $y$.
The second marginal condition can be derived analogously.

\subsection{Continuity of the optimal transport functional} \label{S:Cont}

By combining the techique introduced above with appropriate estimates,
we are able to control the change in $E_{OT}[\rho]$ induced by a small change in $\rho$.

\begin{theorem} \label{T:cont}
There exists a $c_*>0$ such that for any $\rho_A,\rho_B\in L^1\cap L^3(\R^3)$, with $\rho_A,\rho_B\ge 0$ and
$\int_{\R^3}\rho_A(x)\,dx=\int_{\R^3}\rho_A(x)\,dx=1$, the optimal transport functional with Coulomb cost (\ref{EOT}) satisfies
$$
      \left|E_{OT}[\rho_A]-E_{OT}[\rho_B]\right|\le c_*\left(||\rho_A||_{L^1\cap L^3(\R^3)}+||\rho_B||_{L^1\cap L^3(\R^3)}\right)
      ||\rho_A-\rho_B||_{L^1(\R^3)\cap L^3(\R^3)},
$$
where $||\rho_i||_{L^1\cap L^3(\R^3)}:=\max\{||\rho_i||_{L^1(\R^3)},||\rho_i||_{L^3(\R^3)}\}$ for $i\in\{A,B\}$.
\end{theorem}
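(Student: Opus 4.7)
By symmetry it suffices to bound $E_{OT}[\rho_B]-E_{OT}[\rho_A]$ from above. Let $\gamma_A^*\in\Gamma(\rho_A,\rho_A)$ denote the unique optimizer supplied by Theorem~\ref{uniqgeom}, which applies because $\rho_A\,dx$ is absolutely continuous. The construction of Section~\ref{S:Constraint} uses $\gamma_A^*$ together with $\gamma_{B\to A}$ and $\gamma_{A\to B}$ to produce a competitor $P\in\Gamma(\rho_B,\rho_B)$ in \eqref{combplan}, so $E_{OT}[\rho_B]\le C[P]$. My plan is therefore to prove $C[P]\le E_{OT}[\rho_A]+c\,\|\rho_A-\rho_B\|_{L^1\cap L^3(\R^3)}$ for some absolute constant $c$; interchanging the roles of $\rho_A$ and $\rho_B$ then yields the theorem, and the factor $\|\rho_A\|_{L^1\cap L^3}+\|\rho_B\|_{L^1\cap L^3}$ appearing in the statement is in fact a harmless multiplier (bounded below by $2$, since $\|\rho_i\|_{L^1}=1$).

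Using $\gamma_{B\to A}(x,y)=f(x)\delta_x(y)+f_B(x)f_A(y)/M$ and $\gamma_{A\to B}(z,w)=f(z)\delta_z(w)+f_A(z)f_B(w)/M$ with $M:=\int f_A=\tfrac12\|\rho_A-\rho_B\|_{L^1}$, plugging into \eqref{combplan} and integrating out the $\delta$'s expands $P$ as $P_{00}+P_{01}+P_{10}+P_{11}$, indexed by whether each outer block is of diagonal or product type. A direct calculation yields
\[
P_{00}(x,w)=\frac{f(x)\,f(w)}{\rho_A(x)\,\rho_A(w)}\,\gamma_A^*(x,w),\qquad P_{11}(x,w)=\frac{f_B(x)\,f_B(w)}{M^{2}}\,K,
\]
where $K:=\iint\frac{f_A(y)\,f_A(z)}{\rho_A(y)\,\rho_A(z)}\,d\gamma_A^*(y,z)$, and $P_{01},P_{10}$ are of analogous form with a single $f$- or $f_A$-ratio integrated against $\gamma_A^*$. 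For the diagonal--diagonal piece the pointwise bound $f/\rho_A\le 1$ immediately gives $C[P_{00}]\le E_{OT}[\rho_A]$, so the remaining task is to estimate the three cross terms.

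The main technical input is the classical Coulomb-potential estimate
\[
\bigl\|\,h\ast|\cdot|^{-1}\,\bigr\|_{L^\infty(\R^3)}\le c\,\|h\|_{L^1}^{1/2}\,\|h\|_{L^3}^{1/2},
\]
obtained by splitting $1/|x|$ at radius $R$, applying H\"older with exponents $(3,3/2)$ on the near-field and the trivial $L^1$--$L^\infty$ pairing on the far-field, and optimising in $R$. Combined with the duality $\iint u(x)v(w)/|x-w|\,dx\,dw\le \|u\|_{L^1}\,\|v\ast|\cdot|^{-1}\|_{L^\infty}$, together with the estimates $\int g,\int h\le M$ and $K\le M$ (each a consequence of $f_A/\rho_A\le 1$ applied once under the marginal identity $\int\gamma_A^*(y,z)\,dz=\rho_A(y)$), each cross term satisfies
\[
C[P_{ij}]\le \frac{1}{M}\cdot M\cdot c\,M^{1/2}\,\|f_B\|_{L^3}^{1/2}=c\,M^{1/2}\,\|f_B\|_{L^3}^{1/2},\qquad (i,j)\ne(0,0).
\]

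The hard part is that the normalising $1/M$ in the off-diagonal blocks of $\gamma_{B\to A},\gamma_{A\to B}$ blows up as $\rho_B\to\rho_A$, and one must verify that this singularity is actually absorbed to give a bound vanishing linearly in $\|\rho_A-\rho_B\|$. The AM--GM inequality $M^{1/2}\|f_B\|_{L^3}^{1/2}\le\tfrac12(M+\|f_B\|_{L^3})$, together with $M=\tfrac12\|\rho_A-\rho_B\|_{L^1}$ and $\|f_B\|_{L^3}\le\|\rho_A-\rho_B\|_{L^3}$, does exactly this and yields $C[P_{ij}]\le c\,\|\rho_A-\rho_B\|_{L^1\cap L^3}$ for each cross term. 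Summing the three cross-term bounds with $C[P_{00}]\le E_{OT}[\rho_A]$ and invoking symmetry between $A$ and $B$ completes the argument.
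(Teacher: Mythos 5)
Your proposal is correct and follows essentially the same route as the paper: the same competitor plan $P$ from (\ref{combplan}), the same four-term decomposition (your $P_{00},P_{01},P_{10},P_{11}$ are the paper's $W_1,\dots,W_4$), the pointwise bounds $f/\rho_A\le 1$, $f_A/\rho_A\le 1$ together with the marginal identities, and the short-range/long-range splitting of the Coulomb kernel. The only cosmetic difference is that you keep the optimized geometric-mean form $c\,\|f_B\|_{L^1}^{1/2}\|f_B\|_{L^3}^{1/2}$ and close with AM--GM, whereas the paper uses the max-norm form $c_0\,\|f_B\|_{L^1\cap L^3}$ directly; both yield the stated linear bound in $\|\rho_A-\rho_B\|_{L^1\cap L^3}$.
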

\begin{proof}
Fix arbitrarily two marginals $\rho_A,\rho_B\in L^1\cap L^3(\R^3)$,
with $\rho_A,\rho_B\ge 0$ and $\int_{\R^3}\rho_A(x)\,dx=\int_{\R^3}\rho_A(x)\,dx=1$.
Let $\gamma_{A\rightarrow A}=\gamma_{A\rightarrow A}^{opt}$ be an optimal transport plan of $C$ subject to the constraint that
$\gamma_{A\rightarrow A}$ has equal marginals $\rho_A$. The main idea is to consider the associated plan $\gamma_{B\rightarrow B}=P$ introduced
in (\ref{combplan}) and show that
\begin{equation} \label{varbd}
     C(\gamma_{B\rightarrow B})\le C(\gamma_{A\rightarrow A}^{opt})+c_*\left(||\rho_A||_{L^1\cap L^3(\R^3)}+||\rho_B||_{L^1\cap L^3(\R^3)}\right)
     ||\rho_A-\rho_B||_{L^1(\R^3)\cap L^3(\R^3)}.
\end{equation}
By the variational principle for $E_{OT}[\rho_B]$ and the optimality of $\gamma_{A\rightarrow A}^{opt}$ this implies
$$
     E_{OT}[\rho_B]\le E_{OT}[\rho_A] + c_*\left(||\rho_A||_{L^1\cap L^3(\R^3)}+||\rho_B||_{L^1\cap L^3(\R^3)}\right)
      ||\rho_A-\rho_B||_{L^1(\R^3)\cap L^3(\R^3)},
$$
as required.

\textbf{Step 1:} $C(P)\le C(\gamma_{A\rightarrow A}^{opt})+ 3M$ with $M=\sup_{y\in\R^3}\int_{\R^3}c(y,w)f_B(w)\,dw$:

By substituting the expressions (\ref{gamma12}) and (\ref{gamma21}) into (\ref{combplan}), we get
\begin{multline}
\label{upsest}
C(P)=\int_{\R^3}\int_{\R^3}\int_{\R^3}\int_{\R^3}c(x,w)\left[f(x)\delta_x(y)+\frac{f_B(x)f_A(y)}{\int_{\R^3} f_A(y)\, dy}\right] \,\frac{\chi_{\rho_A>0}(y)}{\rho_A(y)}\,\gamma_{A\rightarrow A}^{opt}(y,z)\frac{\chi_{\rho_A>0}(z)}{\rho_A(z)}\\
\left[f(z)\delta_z(w)+\frac{f_A(z)f_B(w)}{\int_{\R^3} f_B(w)\, dw}\right]\,dx\,dy\,dz\,dw.
\end{multline}
This is a sum of four terms, which arise by picking one term from each square bracket and carrying out the integrals over the delta functions:
$$
  W_1=\int_{\R^3}\int_{\R^3}c(y,z)f(y)\,\frac{\chi_{\rho_A>0}(y)}{\rho_A(y)}\,
  \gamma_{A\rightarrow A}^{opt}(y,z)\frac{\chi_{\rho_A>0}(z)}{\rho_A(z)}f(z)\,dy\,dz,
$$
$$W_2=\int_{\R^3}\int_{\R^3}\int_{R^3}c(y,w)f(y)\,\frac{\chi_{\rho_A>0}(y)}{\rho_A(y)}\,\gamma_{A\rightarrow A}^{opt}(y,z)\frac{\chi_{\rho_A>0}(z)}{\rho_A(z)}\frac{f_A(z)f_B(w)}{\int_{\R^3} f_B(w)\, dw}\,dy\,dz\,dw,$$
$$W_3=\int_{\R^3}\int_{\R^3}\int_{\R^3}c(x,z)\frac{f_B(x)f_A(y)}{\int_{\R^3} f_A(y)\, dy}\,\frac{\chi_{\rho_A>0}(y)}{\rho_A(y)}\,\gamma_{A\rightarrow A}^{opt}(y,z)\frac{\chi_{\rho_A>0}(z)}{\rho_A(z)}f(z)\,dx\,dy\,dz$$
and
$$
  W_4=\int_{\R^3}\int_{\R^3}\int_{\R^3}\int_{\R^3}c(x,w)\frac{f_B(x)f_A(y)}{\int_{\R^3} f_A(y)\, dy}\,\frac{\chi_{\rho_A>0}(y)}{\rho_A(y)}\,\gamma_{A\rightarrow A}^{opt}(y,z)\frac{\chi_{\rho_A>0}(z)}{\rho_A(z)}\\
  \frac{f_A(z)f_B(w)}{\int_{\R^3} f_B(w)\, dw}\,dx\,dy\,dz\,dw.
$$
Next, we will estimate each of these four terms. For the first term, we use the simple estimate that $f\chi_{\rho_A>0}\le\rho_A$, which gives
$$W_1\le\int_{\R^3}\int_{\R^3}c(y,z)\,\gamma_{A\rightarrow A}^{opt}(y,z)\,dy\,dz=C(\gamma_{A\rightarrow A}^{opt}).$$
For the second term, we estimate $\int_{\R^3} c(y,w)\,dw$ by the constant $M$ defined in Step 2 and
$f(y)\,\frac{\chi_{\rho_A>0}(y)}{\rho_A(y)}$ by $1$, and we get
\begin{eqnarray*}
  W_2 &\le & \frac{M}{\int_{\R^3} f_B(w)\,dw}\int_{\R^3}\int_{\R^3}\gamma_{A\rightarrow A}^{opt}(y,z)\frac{\chi_{\rho_A>0}(z)}{\rho_A(z)}f_A(z)\,dy\,dz
        \\
      &  = & \frac{M}{\int_{\R^3} f_B(w)\,dw}\int_{\R^3}\chi_{\rho_A>0}(z)f_A(z)\,dz = M.
\end{eqnarray*}
Analogously, by the change of variables $(y,z,w)\mapsto (z,y,x)$ we have
$$W_3=W_2\le M.$$
Finally, to bound $W_4$ we estimate $\int_{\R^3} c(x,w)f_B(w)\,dw$ by $M$ and
$f_A(y)\,\frac{\chi_{\rho_A>0}(y)}{\rho_A(y)}$ by $1$, and we obtain
\begin{eqnarray*}
W_4&\le& \frac{M}{\left(\int_{\R^3}f_B(w)\,dw\right)^2}\int_{\R^3}\int_{\R^3}\int_{\R^3}f_B(x)\,\gamma_{A\rightarrow A}^{opt}(y,z)f_A(z)\frac{\chi_{\rho_A>0}(z)}{\rho_A(z)} \,dx\,dy\,dz\\
&\le &\frac{M}{\left(\int_{\R^3}f_B(w)\,dw\right)^2}\left[\int_{\R^3}f_B(x)\,dx\right]
   \left[\int_{\R^3}\int_{\R^3}\gamma_{A\rightarrow A}^{opt}(y,z)\,dy f_A(z)\frac{\chi_{\rho_A>0}(z)}{\rho_A(z)}\,dz\right] = M.
\end{eqnarray*}
Plugging the above bounds for $W_1, W_2, W_3,W_4$ into (\ref{upsest}) yields the assertion.

\textbf{Step 2.} For $g\in L^1\cap L^2(\R^3)$, we have:
\be \label{ineq}
   \sup_{x\in\R^3}\left|\int_{\R^3}\frac{1}{|x-y|}g(y)\,dy\right|\le c_0 \max\{||g||_{L^1(\R^3)},||g||_{L^3(\R^3)}\},
\ee
with $c_0=2\left(\frac{8\pi}{3}\right)^{1/3}$.

To prove this, we split $\frac{1}{|x-y|}$ into a short-range and a long-range part,
$$\frac{1}{|z|}=\frac{\chi_{|z|<a}}{|z|}+\frac{\chi_{|z|\ge a}}{|z|}=:h_s(z)+h_l(z),$$
with the obvious definitions for $h_s$ and $h_l$, and with cut-off parameter $a>0$ to be chosen later. Note that $h_s\in L^{3/2}(\R^3)$ and $h_l\in L^\infty(\R^3)$. By H\"older's inequality we have
\begin{multline*}
\left|\int_{\R^3}\frac{1}{|x-y|}g(y)\,dy\right|=\left|\int_{\R^3}h_s(x-y)g(y)\,dy+\int_{\R^3}h_l(x-y)g(y)\,dy \right|\le ||h_s||_{L^{3/2}(\R^3)}||g||_{L^3(\R^3)}\\
+||h_l||_{L^{\infty}(\R^3)}||g||_{L^1(\R^3)}\le ||g||_{L^1\cap L^3(\R^3)}\left(||h_s||_{L^{3/2}(\R^3)}
+||h_l||_{L^{\infty}(\R^3)}\right).
\end{multline*}
Explicitly,
$$||h_s||_{L^{3/2}(\R^3)}+||h_l||_{L^{\infty}(\R^3)}=\left(4\pi\int_0^ar^2\frac{1}{r^{3/2}}\,dr\right)^{2/3}+\frac{1}{a}=\left(\frac{8\pi}{3}\right)^{2/3}a+\frac{1}{a}.$$
Minimizing over $a$ in the above gives $a=\left(\frac{8\pi}{3}\right)^{-1/3}$, leading to the value of $c_0$ in the assertion.

\textbf{Step 3.} Putting it all together:

By Steps $1$ and $2$ we have
$$C(P)\le C(\gamma_{A\rightarrow A}^{opt})+3c_0||f_B||_{L^1\cap L^3(\R^3)}.$$
But $0\le f_B\le |\rho_A-\rho_B|$, so $||f_B||_{L^1\cap L^3(\R^3)}\le ||\rho_A-\rho_B||_{L^1\cap L^3(\R^3)}$.
This establishes (\ref{varbd}) and Theorem \ref{T:cont}, with $c_*=3c_0 = 6 (\frac{8\pi}{3})^{1/3}$.
\end{proof}

\subsection{Finiteness of kinetic energy}

In this section we investigate the behaviour of derivatives of the combined plan
$\gamma_{B\rightarrow B}=P$ introduced in (\ref{combplan}) when the original
plan $\gamma_{A\rightarrow A}$ is differentiable.

Recall that $\gamma_{A\rightarrow A}$ is a transport plan of $C$ subject to the
constraint of equal marginals $\rho_A$, $\gamma_{A\rightarrow B}$ was defined
in (\ref{gamma12}), and $\gamma_{B\rightarrow A}$ is the reverse plan (\ref{gamma21}). Unlike in
the previous section, here $\gamma_{A\rightarrow A}$ does not need to be optimal. Due to the
fact that wave functions correspond, up to integrating out variables, to square roots of pair densities,
and the kinetic energy of a wave function is
$\frac{h^2}{2}\int|\nabla\Psi|^2$, we have to show that $\nabla\sqrt{\gamma_{B\rightarrow B}}\in L^2$,
in order to be able to construct an admissible trial function with pair density $\gamma$
in the variational definition of the Hohenberg-Kohn
density functional $F_{HK}$. The following result gives hypotheses under which this is true.
Before stating the result we introduce the following notion which we call {\it strong positivity}.

\begin{defn} \label{D:pos} A transportation plan $\gamma\in\calp(\R^{2d})$ with marginals $\mu$, $\nu\in\calp(\R^d)$
is called {\it strongly positive} if there exists a constant $\beta>0$ such that
$$
    \gamma \ge \beta \mu\otimes\nu.
$$
\end{defn}
We note that strong positivity implies, in particular, that the support of $\gamma$ is the product of the
supports of its marginals, $supp~\gamma=supp~\mu\times supp~\nu$.
\begin{theorem}
\label{T:grad}
Suppose that $\rho_A,\rho_B\ge 0,\sqrt{\rho_A},\sqrt{\rho_B}\in H^1(\R^3)$, and assume that
$\gamma_{A\rightarrow A}$ belongs to the set $\calM_+(\R^6)$ (see Section 3) and has equal marginals $\rho_A$.
\begin{enumerate}
\item [(i)] $\sqrt{\gamma_{A\rightarrow A}^{opt}}\in H^1(\R^3)$~~~~(smoothness);
\item [(ii)] $\gamma_{A\rightarrow A}^{opt}\ge\beta\rho_A\otimes\rho_A$ for some constant $\beta>0$ (strong positivity).
\end{enumerate}
Then the plan $\gamma_{B\rightarrow B}=P$ defined in (\ref{combplan}) satisfies $\sqrt{P}\in H^1(\R^6)$.
\end{theorem}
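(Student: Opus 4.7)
My plan is to prove $\sqrt P\in H^1(\R^6)$ by a Fisher-information splitting. Recall that the pointwise inequality $(a+b)^2/(p+q)\le a^2/p+b^2/q$ yields $I(p+q)\le I(p)+I(q)$ for the Fisher information $I(u):=\int|\nabla u|^2/u$ and any non-negative $p,q$, hence $4\int|\nabla\sqrt P|^2=I(P)\le\sum_j I(P_j)$ for any non-negative splitting $P=\sum_j P_j$. Substituting \eqref{gamma12}--\eqref{gamma21} into \eqref{combplan} and integrating out the Dirac masses produces the four-piece decomposition $P=P_1+P_2+P_3+P_4$ with
\[
P_1=\tfrac{f(x)f(w)}{\rho_A(x)\rho_A(w)}\gamma_{A\to A}^{opt}(x,w),\ \ P_2=A(x)f_B(w),\ \ P_3=f_B(x)A(w),\ \ P_4=\tfrac{D}{a_B^2}f_B(x)f_B(w),
\]
where $f=\min(\rho_A,\rho_B)$, $f_A=(\rho_A-\rho_B)_+$, $f_B=(\rho_B-\rho_A)_+$, $a_B=\int f_B$, $g(x)=\int\gamma_{A\to A}^{opt}(x,z)f_A(z)/\rho_A(z)\,dz$, $A=fg/(a_B\rho_A)$, and $D=\int f_A\,g/\rho_A$, with the convention $0/0:=0$.

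The piece $P_1$ is controlled by hypothesis (i) alone. Writing $\sqrt{P_1}=\phi(x)\phi(w)\sqrt{\gamma_{A\to A}^{opt}}$ with $\phi=\sqrt{f/\rho_A}\,\chi_{\rho_A>0}\in[0,1]$, the product rule splits $\nabla\sqrt{P_1}$ into two contributions. The one involving $\nabla\sqrt{\gamma_{A\to A}^{opt}}$ is bounded in $L^2$ via $\phi\le 1$ and (i). The one involving $\nabla\phi$ is handled by first eliminating $w$ through the marginal identity $\int\gamma_{A\to A}^{opt}(x,w)\,dw=\rho_A(x)$ and then invoking the pointwise bound $|\nabla\phi|^2\rho_A\le 2|\nabla\sqrt f|^2+2|\nabla\sqrt{\rho_A}|^2$, which follows by rewriting the identity $\phi\sqrt{\rho_A}=\sqrt f$ as $\sqrt{\rho_A}\,\nabla\phi=\nabla\sqrt f-\phi\nabla\sqrt{\rho_A}$. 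The right-hand side is finite because $\sqrt f=\min(\sqrt{\rho_A},\sqrt{\rho_B})\in H^1$ as the pointwise minimum of two $H^1$ functions.

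The main obstacle, and the place where hypothesis (ii) is indispensable, is the control of $I(P_2+P_3+P_4)$. Indeed $\sqrt{f_B}=\sqrt{(\rho_B-\rho_A)_+}$ is generically \emph{not} in $H^1(\R^3)$ (square-root growth across the transversal zeros of $\rho_B-\rho_A$), so each of $I(P_2), I(P_3), I(P_4)$ is infinite separately and a finer splitting than $\{P_1,\;P_2+P_3+P_4\}$ is fatal. Strong positivity supplies the estimates $g\ge\beta a_B\rho_A$, $A\ge\beta f$, $D\ge\beta a_B^2$ by inserting $\gamma_{A\to A}^{opt}\ge\beta\rho_A\otimes\rho_A$ into the defining integrals, whence the key pointwise lower bound
\[
P_2+P_3+P_4\ \ge\ \beta\bigl[f(x)f_B(w)+f_B(x)f(w)+f_B(x)f_B(w)\bigr].
\]
I would then differentiate the explicit expression $P_2+P_3+P_4=A(x)f_B(w)+f_B(x)[A(w)+Cf_B(w)]$ with $C=D/a_B^2$ and match the $|\nabla A(x)|^2f_B(w)^2$- and $|\nabla f_B(x)|^2[A(w)+Cf_B(w)]^2$-contributions appearing in $|\nabla_x(P_2+P_3+P_4)|^2$ termwise against the three summands of the lower bound. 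The first is controlled by the Cauchy--Schwarz estimate $|\nabla g(x)|^2\le 4\rho_A(x)\int|\nabla_x\sqrt{\gamma_{A\to A}^{opt}}|^2\,dz$ coming from (i); the second is treated using $f+f_B=\rho_B$ together with $|\nabla f_B|^2\le 2|\nabla\rho_A|^2+2|\nabla\rho_B|^2$ and the identity $|\nabla\rho|^2/\rho=4|\nabla\sqrt\rho|^2$. The hard part I expect is carrying out this pairing cleanly enough that no residual $\int|\nabla f_B|^2/f_B$-type term survives -- and this cancellation is, morally, precisely what the strong positivity hypothesis is buying us.
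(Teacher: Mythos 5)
Your setup is sound and runs parallel to the paper's own proof: the four-term product decomposition of $P$ is correct, the Fisher-information subadditivity $I(p+q)\le I(p)+I(q)$ is exactly the convexity inequality the paper uses implicitly when it bounds $|\nabla\sqrt{P}|^2$ by discarding parts of the denominator, your treatment of $P_1$ reproduces the bounds on $\tilde W_1,\tilde W_2,\tilde W_3$, and you correctly identify both that $\sqrt{f_B}\notin H^1$ in general (the paper gives the explicit counterexample $\rho_{A,B}=(1\mp x+x^2)e^{-x^2}$) and that strong positivity must enter precisely in the $\nabla f_B$ term. But the decisive step is missing, and the mechanism you propose for it would fail. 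The contribution $\nabla_x(P_3+P_4)=\nabla f_B(x)\,[A(w)+Cf_B(w)]$ must be divided by \emph{some} lower bound on $P$, and each of the two natural ``termwise'' matchings diverges: matching against $P_3+P_4=f_B(x)[A(w)+Cf_B(w)]$ leaves exactly the forbidden $\int|\nabla f_B|^2/f_B$, while matching against $P_2\ge\beta f(x)f_B(w)$ leaves the $w$-integral $\int [A(w)+Cf_B(w)]^2/f_B(w)\,dw$, which is $+\infty$ because $A(w)\ge\beta f(w)>0$ on the set $\{\rho_A\ge\rho_B>0\}$ where $f_B(w)=0$. There is no pointwise cancellation to be had; no single choice of denominator works on all of $\R^3\times\R^3$.

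What actually closes the argument (and what the paper does) is a dichotomy in $x$. Fix $\delta\in(0,1)$. On the region $\{f_B\ge\delta\rho_B\}$ you may safely use the ``bad'' denominator, since there $|\nabla f_B|^2/f_B\le|\nabla f_B|^2/(\delta\rho_B)\le\tfrac{2}{\delta}\,\chi_{\rho_B>\rho_A}(|\nabla\rho_A|^2+|\nabla\rho_B|^2)/\rho_B\le\tfrac{8}{\delta}(|\nabla\sqrt{\rho_A}|^2+|\nabla\sqrt{\rho_B}|^2)$, using $\rho_B\ge\rho_A$ on $\{f_B>0\}$. On the complementary region $\{0<f_B<\delta\rho_B\}$ one has $f=\rho_A\ge(1-\delta)\rho_B$, and here strong positivity is used not through your three lower bounds on $P_2,P_3,P_4$ separately but through the single bound $P\ge\beta f(x)\rho_B(w)$ (the paper's (\ref{lowbd3})), combined with the estimate $\sqrt{\tilde g(w)}\le\sqrt{\rho_B(w)}$ to tame the $w$-factor; this yields $\int|\tilde W_4(x,\cdot)|^2\lesssim\beta^{-1}|\nabla f_B(x)|^2/f(x)\lesssim\beta^{-1}(1-\delta)^{-1}(|\nabla\sqrt{\rho_A}|^2+|\nabla\sqrt{\rho_B}|^2)$. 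Without this case distinction your argument cannot be completed as stated, so you should regard the final paragraph of your proposal as an open step rather than a sketch; with it, your decomposition assembles into a complete proof equivalent to the paper's.
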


\begin{proof}
Plugging formula (\ref{gamma21}) for $\gamma_{B\rightarrow A}$ into (\ref{combplan})
and using that $\rho_A,\rho_B\ge 0$ and that $\int_{\R^3} f_A(y)\,dy=\int_{\R^3} f_B(y)\,dy$, we have
$$
P(x,w)
=\frac{f(x)}{\rho_A(x)}\int_{\R^3}\gamma_{A\rightarrow A}^{opt}(x,z))\frac{\gamma_{A\rightarrow B}(z,w)}{\rho_A(z)}
\,dz +\frac{f_B(x)}{\int_{\R^3} f_B(y)\, dy}
\int_{\R^3}\int_{R^3}f_A(y)\frac{\gamma_{A\rightarrow A}^{opt}(y,z)}{\rho_A(y)\rho_A(z)}\,
\gamma_{A\rightarrow B}(z,w)\,dy\,dz.
$$
Consequently,
\begin{eqnarray*}
\lefteqn{\nabla\sqrt{P(x,w)}= \frac{1}{2\sqrt{P(x,w)}}}\\
&& \bigg[\frac{\nabla
f(x)}{\rho_A(x)}\int_{\R^3}\gamma_{A\rightarrow A}^{opt}(x,z))\frac{\gamma_{A\rightarrow B}(z,w)}{\rho_A(z)}\,dz
-\frac{f(x)}{\rho_A(x)}\frac{\nabla\rho_A(x)}{\rho_A(x)}\int_{\R^3}\gamma_{A\rightarrow A}^{opt}(x,z))
\frac{\gamma_{A\rightarrow B}(z,w)}{\rho_A(z)}\,dz
 \\
&&+\frac{f(x)}{\rho_A(x)}\int_{\R^3}\nabla_x\gamma_{A\rightarrow A}^{opt}(x,z))
\frac{\gamma_{A\rightarrow B}(z,w)}{\rho_A(z)}\,dz +
\frac{\nabla f_B(x)}{\int_{\R^3} f_B(y)\, dy}
\int_{\R^3}\int_{R^3}f_A(y)\frac{\gamma_{A\rightarrow A}^{opt}(y,z)}{\rho_A(y)\rho_A(z)}\,
\gamma_{A\rightarrow B}(z,w)\,dy\,dz\bigg]\\
&=&:\tilde{W}_1+\tilde{W}_2+\tilde{W}_3+\tilde{W}_4,
\end{eqnarray*}
with the obvious definitions for $\tilde{W}_1,\tilde{W}_2,\tilde{W}_3$ and $\tilde{W}_4$. We have to show that
$\int_{\R^3}|\tilde{W}_i(\cdot,w)|^2\,dw\in L^1(\R^3)$ for $i=1,\ldots,4$. To estimate the first two terms, we use the following lower bound on $P$ which neglects the contribution from $f_B(x)$ in $P(x,w)$.
\begin{equation}
\label{lowbd1}
P(x,w)\ge\frac{f(x)}{\rho_A(x)}\int_{\R^3}\gamma_{A\rightarrow A}^{opt}(x,z))\frac{\gamma_{A\rightarrow B}(z,w)}{\rho_A(z)}\,dz=:\frac{f(x)}{\rho_A(x)}g(x,w).
\end{equation}
It follows that
$$|{\tilde{W}}_1(x,w)|\le\frac{1}{2}\sqrt{\frac{\rho_A(x)}{f(x)g(x,w)}}\,\,\frac{\nabla f(x)}{\rho_A(x)} g(x,w)~~\mbox{and}~~|{\tilde{W}}_2(x,w)|\le\frac{1}{2}\sqrt{\frac{\rho_A(x)}{f(x)g(x,w)}} \left(-\frac{f(x)}{\rho_A(x)}\frac{\nabla\rho_A(x)}{\rho_A(x)}\right) g(x,w)$$
and hence
$$|{\tilde{W}}_1(x,w)|^2\le\frac{1}{4}\frac{|\nabla f(x)|^2}{f(x)\rho_A(x)} g(x,w)~~~\mbox{and}~~~|{\tilde{W}}_2(x,w)|^2\le\frac{1}{4}\frac{|\nabla\rho_A(x)|^2f(x)}{\rho_A(x)^3} g(x,w).$$
Next, due to $\int_{R^3}\gamma_{A\rightarrow B}(z,w)dw=\rho_A(z)$, we have that
$$\int_{\R^3} g(x,w)\,dw=\int_{\R^3}\frac{\gamma_{A\rightarrow A}^{opt}(x,z)}{\rho_A(z)}\int_ {\R^3}\gamma_{A\rightarrow B}(z,w)\,dw= \int_{\R^3}\gamma_{A\rightarrow A}^{opt}(x,z)\,dz=\rho_A(x).$$
Consequently, using the fact that $|\nabla\sqrt{a}|^2=\frac{1}{4}\frac{|\nabla a|^2}{a}$ for any function $a$, we have
\begin{equation}
\label{T1tilde}
\int_{\R^3}|{\tilde{W}}_1(x,w)|dw\le\frac{1}{4}\frac{|\nabla f(x)|}{f(x)}=|\nabla\sqrt{f}|^2~~~\mbox{and}~~~\int_{\R^3}|{\tilde{W}}_2(x,w)|dw\le\frac{1}{4}\frac{f(x)}{\rho_A(x)}\frac{|\nabla\rho_A(x)|^2}{\rho_A(x)}\le |\nabla\sqrt{\rho_A}|^2,
\end{equation}
where in the last inequality we have used $f\le\rho_A$. Since $\sqrt{f}=\min\{\sqrt{\rho_A},\sqrt{\rho_B}\}$,
and $\sqrt{\rho_A},\sqrt{\rho_B}\in H^1(\R^3)$, by a standard fact concerning Sobolev functions we have $\sqrt{f}\in H^1(\R^3)$ and
$$\nabla\sqrt{f}=\chi_{\rho_A>\rho_B}\nabla\sqrt{\rho_B}+\chi_{\rho_A\le\rho_B}\nabla\sqrt{\rho_A}~~~\mbox{a.e.}.$$
Consequently,
\begin{equation}
\label{T2tilde}
\int_{\R^3}|{\tilde{W}}_1(x,w)dw\le |\nabla\sqrt{f}|^2\le|\nabla\sqrt{\rho_B}|^2+|\nabla\sqrt{\rho_A}|^2.
\end{equation}
Next we analyze ${\tilde{W}}_3$. To this end, we make use of the identity
$$
     \nabla_x\gamma_{A\rightarrow A}^{opt}(x,z)=\nabla_x\Bigl(\sqrt{\gamma_{A\rightarrow A}(x,z)}\Bigr)^2=2\sqrt{\gamma_{A\rightarrow A}^{opt}(x,z)}\,\,
     \nabla_x\sqrt{\gamma_{A\rightarrow A}^{opt}(x,z)}.
$$
Together with (\ref{lowbd1}) this yields
$$|{\tilde{W}}_3|\le\sqrt{\frac{\rho_A(x)}{f(x)g(x,w)}}\frac{f(x)}{\rho_A(x)}\left|\int_{\R^3}\sqrt{\gamma_{A\rightarrow A}^{opt}(x,z)}\,\,\nabla_x\sqrt{\gamma_{A\rightarrow A}^{opt}(x,z)}\frac{\gamma_{A\rightarrow B}(z,w)}{\rho_A(z)}\,dz\right|.$$
To estimate the integral over $z$ in the formula above, we write
$$\frac{\gamma_{A\rightarrow B}(z,w)}{\rho_A(z)}=\sqrt{\frac{\gamma_{A\rightarrow B}(z,w)}{\rho_A(z)}}\sqrt{\frac{\gamma_{A\rightarrow B}(z,w)}{\rho_A(z)}},$$
group one of these factors with $\sqrt{\gamma_{A\rightarrow A}^{opt}(x,z)}$ and one with $\nabla_x\sqrt{\gamma_{A\rightarrow A}^{opt}(x,z)}$, and apply the Cauchy-Schwarz inequality. This yields
$$|{\tilde{W}}_3|\le\sqrt{\frac{f(x)}{\rho_A(x)g(x,w)}}\sqrt{g(x,w)}\sqrt{\int_{\R^3}\left|\nabla_x\sqrt{\gamma_{A\rightarrow A}^{opt}(x,z)}\right|^2\frac{\gamma_{A\rightarrow B}(z,w)}{\rho_A(z)}\,dz}$$
and hence
$$|{\tilde{W}}_3|^2\le\frac{f(x)}{\rho_A(x)}\int_{\R^3}\left|\nabla_x\sqrt{\gamma_{A\rightarrow A}^{opt}(x,z)}\right|^2\frac{\gamma_{A\rightarrow B}(z,w)}{\rho_A(z)}\,dz.$$
Integrating over $w$ and using that $\int_{\R^3}\frac{\gamma_{A\rightarrow B}(z,w)}{\rho_A(z)}\,dw=1$ gives
\begin{equation}
\label{T3tilde}
\int_{\R^3}|{\tilde{W}}_3(x,w)|^2\,dw\le\frac{f(x)}{\rho_A(x)}\int_{\R^3}\left|\nabla_x\sqrt{\gamma_{A\rightarrow A}^{opt}(x,z)}\right|^2\,dz\le\int_{\R^3}\left|\nabla_x\sqrt{\gamma_{A\rightarrow A}^{opt}(x,z)}\right|^2\,dz.
\end{equation}
Finally for ${\tilde{W}}_4$ is is natural to use a different lower bound for $P$ than the one in (\ref{lowbd1}), obtained by neglecting the first instead of the second term in $P(x,w)$.
\begin{equation}
\label{lowbd2}
P(x,w)\ge\frac{f_B(x)}{\int_{\R^3} f_B(y)\, dy}\int_{\R^3}\int_{R^3}f_A(y)\frac{\gamma_{A\rightarrow A}^{opt}(y,z)}{\rho_A(y)\rho_A(z)}\,
\gamma_{A\rightarrow B}(z,w)\,dy\,dz=:\frac{f_B(x)}{\int_{\R^3} f_B(y)\, dy}\tilde{g}(w).
\end{equation}
Substituting this estimate into the definition for ${\tilde{W}}_4$ immediately gives
$$|{\tilde{W}}_4|\le\frac{1}{2}\frac{|\nabla f_B(x)|}{\sqrt{f_B(x)}}\sqrt{\frac{\tilde{g}(w)}{\int_{\R^3}f_B(y)\,dy}}$$
and, after squaring, integrating over $w$, and using $\int_{\R^3}\tilde{g}(w)\,dw=\int_{\R^3}f_B(y)\,dy$, we get
\begin{equation}
\label{T4tilde1}
\int_{\R^3}|{\tilde{W}}_4(x,w)|\,dw\le\frac{1}{4}\frac{|\nabla f_B(x)|^2}{f_B(x)}=|\nabla\sqrt{f_B}|^2.
\end{equation}
But unlike the analogous bounds on ${\tilde{W}}_1,{\tilde{W}}_2,{\tilde{W}}_3,$ this estimate is insufficient to infer ${\tilde{W}}_4\in H^1(\R^6)$
since $\sqrt{\rho_A},\sqrt{\rho_B}\in H^1(\R^3)$ do \textbf{not} imply that the function
$$\sqrt{f_B}=\sqrt{\chi_{\rho_B>\rho_A}(\rho_B-\rho_A)}$$
belongs to $H^1$. In fact, even when $\sqrt{\rho_A},\sqrt{\rho_B}$ are positive and belong to $C^{\infty}$, $\sqrt{f}$ need not be in $H^1_{loc}$.

\bexam Let $\rho_A(x)=(1-x+x^2)e^{-x^2}$ and $\rho_B(x)=(1+x+x^2)e^{-x^2}$. Because $1\pm x+x^2\ge\frac{1}{2}(1+x^2)$ is bounded away from zero,
we have $\sqrt{\rho_A},\sqrt{\rho_B}\in H^1(\R)$, but $\sqrt{\chi_{\rho_B>\rho_A}(\rho_B-\rho_A)}=\chi_{(0,\infty)}(x)\sqrt{2x}e^{-x^2/2}\notin H^1(\R)$
since $|\nabla\sqrt{\chi_{\rho_B>\rho_A}(\rho_B-\rho_A)}|^2=\chi_{(0,\infty)}(x)\left(\frac{1}{2x}-2x+2x^3\right)e^{-x^2}\notin L^1(\R)$.
\eexam

Note that this example captures the \textit{generic} behaviour of $f$ near a point where any two smooth functions $\rho_A$ and $\rho_B$ cross.
This effect is the reason why the additional assumption (ii) was made in Theorem \ref{T:grad}. This assumption, together with (\ref{lowbd1}),
yields the following alternative lower bound on $P$
\begin{equation}
\label{lowbd3}
P(x,w)\ge\frac{f(x)}{\rho_A(x)}\int_{\R^3}\beta\rho_A(x)\rho_A(z)\frac{\gamma_{A\rightarrow B}(z,w)}{\rho_A(z)}\,dz=\beta f(x)\rho_B(w).
\end{equation}
We fix a number $\delta\in (0,1)$ and we use the lower bounds (\ref{lowbd2}) or (\ref{lowbd3}), depending on whether $f_B(x)\ge\delta\rho_B(x)$ or $f_B(x)<\delta\rho_B(x)$.

\textbf{Region 1:} Assume $f_B(x)\ge\delta\rho_B(x)$. Via (\ref{lowbd2}) and (\ref{T4tilde1}) we obtain
\begin{eqnarray}
\label{T4tilde2}
\chi_{f_B\ge\delta\rho_B}\int_{\R^3}|{\tilde{W}}_4(\cdot,w)|^2 dw
&\le& \frac{1}{4\delta}\frac{|\nabla f_B|^2}{\rho_B}
\le\frac{1}{2\delta}\chi_{\rho_B>\rho_A} \frac{|\nabla\rho_A|^2+|\nabla\rho_B|^2}{\rho_B}\nonumber\\
&\le & \frac{1}{2\delta}\chi_{\rho_B>\rho_A} \left(\frac{|\nabla\rho_A|^2}{\rho_A}+\frac{|\nabla\rho_B|^2}{\rho_B}\right)
 \le \frac{2}{\delta}\left(|\nabla\sqrt{\rho_A}|^2+|\nabla\sqrt{\rho_B}|^2 \right).
\end{eqnarray}

\textbf{Region 2:} Assume $f_B(x)\le\delta\rho_B(x)$. First of all, note that whenever $f_B(x)>0$, i.e. $\rho_B(x)>\rho_A(x)$, we have the following equivalences
$$f_B(x)\le\delta\rho_B(x)\Leftrightarrow\rho_B(x)-\rho_A(x)\le\delta\rho_B(x)\Leftrightarrow\rho_B(x)(1-\delta)\le\rho_A(x)=\min\{\rho_A(x),\rho_B(x)\}=f(x).$$
Via (\ref{lowbd3}) we have
$$|{\tilde{W}}_4|\le\frac{1}{2\sqrt{\beta f(x)\rho_B(w)}}\,\,\frac{|\nabla f_B(x)|}{\int_{\R^3} f_B(y)\,dy}\,\,\tilde{g}(w). $$
We split the factor ${\tilde{g}}(w)$ into $\sqrt{{\tilde{g}}(w)} \sqrt{{\tilde{g}}(w)}$ and estimate one of the factors via the elementary
inequality $f_A(y)\le\rho_A(y),$ so as to eliminate the bad factor $\sqrt{\rho_B}$ from the denominator:
\begin{eqnarray*}
\sqrt{{\tilde{g}}(w)}&=&\left(\int_{\R^3}\int_{\R^3}\frac{f_A(y)}{\rho_A(y)}\frac{\gamma_{A\rightarrow A}^{opt}(y,z)}{\rho_A(z)}\gamma_{A\rightarrow B}(z,w)dy
dz\right)^{1/2}\le\left(\int_{\R^3}\int_{\R^3}\gamma_{A\rightarrow A}^{opt}(y,z)dy \frac{\gamma_{A\rightarrow B}(z,w)}{\rho_A(z)}dz\right)^{1/2}\\
&=&\left(\int_{\R^3}\rho_A(z)\frac{\gamma_{A\rightarrow B}(z,w)}{\rho_A(z)}dz\right)^{1/2}=\sqrt{\rho_B(w)}.
\end{eqnarray*}
Consequently,
$$|{\tilde{W}}_4|\le\frac{1}{2\sqrt{\beta f(x)}}\frac{|\nabla f_B(x)|}{\int_{\R^3} f_B(y)dy}\sqrt{{\tilde{g}}(w)}.$$
Squaring, integrating over $w$ and using $\int_{\R^3}{\tilde{g}}(w)dw=\int_{\R^3} f_B(y) dy$ yields
\begin{equation}
\label{T4tilde3}
\int_{\R^3}|{\tilde{W}}_4(x,w)|^2dw\le\frac{1}{4\beta}\frac{|\nabla f_B(x)|^2}{f(x)}\frac{1}{\int_{\R^3}f _2(y) dy}.
\end{equation}
But in the region $\{x\, | \, f_B(x)>0\}=\{x\, |\, \rho_B(x)>\rho_A(x)\}$, as shown above we have $f=\rho_A\ge (1-\delta)\rho_B$, and consequently,
\begin{eqnarray}
\label{T4tilde4}
\chi_{f_B\le\delta\rho_B}\int_{R^3}|{\tilde{W}}_4(\cdot,w)|^2dw &\le&\frac{1}{2\beta}\frac{|\nabla\rho_A|^2+|\nabla\rho_B|^2}{f}\frac{1}{\int_{\R^3}f_B(y)dy}\nonumber\\
&\le&
\frac{1}{2\beta}\left(\frac{|\nabla\rho_A|^2}{\rho_A}+\frac{1}{1-\delta}\frac{|\nabla\rho_B|^2}{\rho_B}\right)\frac{1}{\int_{\R^3}f_B(y)dy}\nonumber\\
&=&\frac{2}{\beta}\left(|\nabla\sqrt{\rho_A}|^2+\frac{1}{1-\delta}|\nabla\sqrt{\rho_B}|^2\right)\frac{1}{\int_{\R^3}f_B(y)dy}.
\end{eqnarray}
Combining (\ref{T1tilde}), (\ref{T2tilde}), (\ref{T3tilde}), (\ref{T4tilde2}) and (\ref{T4tilde4}) establishes the theorem.
\end{proof}

\begin{remark}
In region $1$, the factor $\frac{1}{f(x)}$ appearing in (\ref{T4tilde3}) is uncontrollably bad. In region $2$, the factor $\frac{1}{f_B(x)}$ appearing in (\ref{T4tilde1})
is uncontrollably bad.
\end{remark}
\subsection{Smoothing}

The third ingredient needed in the proof of Theorem \ref{limit} lies in the fact that
the Coulomb cost functional $C[\gamma]=\int |x-y|^{-1} d\gamma(x,y)$ is well behaved under smoothing of $\gamma$,
despite the fact that the cost function $|x-y|^{-1}$ is discontinuous and hence does not belong to the dual of the space of probability measures on $\R^{6}$.

Let $\rho\in{\cal R}$ (see (\ref{admdens2})), and let $\gamma\in\Gamma(\rho,\rho)$ be a minimizer of $C[\gamma]=\int |x-y|^{-1}d\gamma(x,y)$
so that
$$
   C[\gamma]=E_{OT}[\rho].
$$
We now introduce a standard mollification of $\gamma$, as follows.
Let $\phi \, : \, \R^3\to\R$ belong to the Schwartz space
${\cal S}(\R^3)$ of smooth, rapidly decaying functions, and assume that $\phi>0$, $\int_{\R^3}\phi=1$,
$\phi$ radially symmetric. E.g., the choice $\phi(x)=\pi^{-3/2}e^{-|x|^2}$ will do. Let
$$
   \phi_\epsilon(x) = \frac{1}{\epsilon^{3}}\phi(\frac{x}{\epsilon}),
$$
and let $\gamma_\epsilon = (\phi_\epsilon\otimes\phi_\epsilon)*\gamma$, that is to say
\begin{equation} \label{smooth}
   \gamma_\epsilon(x,y) = \int_{\R^6}\phi_\epsilon(x-x')\phi_\epsilon(y-y')d\gamma(x',y').
\end{equation}
\begin{proposition} \label{P:smooth} The mollified pair density $\gamma_\epsilon$ introduced in (\ref{smooth})
satisfies
\begin{itemize}
\item[{(a)}] $\gamma_\epsilon\in C^\infty(\R^6)$, $\gamma_\epsilon>0$
\item[{(b)}] $\int \gamma_\epsilon(x,y)\, dx = \rho_\epsilon(y)$, $\int \gamma_\epsilon(x,y)\, dy =
\rho_\epsilon(x)$, where $\rho_\epsilon$ is the mollified marginal $(\phi_\epsilon * \rho)(x)=\int_{\R^3}\phi_\epsilon(x-x')\rho(x')dx'$.
\item[{(c)}] $C[\gamma_\epsilon]\le C[\gamma]$.
\end{itemize}
\end{proposition}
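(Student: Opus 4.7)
The proof splits naturally into three parts, with (a) and (b) being routine mollification facts and (c) containing the genuine content.

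For (a), I observe that $\phi_\epsilon \otimes \phi_\epsilon$ is a smooth, strictly positive, rapidly decaying function on $\R^6$, and $\gamma$ is a finite (probability) Radon measure. Standard convolution theory then yields $\gamma_\epsilon\in C^\infty(\R^6)$, and strict positivity follows since $\phi_\epsilon(x-x')\phi_\epsilon(y-y')>0$ everywhere. For (b), Fubini gives
\[
   \int \gamma_\epsilon(x,y)\,dy = \int \phi_\epsilon(x-x')\Bigl(\int \phi_\epsilon(y-y')\,dy\Bigr) d\gamma(x',y') = \int\phi_\epsilon(x-x')\,d\rho(x') = \rho_\epsilon(x),
\]
using that $\phi_\epsilon$ integrates to $1$ and that $\rho$ is the first marginal of $\gamma$; the second marginal identity is symmetric.

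The heart of the matter is (c). The plan is to transfer the smoothing from the measure to the kernel. Applying Fubini to the definition of $C[\gamma_\epsilon]$ yields
\[
   C[\gamma_\epsilon] = \int K_\epsilon(x',y')\,d\gamma(x',y'), \qquad
   K_\epsilon(x',y') := \iint \frac{\phi_\epsilon(x-x')\phi_\epsilon(y-y')}{|x-y|}\,dx\,dy.
\]
A change of variables $u=x-x'$, $v=y-y'$ and the substitution $w=u-v$, combined with the radial symmetry of $\phi_\epsilon$ (which gives $\phi_\epsilon(-\cdot)=\phi_\epsilon$), collapses the double convolution to a single one:
\[
   K_\epsilon(x',y') = (\psi_\epsilon * |\cdot|^{-1})(x'-y'), \qquad \psi_\epsilon := \phi_\epsilon * \phi_\epsilon,
\]
where $\psi_\epsilon$ is again a smooth, strictly positive, radially symmetric probability density on $\R^3$.

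It then suffices to prove the pointwise bound $(\psi_\epsilon * |\cdot|^{-1})(z) \le |z|^{-1}$ for all $z\in\R^3\setminus\{0\}$, which I view as the main obstacle and which is special to the three-dimensional Coulomb kernel. I would invoke Newton's shell theorem: for every $r>0$ the spherical average of $|z-w|^{-1}$ over $|w|=r$ equals $1/\max(|z|,r)$, which is $\le 1/|z|$ in either case ($|z|\ge r$ or $|z|<r$). Decomposing $\psi_\epsilon(w)\,dw$ into spherical shells and integrating yields
\[
   (\psi_\epsilon * |\cdot|^{-1})(z) = \int_0^\infty \widetilde{\psi}_\epsilon(r)\,4\pi r^2\,\frac{1}{\max(|z|,r)}\,dr \le \frac{1}{|z|}\int_{\R^3}\psi_\epsilon = \frac{1}{|z|}.
\]
Thus $K_\epsilon(x',y')\le |x'-y'|^{-1}$, and integrating against $\gamma$ gives $C[\gamma_\epsilon]\le C[\gamma]$, proving (c). I emphasize that this inequality relies crucially on superharmonicity of $|z|^{-1}$ in $\R^3$ and would fail for generic decreasing cost functions of the type considered in Section~3.
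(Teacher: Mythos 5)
Your proof is correct and follows essentially the same route as the paper: Fubini to move the mollification onto the cost function, then Newton's shell theorem to show the smeared Coulomb kernel is pointwise dominated by $1/|x'-y'|$. The only (cosmetic) difference is that you collapse the two mollifiers into the single radial density $\psi_\epsilon=\phi_\epsilon*\phi_\epsilon$ and apply Newton once, whereas the paper keeps them separate and applies the shell-average bound twice in succession.
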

\bproof (a): Smoothness is a standard fact concerning mollification of Radon measures, and positivity
is obvious from the positivity of $\phi$. \\[2mm]
(b): This follows from the elementary calculation
\begin{eqnarray*}
  \int\gamma_\epsilon(x,y) \, dx &=& \int \int \int \phi_\epsilon(x-x') \phi_\epsilon(y-y') d\gamma(x',y') \, dx \\
  & = & \int\int \phi_\epsilon(y-y') \, d\gamma(x',y') = \int \phi_\epsilon(y-y') \rho(y') dy'.
\end{eqnarray*}
(c): First, we claim that the cost functional evaluated at the mollified transport plan,
$C[\gamma_\epsilon]$, can be interpreted as a cost functional with modified cost function evaluated
at the original transport plan. Indeed, by Fubini's theorem
\begin{eqnarray*}
  C[\gamma_\epsilon] &=& \int\int c(x,y)\Bigl[ \int\int\phi_\epsilon(x-x')\phi_\epsilon(y-y') \,
                       d\gamma(x',y')\Bigr] \, dx \, dy \\
  &=& \int\int \Bigl[ \underbrace{\int\int\phi_\epsilon(x-x')\phi_\epsilon(y-y')c(x,y) \, dx\, dy}_{=:\tilde{c}(x',y')}\Bigr] \,
                       d\gamma(x',y').
\end{eqnarray*}
The modified cost function $\tilde{c}(x',y')$
appearing here has an interesting physical meaning which we will exploit to establish (c),
namely it is the electrostatic repulsion between the two charge distributions $\phi_\epsilon(\cdot - x')$
and $\phi_\epsilon(\cdot - y')$ (i.e., the charge distributions centered at $x'$ respectively $y'$ whose profile
is given by the mollifier $\phi_\epsilon$). Now it is a standard fact going back to Newton that the
electrostatic potential exerted by a radial charge distribution on a point outside it equals the
potential exerted by the same amount of charge placed at the centre,
$$
    \frac{1}{|S_r|}\int_{S_r} \frac{1}{|x-a|} dH^2(x) = \frac{1}{\max\{|a|,r\} },
$$
where $S_r$ denotes the sphere of radius $r$ around $0$, $H^2$ is the Hausdorff measure (area element)
on the sphere, and $|S_r|(=4\pi r^2)$ is the total area of the sphere. This together with the radial
symmetry of $\phi_\epsilon$ (i.e., $\phi_\epsilon(x)=\tilde{\phi_\epsilon}(|x|)$ for some function $\tilde{\phi_\epsilon}$)
gives
\begin{eqnarray} \label{newtonest}
   \int_{\R^3} \phi_\epsilon(x) \frac{1}{|x-a|} dx &=&
      \int_{r=0}^\infty \int_{x\in S_r} \tilde{\phi_\epsilon}(r) \frac{1}{|x-a|} dH^2(x)
     = \int_0^\infty |S_r| \, \tilde{\phi_\epsilon}(r) \frac{1}{\max\{|a|,r\} } dr \nonumber \\
    &\le & \Bigl( \int_0^\infty |S_r| \, \phi_\epsilon(r) \, dr\Bigr) \, \frac{1}{|a|} = \frac{1}{|a|}.
\end{eqnarray}
Hence by repeated application of (\ref{newtonest})
\begin{eqnarray}
  \tilde{c}(x',y') &=& \int\int \phi_\epsilon(x)\phi_\epsilon(y) \frac{1}{|x+x'-(y+y')|} dx \, dy \\
                   &\le & \int\phi_\epsilon(y)\frac{1}{|x'-(y+y')|} dy
                     =    \int \phi_\epsilon(y) \frac{1}{|y-(x'-y')|} dy \\
                   &\le & \frac{1}{|x'-y'|}.
\end{eqnarray}
This establishes (c).
\eproof
\subsection{Passage to the limit}

We are now in a position to give the
\\[2mm]
{\em Proof of Theorem~\ref{limit}}. Let $\rho$ be any density in $\calR$. Recall that
$\rho\in\calR$ implies that $\sqrt{\rho}$, $\nabla\sqrt{\rho}\in L^2(\R^3)$ and hence, by the
Sobolev embedding theorem, $\sqrt{\rho}\in L^6(\R^3)$, whence $\rho\in L^1\cap L^3(\R^3)$.

We have to show that $\lim_{\hbar\to 0}F_{HK}[\rho] = E_{OT}[\rho]$. We will do so via the following
strategy:
\begin{itemize}
\item Start from an optimal transport plan $\gamma$ with marginals $\rho$.
\item Smooth it.
\item Make it strongly positive (see Definition \ref{D:pos}), by mixing in a small amount of the mean field (i.e., tensor product) plan.
\item Re-instate the marginal constraint, via the technique introduced in Section \ref{S:Constraint}.
\item Infer from Theorem \ref{T:grad} that, unlike the original optimal transport plan
$\gamma$, the so-obtained modified plan $P$ is the pair density
(\ref{rho2})
of a wave function $\Psi$ with square-integrable gradient.
\item Pass to the semiclassical limit, by careful error estimates on the three modification
steps listed above (smoothing, achieving strong positivity, re-instating the constraint).
\end{itemize}
We now implement this strategy in detail.
Let $\gamma$ be an optimal transport plan of the Coulomb cost functional $C$ subject to equal
marginals $\rho$. (Of course we know from Section 3 that $\gamma$ is unique, but uniqueness is not needed here.)
For $\epsilon>0$, let $\gamma_\epsilon$ be its mollification (\ref{smooth}).
By Proposition \ref{P:smooth}, its right and left marginals are given by the mollification
$\rho_\epsilon=\phi_\epsilon * \rho$ of the density $\rho$. Finally
we introduce the ``strong positivization''
$$
  \gammatildeepsilonbeta := (1-\beta)\gamma_\epsilon + \beta \rho_\epsilon\otimes\rho_\epsilon,
$$
where $\beta\in(0,1)$.
Note that $\gammatildeepsilonbeta$ has the same marginals as $\gamma_\epsilon$, regardless of the value of $\beta$.

Observe now that
the transportation plan $\gammatildeepsilonbeta$ and the densities $\rho_\epsilon$, $\rho$ satisfy the assumptions
of Theorem \ref{T:grad}. Consequently, by Theorem \ref{T:grad} there exists a transportation plan
$P_{\epsilon,\beta}$ with marginals $\rho$ (i.e., with re-instated constraint)
whose square root belongs to $H^1(\R^6)$.

Now comes the only step where we use the assumption $N=2$. In this case we can achieve the (otherwise
highly nontrivial) antisymmetry condition on $\Psi$ appearing in (\ref{class}) purely by means of an antisymmetric
spin part. More precisely we define $\Psi\, : \, (\R^3\times\Z_2)^2\to\C$ by
$$
  \Psi(x,s,y,t) := \sqrt{P_{\epsilon,\beta}(x,y)}\frac{\alpha(s)\beta(t)-
                                                                   \beta(s)\alpha(t)}{\sqrt{2}},
$$
where $\alpha$, $\beta\, : \, \Z_2=\{\pm\frac12\}\to\C$ are given by $\alpha(s)=\delta_{1/2}(s)$,
$\beta(s)=\delta_{-1/2}(s)$. Then it is straightforward to check that $\Psi$ belongs to the admissible
set $\cala$ defined in (\ref{class}) and its
pair density, density, and kinetic energy are
$$
    \rho_B^{\Psi}= P_{\epsilon,\beta}, \;\;\;\; \rho^{\Psi} = \rho, \;\;\;\;
     T_\hbar[\Psi] = \frac{\hbar^2}{2m}\int_{\R^6}
     |\nabla\sqrt{P_{\epsilon,\beta}}|^2.
$$
It follows that
\begin{equation} \label{limitbd0}
   \lim_{\hbar\to 0}F_{HK}[\rho] \le \lim_{\hbar\to 0} \Bigr(T_\hbar(\Psi)+V_{ee}(\Psi)\Bigr) = V_{ee}(\Psi) = C(P_{\epsilon,\beta}).
\end{equation}
Next, (\ref{varbd}) yields
\begin{equation} \label{limitbd1}
   C[P_{\epsilon,\beta}] \le C[\gammatildeepsilonbeta] + c_*\Bigl(||\rho||_{\LcapL} + ||\rho_\epsilon||_{\LcapL}\Bigr)
   ||\rho-\rho_\epsilon||_{\LcapL}.
\end{equation}
Next, we claim that
\begin{equation}\label{limitbd2}
   C[\gammatildeepsilonbeta] = (1-\beta)C[\gamma_\epsilon] + \beta C(\rho_\epsilon\otimes\rho_\epsilon)
   \le C[\gamma_\epsilon] + c_0\beta ||\rho_\epsilon||_{L^1}||\rho_\epsilon||_{\LcapL}.
\end{equation}
This is immediate from the estimate
$$
   |C[f\otimes g]|\le c_0||f||_{L^1}||g||_{\LcapL} \mbox{ for any }f,\,g\in \LcapL,
$$
which follows by applying (\ref{ineq}), multiplying by $f$, and integrating over $x$.

Finally, we will need the following bound which was established in Proposition \ref{P:smooth}:
\begin{equation} \label{limitbd3}
    C[\gamma_\epsilon]\le C[\gamma].
\end{equation}
Combining the estimates (\ref{limitbd0})--(\ref{limitbd3}) yields
$$
  \lim_{\hbar\to 0} F_{HK}[\rho] \le C[\gamma] + \frac{c_*}{3}
  \Bigl[ \beta ||\rho_\epsilon||_{L^1}||\rho_\epsilon||_{\LcapL} +
   (||\rho||_{\LcapL} + ||\rho_\epsilon||_{\LcapL}) ||\rho-\rho_\epsilon||_{\LcapL}\Bigr].
$$
Letting $\beta$ and $\epsilon$ tend to zero and using that $\rho_\epsilon$, being the mollification
$\phi_\epsilon * \rho$ of $\rho$, tends to $\rho$ in $\LcapL$ as $\epsilon\to 0$  yields
$$
   \lim_{\hbar\to 0} F_{HK}[\rho] \le C[\gamma] = E_{OT}[\rho].
$$
The reverse inequality is immediate from (\ref{Veelowbd}) and the positivity of $T_\hbar$.
This completes the proof of Theorem~\ref{limit}.

\renewcommand{\theequation}{A.\arabic{equation}}
\renewcommand{\thetheorem}{A.\arabic{theorem}}
\renewcommand{\thesection}{A}
\section{Appendix}

\begin{lemma}[Legendre transforms on the line]\label{A3}
Let $l:\R\rightarrow\R\cup\{+\infty\}$ be lower semi-continuous and convex. Define its dual function $l^{\circ}$ by (\ref{legr}). Then $l^{\circ}$ satisfies the same hypotheses as $l$, and
\begin{itemize}
\item [(a)] $(\lambda,\xi)\in\partial_\cdot l$ if and only if $(\xi,\lambda)\in\partial_\cdot l^{\circ}$;
\item [(b)] the dual function of $l^{\circ}$ is $l$, that is $l=l^{\circ\circ}$;
\item [(c)] strict convexity of $l$ implies $l^{\circ}$ differentiable, where it is subdifferentiable;
\item[(iv)] $l^{\circ}(\xi)$ is non-increasing if and only if $l(\lambda)=\infty$ for all $\lambda>0$.
\end{itemize}
\end{lemma}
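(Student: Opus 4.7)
The plan is to reduce every assertion in the lemma to a classical fact about the ordinary Legendre transform $k^*$ applied to the one-sided extension $k$ of $l$ appearing in the definition of $l^\circ$. Since $l$ is lsc and convex on $\R$ and the extension by $+\infty$ on $(-\infty,0)$ preserves both properties, $k$ is lsc and convex on $\R$, and the full machinery of classical convex duality is available: $k^*$ is lsc and convex; the Fenchel--Moreau biconjugation theorem yields $k^{**}=k$; the subdifferential duality $\xi\in\partial k(\lambda)\Leftrightarrow\lambda\in\partial k^*(\xi)$ holds; and strict convexity of $k$ on the relative interior of its effective domain implies (by the classical Asplund--Rockafellar theorem) that $k^*$ is differentiable on the interior of its own effective domain. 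Two direct observations round out the toolkit: since $k\equiv+\infty$ on $(-\infty,0)$, the supremum defining $k^*$ runs only over $\beta\ge 0$, so $k^*$ is non-decreasing on $(-\infty,0]$; and since $l\ge 0$, that same supremum is $+\infty$ for $s>0$, so $k^*\equiv+\infty$ on $(0,\infty)$.

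With these in hand, the preamble and each part become bookkeeping via the formula $l^\circ(\xi)=k^*(-|\xi|)$. Lower semicontinuity of $l^\circ$ is inherited from $k^*$, and convexity on $[0,\infty)$ follows by composing the convex $k^*$ with the affine map $\xi\mapsto-\xi$. For (b), I would substitute $s=-\beta$ in the supremum defining $l^{\circ\circ}$ to obtain
\begin{equation*}
  l^{\circ\circ}(\lambda)=\sup_{\beta\ge 0}\{-\lambda\beta-k^*(-\beta)\}=\sup_{s\le 0}\{\lambda s-k^*(s)\}=k^{**}(\lambda)=k(\lambda),
\end{equation*}
where the penultimate equality uses $k^*\equiv+\infty$ on $(0,\infty)$. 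For (a), when $\xi\in\partial_\cdot l(\lambda)$ with $\lambda>0$, the strict decrease of $l$ forces $\xi\le 0$, the local subgradient agrees with the classical convex one near an interior point of the effective domain of $k$, and Young--Fenchel duality for $k$ gives $\lambda\in\partial k^*(\xi)$; since $l^\circ$ agrees with $k^*$ on $(-\infty,0)$, this reads as $\lambda\in\partial_\cdot l^\circ(\xi)$, and the reverse direction is symmetric. For (c), strict convexity transfers from $l$ to $k$, hence to differentiability of $k^*$ on the interior of its domain, and the chain rule through $\xi\mapsto-|\xi|$, valid for $\xi\ne 0$, delivers differentiability of $l^\circ$ wherever it is subdifferentiable. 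For (d), evenness of $l^\circ$ forces any non-increasing $l^\circ$ to be constant, which happens precisely when $k^*$ is constant on $(-\infty,0]$; the sup defining $k^*$ can then only be realized at $\beta=0$, which is equivalent to $l\equiv+\infty$ on $(0,\infty)$.

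The main obstacle I anticipate is the sign and domain bookkeeping required to translate between $l^\circ$ (even, defined on all of $\R$) and $k^*$ (the one-sided Legendre transform), together with the singular behavior at $\xi=0$: $l^\circ$ will generically exhibit a cusp there, as illustrated by the Coulomb model $l(\lambda)=1/\lambda$ yielding $l^\circ(\xi)=-2\sqrt{|\xi|}$, and will therefore fail to be globally convex on $\R$. Accordingly, I would interpret the convexity and biconjugation claims on the effective domain $[0,\infty)$ and excise $\xi=0$ from the subdifferential correspondence, which matches how the lemma is subsequently invoked.
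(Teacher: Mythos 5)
Your route is in substance the same as the paper's: the paper disposes of (a)--(c) by citing Theorem A.1 of \cite{GM96}, which is precisely the classical one-dimensional Legendre duality (Fenchel--Moreau biconjugation, inversion of subdifferentials, strict convexity of $l$ implying differentiability of the conjugate) that you reconstruct for the one-sided extension $k$. Your bookkeeping through $l^\circ(\xi)=k^*(-|\xi|)$, the excision of $\xi=0$, and the restriction of the convexity/biconjugation claims to $[0,\infty)$ are all consistent with how the lemma is actually invoked later (always at $\lambda=|x-y|>0$ and $\xi=l'(\lambda)<0$). For part (d) the paper argues differently: it does not use evenness, but rather runs the subdifferential correspondence (a) in both directions --- if $l(\lambda_0)<\infty$ for some $\lambda_0>0$, pick $\xi\in\partial_\cdot l(\lambda_0)$ and conclude $l^\circ(\xi+\epsilon)\ge l^\circ(\xi)+\lambda_0\epsilon$, so $l^\circ$ increases; and conversely an increase of $l^\circ$ produces $(\xi,\lambda)\in\partial_\cdot l^\circ$ with $\lambda>0$, hence $l(\lambda)<\infty$. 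This is tighter than your converse step, where ``the sup defining $k^*$ can then only be realized at $\beta=0$'' is not quite an argument: the supremum need not be attained, and constancy of $k^*$ on $(-\infty,0]$ does not by itself locate where it is realized. You should replace that step by the subdifferential argument above.

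There is one genuinely incorrect justification: the claim that ``since $l\ge 0$'' the supremum $\sup_{\beta\ge0}\{\beta s-k(\beta)\}$ equals $+\infty$ for $s>0$. Positivity of $l$ gives an upper bound $\beta s-k(\beta)\le\beta s$, not a lower bound, and the claim is false for, say, $l(\beta)=\beta^2$, for which $k^*(s)=(s_+)^2/4$ is finite everywhere; in that case your identity $\sup_{s\le0}\{\lambda s-k^*(s)\}=k^{**}(\lambda)$ in (b) also fails. What makes $k^*\equiv+\infty$ on $(0,\infty)$ is that $l$ is non-increasing and finite somewhere on $(0,\infty)$ (so $\beta s-k(\beta)\ge\beta s-l(\beta_0)\to+\infty$ for $\beta\ge\beta_0$). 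This monotonicity is not stated in the lemma's hypotheses but is implicitly assumed throughout (it is hypothesis (A1) wherever the lemma is applied, and without it parts (a), (b) and (d) are all false as stated, again by the example $l(\beta)=\beta^2$). So your proof is correct once ``$l\ge0$'' is replaced by ``$l$ non-increasing'', and you should state that monotonicity as a standing assumption rather than treating it as dispensable.
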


\begin{proof}
(a)-(c) follow from the corresponding statements in Theorem A.1 in \cite{GM96}.
Assertion (d) is easily proved similarly to Theorem A.3 (iv) in \cite{GM96}.
To verify the {\it only if} implication, suppose that $l(\lambda)$ is finite at some $\lambda>0$; we shall show that $l^{\circ}$ increases somewhere. Being convex, $l$ must be subdifferentiable at $\lambda$ (or some nearby point): $(\lambda,\xi)\in\partial_\cdot l$. Then (i) implies that $l^{\circ}$ is finite at $\xi$ and increasing: $l^{\circ}(\xi+\epsilon)\ge l^{\circ}(\xi)+\lambda\epsilon$ for some $\eps>0$.

To prove the converse, suppose that $l^{\circ}$ increases somewhere. Then one has $(\xi,\lambda)\in\partial_\cdot l^{\circ}$ for some $\xi\in\R$ and $\lambda>0$. Invoking once again (i) gives $(\lambda,\xi)\in\partial_\cdot l$, from which one concludes finiteness of $l(\lambda)$.
\end{proof}

For $x\in\bbr^d\setminus\{0\}$, denote by $\hat x:=x/|x|$ the unit vector in direction of $x$.

\begin{lemma}[subdifferentiability of the cost]
\label{A4'}
 Let $l:\R\to\R\cup\{+\infty\}$ be convex and non-increasing on $[0,\infty)$ and define $h(x):=l(|x|)$ on $\R^d$. Unless $h$ is a constant: $(x,y)\in\partial_\cdot h$ if and only if $(|x|,-|y|)\in\partial_\cdot l$ with $y=|y|\hat{x}$ and $x\neq 0$.
\end{lemma}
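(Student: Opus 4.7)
The plan is to exploit the rotational symmetry of $h(x)=l(|x|)$ to reduce the multidimensional subdifferentiability to that of the scalar $l$. For the non-trivial (forward) direction, starting from $(x,y)\in\partial_\cdot h$, I would proceed in three reductions: rule out $x=0$, force $y$ to be radial, and then identify the scalar via a purely one-dimensional inequality.

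To rule out $x=0$, suppose $(0,y)\in\partial_\cdot h$, which by the definition of $\partial_\cdot$ forces $h(0)<\infty$. Testing the subdifferential inequality at $z$ and at $-z$ and adding, while using the radial invariance $h(z)=h(-z)$, yields $h(z)-h(0)\ge o(|z|)$, i.e.\ $l(r)\ge l(0)+o(r)$ as $r\to 0^+$. Since $l$ is non-increasing, this forces the right derivative $l'_+(0)=0$, hence by convexity $l'\ge 0$ on $(0,\infty)$; combined again with $l$ non-increasing, $l$ must be constant, contradicting the hypothesis that $h$ is not constant.

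To force $y$ radial, pick any unit vector $v\perp\hat{x}$. Pythagoras gives $|x+tv|=|x|+O(t^2)$ as $t\to 0$. Since $|x|>0$ lies in the interior of the effective domain of the convex function $l$, $l$ is locally Lipschitz near $|x|$, so $h(x+tv)-h(x)=O(t^2)$. The subdifferential inequality then gives $t\langle v,y\rangle\le O(t^2)+o(t)$; applying it with $\pm t$ yields $\langle v,y\rangle=0$, so $y=\alpha\hat{x}$ for some $\alpha\in\R$. Testing next with purely radial $z=t\hat{x}$ for $|t|<|x|$ gives $|x+t\hat{x}|=|x|+t$, and the subdifferential inequality reduces exactly to $l(|x|+t)\ge l(|x|)+\alpha t+o(t)$, i.e.\ $\alpha\in\partial_\cdot l(|x|)$. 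Since $l$ is non-increasing, $\alpha\le 0$, so $\alpha=-|\alpha|=-|y|$, producing both $(|x|,-|y|)\in\partial_\cdot l$ and the stated radial form of $y$.

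The reverse implication is a direct calculation: given $(|x|,-|y|)\in\partial_\cdot l$ and $y$ radial with the appropriate sign, combine the scalar subgradient inequality $l(|x|+s)\ge l(|x|)-|y|s+o(s)$ with the Taylor expansion $|x+z|=|x|+\langle\hat{x},z\rangle+O(|z|^2/|x|)$ to recover $h(x+z)\ge h(x)+\langle z,y\rangle+o(|z|)$. The main obstacle I expect is the ruling-out of $x=0$: this is where all three hypotheses—convexity, non-increasingness, and non-constancy—must conspire to exclude a degenerate subgradient at the origin. The other two steps are essentially Taylor expansion combined with one-dimensional convex analysis.
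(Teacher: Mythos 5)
Your argument is correct and follows the same basic strategy as the paper's proof: reduce everything to one--dimensional convex analysis along the ray through $x$ via the expansion of $|x+v|$, and kill the tangential component of $y$ by symmetry. Three remarks on the details. First, where you rule out a tangential component by a perpendicular perturbation $x+tv$ and the bound $|x+tv|=|x|+O(t^2)$, the paper instead rotates $x$ exactly, so that $h(x+v)=h(x)$ holds identically and no regularity of $l$ is needed; your version implicitly requires a one-sided Lipschitz bound $l(|x|)-l(|x|+s)\le Cs$ for small $s\ge 0$. This does hold (convexity plus the finiteness of $l'_+(|x|)$, which is forced by the radial subgradient inequality), but your stated justification --- that $|x|$ is interior to the effective domain of $l$ --- is not guaranteed by the hypotheses, since $l$ could equal $+\infty$ on $[0,|x|)$ while still being non-increasing and finite at $|x|$; fortunately the perpendicular perturbation only probes $l$ at arguments $\ge|x|$, so the step survives. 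Second, your exclusion of $x=0$ is actually more careful than the paper's: the paper simply invokes $h(0)=+\infty$ (true for the Coulomb cost, but not implied by the lemma's hypotheses as stated), whereas your symmetrization $l(r)\ge l(0)+o(r)$ combined with monotonicity and convexity forces $l$ to be constant, and so covers the case $h(0)<\infty$ as well. Third, both you and the paper's own proof conclude $y=-|y|\hat{x}$, i.e.\ the subgradient points toward the origin because $l$ is non-increasing; the formula ``$y=|y|\hat{x}$'' in the printed statement is a sign typo, so your final sentence claiming to recover ``the stated radial form'' should read $y=-|y|\hat{x}$.
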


\begin{proof}
Fix $x\in\R^d\setminus\{0\}$ and suppose $l(\lambda)$ admits $\xi$ as a subderivative at $|x|:(|x|,\xi)\in\partial_\cdot l$. Since $l$ is convex and non-increasing, $\xi\le 0$, while for $\epsilon\in\R$,
\begin{equation}
\label{29'}
l(|x|+\epsilon)\ge l(|x|)+\epsilon\xi.
\end{equation}
Let
\begin{equation}
\label{32}
\epsilon:=|x+v|-|x|=\sqrt{|x|^2+2<x,v>+|v|^2}-|x| \, \le \, <\hat{x},v>+\frac{v^2}{2|x|},
\end{equation}
which inequality follows from $\sqrt{1+\lambda}\le 1+\frac{\lambda}{2}$.
Now $h(x+v)=l(|x+v|)\ge l(|x|+\epsilon)$, with $\epsilon=<v,\hat{x}>+o(|v|)$, as seen from (\ref{32}).
It follows immediately from Definition~\ref{3.2}(1) that $h$ is subdifferentiable at $x$, with $(x,\xi\hat{x})\in\partial_\cdot h$. On the other hand, $h$ cannot be subdifferentiable at the origin as $h(0)=\infty$.

Now let $(x,y)\in\partial_\cdot h$, so $x\neq 0$ and for small $v\in\R^d$
$$h(x+v)\ge h(x)+<v,y>+o(|v|).$$
Spherical symmetry of $h$ forces $y$ to be parallel to $x$: otherwise, a slight rotation $x+v:=x\cos\theta-\hat{z}\sin\theta$ of $x$ in the direction $z:=y-(<y,\hat{x}>)\hat{x}$ would contradict $h(x+v)=h(x)$ for $\theta$ sufficiently small. Moreover, taking $v:=\epsilon\hat{x}$ yields (\ref{29'}) with $\xi:=<\hat{x},y>+o(1)$, which concludes the lemma: $|y|=\pm<\hat{x},y>$ holds with a minus sign since $l$ cannot increase.
\end{proof}

\begin{lemma}[uniform subdifferentiability of the cost]
\label{A5}
Let $l$ and $h$ be defined as in the lemma above. Then $h$ is subdifferentiable on $\R^d\setminus\{0\}$. Moreover, for $\delta>0$, there is a real function $O_{\delta}(\lambda)$ tending to zero linearly with $|\lambda|$ such that $|x|>\delta, y\in\partial_\cdot h(x)$ and $v\in \R^d$ imply
\begin{equation}
\label{30}
h(x+v)\ge h(x)+<v,y>+O_{\delta}(v^2).
\end{equation}
\end{lemma}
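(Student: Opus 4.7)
The plan is to build on the preceding Lemma~\ref{A4'}, which reduces the subdifferential of $h$ at a nonzero point $x$ to that of $l$ at $|x|$: if $(x,y)\in\partial_\cdot h$ then $y=\xi\hat x$ with $(|x|,\xi)\in\partial_\cdot l$ and $\xi\le 0$. Subdifferentiability of $h$ on $\R^d\setminus\{0\}$ is then immediate, since any proper lower semi-continuous convex function on $\R$ is subdifferentiable at every interior point of its effective domain, and $l$ is finite and convex on $(0,\infty)$; Lemma~\ref{A4'} transports the subgradient to $h$.

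For the quantitative inequality, the key algebraic input is a two-sided comparison between $\epsilon:=|x+v|-|x|$ and $<\hat x,v>$. The upper bound
\[
\epsilon\le<\hat x,v>+\frac{|v|^2}{2|x|}
\]
already appears in the proof of Lemma~\ref{A4'} and follows from $\sqrt{1+\lambda}\le 1+\lambda/2$. For the matching lower bound I would write $|x+v|^2=(|x|+<\hat x,v>)^2+(|v|^2-<\hat x,v>^2)$ and invoke Cauchy--Schwarz on the second bracket to conclude $|x+v|\ge\bigl||x|+<\hat x,v>\bigr|\ge|x|+<\hat x,v>$, whence $\epsilon\ge<\hat x,v>$. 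Combining these bounds gives, uniformly for $|x|>\delta$,
\[
0\le\epsilon-<\hat x,v>\le\frac{|v|^2}{2|x|}\le\frac{|v|^2}{2\delta}.
\]

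The rest of the proof is substitution. From $(|x|,\xi)\in\partial_\cdot l$ one has $l(|x|+\epsilon)\ge l(|x|)+\xi\epsilon$ for every admissible $\epsilon$ (note $|x|+\epsilon=|x+v|\ge 0$), so using $y=\xi\hat x$,
\[
h(x+v)=l(|x+v|)\ge h(x)+\xi\epsilon=h(x)+<v,y>+\xi\bigl(\epsilon-<\hat x,v>\bigr).
\]
Since $\xi\le 0$ and $\epsilon-<\hat x,v>\ge 0$, the last term is non-positive, with modulus at most $|\xi|\,|v|^2/(2\delta)$. The only step requiring a word of care, and the place where the hypothesis $|x|>\delta$ is truly used, is a uniform bound on $|\xi|$: here I would invoke the monotonicity of $\partial_\cdot l$ for a convex $l$, so that $\xi\in[l'_-(|x|),l'_+(|x|)]$ is sandwiched between non-decreasing quantities, yielding $|\xi|\le|l'_+(\delta)|=:M_\delta<\infty$ for all $|x|>\delta$. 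Setting $O_\delta(|v|^2):=-(M_\delta/(2\delta))\,|v|^2$, which vanishes linearly in $|v|^2$, produces the asserted inequality uniformly in $x$ with $|x|>\delta$, in $v\in\R^d$, and in $y\in\partial_\cdot h(x)$. The main (and rather mild) obstacle is this uniform control on the size of the subgradient; everything else reduces to the two-sided expansion of $|x+v|$.
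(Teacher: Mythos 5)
Your proposal is correct and follows essentially the same route as the paper's proof: subdifferentiability of $h$ off the origin via Lemma~\ref{A4'}, the expansion $\epsilon=|x+v|-|x|\le\ <\hat x,v>+|v|^2/(2|x|)$, the sign $\xi\le 0$, and the monotonicity of the right derivative of the convex function $l$ to bound $\xi$ uniformly below by $l'(\delta^+)$ on $\{|x|>\delta\}$. The extra lower bound $\epsilon\ge\ <\hat x,v>$ you include is harmless but not needed, since $\xi\epsilon\ge\xi\bigl(<\hat x,v>+|v|^2/(2|x|)\bigr)$ already follows from the upper bound on $\epsilon$ together with $\xi\le 0$.
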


\begin{proof}
For $\lambda>0$, the convex function $l$ admits a subgradient $\xi\in\partial_\cdot l(\lambda)$: for example, take its right derivative $\xi=l'(\lambda^+)$. If $|x|=\lambda$, the lemma implies $(x,\xi\hat{x})\in\partial_\cdot h$, so $h(x)$ is subdifferentiable at $x$.

Now suppose that $(x,y)\in\partial_\cdot h$. The opposite implication of the lemma yields  $y=´\xi\hat{x}$ with $(|x|,\xi)\in\partial_\cdot l$ so (\ref{29}) holds. Morover, $\xi\le 0$. If $v\in\R^d$, then $h(x+v)\ge l(|x|+\epsilon)$ where $\epsilon$ is as in (\ref{32}).
By convexity of $l$, its right derivative is a non-decreasing function of $\lambda$. Asssume $|x|>\delta$ so that $\xi\ge l'(\delta^+)$. Together with (\ref{29'}) and (\ref{32}), this assumption gives
$$h(x+v)\ge h(x)+<\xi\hat{x},v>+v^2l'(\delta^+)/{2\delta}.$$
\end{proof}

\begin{lemma}
\label{A6}
Let $l$ and $h$ be defined as in the Lemma \ref{A4'}. Define the dual function $h^*:\R^d\rightarrow\R\cup\{+\infty\}$ via (\ref{legr}). The for some $R\ge 0$,
\begin{enumerate}
\item [(i)] $h^*(y)$ is continuously differentiable on $|y|>R$ while $h^*=+\infty$ on $|y|<R$;
\item [(ii)] $(y,x)\in\partial_\cdot h^*$ with $x\neq 0$ if and only if $(x,y)\in\partial_\cdot h$ with $y\neq 0$;
\item [(iii)] if $(y,x)\in\partial_\cdot h^*$, then $x=\nabla h^*(y)$.
\end{enumerate}
\end{lemma}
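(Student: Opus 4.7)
The proof is essentially a bookkeeping exercise that reduces each claim to a statement about the one-dimensional function $l^\circ$ and then applies the chain rule. The key observation is that, by the definition of the generalized Legendre transform, $h^*(y) = l^\circ(|y|)$, so smoothness and subdifferentiability of $h^*$ are controlled by the corresponding properties of $l^\circ$ at $|y|$ together with the smoothness of $y\mapsto|y|$ away from the origin.

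For (i), the plan is to note first that $l^\circ$ is convex and non-increasing on $[0,\infty)$ (directly from its definition as a supremum of affine functions of $|\lambda|$), so its effective domain is an interval $[R,\infty)$ for some $R\ge 0$, forcing $h^* = +\infty$ on $\{|y|<R\}$. On the open interior $(R,\infty)$, $l^\circ$ is finite and convex, hence subdifferentiable; strict convexity of $l$ (from (A1)) together with Lemma~\ref{A3}(c) upgrades this subdifferentiability to differentiability, and continuity of $(l^\circ)'$ on $(R,\infty)$ is then the standard fact about one-sided derivatives of convex functions. Since $|y|>R$ forces $y\ne 0$, the map $|\cdot|$ is smooth at $y$, and the chain rule yields that $h^*$ is continuously differentiable there with $\nabla h^*(y) = (l^\circ)'(|y|)\,\hat y$.

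For (ii), the strategy is to shuttle between $\partial_\cdot h$ and $\partial_\cdot h^*$ via the radial reduction of Lemma~\ref{A4'}, connecting them through the Legendre duality in Lemma~\ref{A3}(a). Starting from $(x,y)\in\partial_\cdot h$ with $y\ne 0$, Lemma~\ref{A4'} yields $(|x|,-|y|)\in\partial_\cdot l$ together with the direction constraint that $y$ is parallel to $\hat x$ (with the sign forced by non-increasingness of $l$, as the proof of Lemma~\ref{A4'} establishes). Lemma~\ref{A3}(a), combined with the evenness of $l^\circ$, then translates this into $(|y|,-|x|)\in\partial_\cdot l^\circ$; applying Lemma~\ref{A4'} in the reverse direction to $h^*$ and $l^\circ$ produces $(y,x)\in\partial_\cdot h^*$, with the required direction constraint $x\parallel\hat y$ already in hand from the previous step. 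Reversing every arrow yields the converse implication.

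For (iii), let $(y,x)\in\partial_\cdot h^*$. Lemma~\ref{A4'} applied to $h^*$ produces $(|y|,-|x|)\in\partial_\cdot l^\circ$ together with $y\ne 0$, and strict convexity of $l$ together with Lemma~\ref{A3}(c) forces $l^\circ$ to be differentiable at $|y|$ with derivative $-|x|$. The chain rule then makes $h^*$ differentiable at $y$ with $\nabla h^*(y) = -|x|\hat y$, which the direction constraint from (ii) identifies with $x$; the degenerate case $x=0$ is handled verbatim with derivative $0$. The only delicate bookkeeping, and the place where I expect the main risk of slips, lies in tracking signs in the direction constraints of Lemma~\ref{A4'} and in the passage between subgradients of $l^\circ$ at $|y|$ and at $-|y|$ via evenness; once these are sorted out, every step is mechanical.
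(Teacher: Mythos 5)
Your proposal is correct and follows essentially the argument the paper itself points to: the paper omits the proof of Lemma~\ref{A6} and defers to Proposition A.6 of \cite{GM96}, whose proof is exactly this reduction to the one-dimensional dual via $h^*(y)=l^\circ(|y|)$, combined with Lemma~\ref{A3} (one-dimensional Legendre duality), Lemma~\ref{A4'} (radial reduction of subdifferentials), and the chain rule. The one sign you flag as delicate is indeed the only trap --- the direction constraint in Lemma~\ref{A4'} should read $y=-|y|\hat{x}$ (as its proof, not its statement, makes clear), and your computation $\nabla h^*(y)=-|x|\hat{y}=x$ in (iii) is consistent with that convention.
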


\begin{proof}
The proof follows the same reasoning as the proof of Proposition A.6 (i)-(iii) from \cite{GM96} and will be omitted.
\end{proof}

\subsubsection*{Acknowledgements}
We thank Robert McCann for many useful explanations and suggestions regarding optimal transport techniques and literature.


\begin{thebibliography}{99}

\bibitem[Be93]{Be93} {\sc A. Becke.} Density-functional thermochemistry. III. The role of exact exchange. {\it J. Chem. Phys.} {\bf 98}, 5648 (1993)

\bibitem[CFK11]{CFK11} {\sc C. Cotar, G. Friesecke, C. Kl\"uppelberg.} In preparation.

\bibitem[CY02] {CY02} {\sc A. J. Coleman, V. I Yukalov.} Reduced density matrices. {\it Springer Lecture Notes in Chemistry} Vol.~72 (2002)

\bibitem[Di30]{Di30} {\sc P. A. M. Dirac.} Note on exchange phenomena in the Thomas atom.
    {\it Proc. Cambridge Philosophical Society} {\bf 26}, 376-385 (1930)

\bibitem[FNM03]{FNM03} {\sc C. Fiolhais, F. Noqueira, M. Marques (eds).} A Primer in Density Functional Theory. {\it Springer Lecture Notes in Physics} Vol. 620 (2003)

\bibitem[Fr97]{Fr97} {\sc G. Friesecke.} Pair correlations and exchange phenomena in the free electron gas, {\it Comm. Math. Phys.} {\bf 184}, 143-171 (1997)

\bibitem[Fr03]{Fr03} {\sc G. Friesecke.} The multiconfiguration equations for atoms and molecules: charge quantization and existence of solutions.
    {\it Arch. Rat. Mech. Analysis} {\bf 169}, 35-71 (2003)

\bibitem[GO07]{GO07}{\sc W. Gangbo, V. Oliker.} Existence of optimal maps in the reflector-type problems, preprint.

\bibitem[GM95]{GM95} {\sc W. Gangbo, R. McCann.} Optimal maps in Monge's mass transport problem, {\it C.R. Acad. Sci. Paris. Ser. I. Math.} {\bf 325}, 1653-1658 (1995)

\bibitem[GM96]{GM96} {\sc W. Gangbo, R. McCann.} The geometry of optimal transportation, {\it Acta Math.} {\bf 177}, 113-161 (1996)

\bibitem[GD95]{GD95} {\sc E. K. U. Gross \& R. M. Dreizler (eds.)} {\it Density Functional Theory.} Plenum Press (1995)

\bibitem[HK64]{HK64} {\sc P. Hohenberg, W. Kohn.} Inhomogeneous electron gas,
    {\it Phys. Rev. B} {\bf 136}, 864-871 (1964)

\bibitem[KM07]{KM07} {\sc Y.-H. Kim, R. McCann.} Continuity, curvature, and the general covariance of optimal transportation, {\it preprint}

\bibitem[Ko95]{Ko95} {\sc W. Kohn.} Overview of density functional theory. In [GD95], 3-10, 1995

\bibitem[KS65]{KS65} {\sc W. Kohn,  L. J. Sham.} Self-consistent equations including exchange and correlation effects.
    {\it Phys. Rev. A} {\bf 140}, 1133-1138 (1965)

\bibitem[Le79]{Le79} {\sc M. Levy.} Universal variational functionals of electron densities, first-order density matrices, and natural spin-orbitals and solution of the v-representability problem. {\it Proc. Natl. Acad. Sci. USA} {\bf 76}(12), 6062-6065 (1979)

\bibitem[Li83]{Li83}{\sc E.H. Lieb.} Density functionals for Coulomb systems, {\it International Journal of Quantum Chemistry} {\bf 24}, 243-277 (1983)

\bibitem[LYP88]{LYP88} {\sc C. Lee, W. Yang, R. G. Parr.}
Development of the Colle-Salvetti
correlation-energy formula into a functional of the electron density,
{\it Phys. Rev. B} {\bf 37}, 785-789 (1988)

\bibitem[PY95]{PY95} {\sc R. G. Parr, W. Yang.} {\it Density-Functional Theory of Atoms and Molecules.} Oxford University Press, Oxford (1995)

\bibitem[Ra09]{Ra09} {\sc D. Rappoport, N. R. M. Crawford, F. Furche, K. Burke.} Which density functional should I choose?
In: {\it Computational Inorganic and Bioinorganic Chemistry}, eds. E. I. Solomon, R. B. King, and R. A. Scott. Wiley (2009)

\bibitem[Ro72]{Ro72} {\sc R. T. Rockafellar.} {\it Convex Analysis.}
    Princeton University Press, Princeton, (1970)

\bibitem[Ru96]{Ru96}{\sc L. R\"uschendorf} On c-optimal random variables. {\it Stat. Prob. Letter} {\bf 27}, 267-270 (1996).

\bibitem[SK92]{SK92}{\sc C. Smith, M. Knott.} On Hoeffding-Fr\'echet bounds and cyclic monotone relations, {\it J. Mult. Anal.} {\bf 40}, 328-334 (1992).

\bibitem[Vill09]{Villani}{\sc C. Villani} {\it Optimal Transport: Old and New.} Springer, Heidelberg (2009).

\end{thebibliography}
\end{document}